\renewcommand{\textcolor}[2]{#2}
\theoremstyle{theorem}
\newtheorem{definition}{Definition}[section]
\theoremstyle{theorem}
\theoremstyle{theorem}
\newtheorem{theorem}[definition]{Theorem}
\theoremstyle{theorem}
\newtheorem{lemma}[definition]{Lemma}
\theoremstyle{remark}
\theoremstyle{theorem}
\newtheorem{corollary}[definition]{Corollary}
\theoremstyle{theorem}
\newtheorem{proposition}[definition]{Proposition}
\theoremstyle{theorem}
\numberwithin{equation}{section}
\def\XXint#1#2#3{{\setbox0=\hbox{$#1{#2#3}{\int}$}
      \vcenter{\hbox{$#2#3$}}\kern-.5\wd0}}
\definecolor{ao}{rgb}{0.0, 0.5, 0.0}
\title[Effective theories]{Effective theories for
incompressible magnetoelastic
shallow shells.}
\date{\today}
\author{
  Emanuele Tasso \and Tobias Unterberger}
\begin{document}
	
	\begin{abstract}
	   We characterize the asymptotic behaviour, in the sense of $\Gamma$-convergence, of a thin magnetoelastic shallow shell. The compactness is achieved up to rigid motions. For deformations, it relies on an approximation by rigid movements, whereas for magnetizations it is based on a careful consideration of the geometry of the deformed domain. 
       
       The result is obtained by a combination of variational methods ($\Gamma$-convergence) with degree theory, fixed-point and geometrical arguments. The proof strategy relies on an adaptation of an analogous result for incompressible magnetoelastic plates from M. Bresciani \cite{BrescianiIncompressible} and an application of results by I.Velcic on elastic shallow shells \cite{Velcic}.
  
		\medskip
  
		\noindent
		{\it 2020 Mathematics Subject Classification:}

		\smallskip
		\noindent
		{\it Keywords and phrases: $\Gamma$-convergence, Magnetoelasticity, Dimension reduction}

	\end{abstract}
	
	\maketitle
	
%%%%%%%%%%%%%%%%%%%%%%%%%%%%%%%%%%%%%%%%%%
 
\section{Introduction}
\label{s:1}
While the study of thin structures in elasticity by $\Gamma$-convergence is by now a classical result (see, e.g., \cite{dav}); reduced theories in magnetoelasticity are much harder to identify owing to the mixed Euler-Lagrangian structure of the energy. In particular, the only available contributions in the large-strain setting are those by M.Bresciani in \cite{BrescianiIncompressible, BreKru21, BreDavKru21}.\\
In this paper we extend the analysis performed in \cite{BrescianiIncompressible} for magnetoelastic plates to the setting of magnetoelastic shallow shells. The latter ones are two-dimensional surfaces with a tubular neighbourhood where the curvature of the mean surface is small with respect to the size of the mean surface. 

To be precise, consider a cylindrical set $\Omega = \omega \times (-\frac{1}{2},\frac{1}{2})$ with $\omega \subseteq \mathbb{R}^2$. An associated plate of height $h > 0$ would be given by $\Omega_h = \omega \times (-\frac{h}{2},\frac{h}{2})$, whereas the corresponding shallow shell $\hat{\Omega}_h$ with thickness $h > 0$ is defined through the map
\begin{equation*}
    (x', x_3) \mapsto (x', h\theta(x')) + hx_3n_h(x') \ \text{for} \ (x',x_3) \in \Omega,
\end{equation*}
where $\theta$ is a smooth function and $n_h$ is the unit normal vector to the midsurface of $\hat{\Omega}_h$.

Its associated energy $\mathcal{E}_h(u_h,m_h)$ then depends on the elastic deformation $u_h: \hat{\Omega}_h \to \mathbb{R}^3$ and on the magnetization $m_h:u_h(\hat{\Omega}_h) \to \mathbb{S}^2$, and is given by
\begin{equation}
    \begin{aligned}
        \mathcal{E}_h(u_h,m_h) := \ &\frac{1}{h^\beta}\int_{\hat{\Omega}_h}W^{inc}(\nabla u_h, m_h\circ u_h) \ dX\\
        + \ &\alpha\int_{u_h(\hat{\Omega}_h)}|\nabla m_h|^2 \ d\xi + \frac{1}{2}\int_{\mathbb{R}^3}|\nabla \psi_{m_h}|^2 \ d\xi.
    \end{aligned}
    \label{1.1}
\end{equation}

The first term describes the \textit{elastic energy}. We consider deformations $u_h$ in the Sobolev-space $W^{1,p}(\hat{\Omega}_h;\mathbb{R}^3)$ and assume $\beta > 2p$, as well as $p > 3$ to ensure that every deformation $u_h$ admits a continuous representative and that the deformed set $u_h(\hat{\Omega}_h)$ is defined in a meaningful way. The incompressible elastic energy density $W^{inc}:\mathbb{R}^{3\times 3}\times \mathbb{S}^2 \to \mathbb{R} $, is defined as $W^{inc}(F,\nu) = W(F,\nu)$ if $\det(F) = 1$ and $W^{inc}(F,\nu) = + \infty$ otherwise, where the nonlinear elastic energy density $W:\mathbb{R}^{3\times 3}\times \mathbb{S}^2 \to \mathbb{R}$ is frame-indifferent, normalized and satisfies suitable growth and regularity assumptions, which will be specified in Section \ref{s:3}. Note that we restrict our analysis to incompressible deformations, i.e. such that $\det \nabla u_h \equiv 1$ in $\hat{\Omega}_h$.

The second term in \eqref{1.1} models the \textit{exchange energy}, with the exchange constant $\alpha > 0$. It arises from the pairwise interaction of magnetic dipoles and favours their alignment. Magnetizations $m_h$ are Sobolev maps $W^{1,2}$, which take values in the unit sphere $\mathbb{S}^2$ due to a necessary modelling assumption called the \textit{magnetic saturation constraint}.  However, since $u_h$ is not a homeomorphism, $u_h(\hat{\Omega}_h)$ is not necessarily open. We therefore replace it with a suitable open set $\hat{\Omega}_h^{u_h}$ specified in Section \ref{s:3} and assume $m_h \in W^{1,2}(\hat{\Omega}_h^{u_h};\mathbb{S}^2)$.

The third term in \eqref{1.1} is called the \textit{magnetostatic energy}, where $\psi_{m_h}:\mathbb{R}^3 \to \mathbb{R}$ represents the stray field potential, defined as the weak solution to the following Maxwell equation:
\begin{equation*}
    \Delta \psi_{m_h} = \text{div}(\chi_{\hat{\Omega}_h^{u_h}}m_h) \ \text{in} \ \mathbb{R}^3.
\end{equation*}

The main contribution of this work is the characterization of the limiting behaviour of the rescaled magnetoelastic energy $h^{-1}\mathcal{E}_h$. We present below a simplified statement and refer to Theorems \ref{thm1} and \ref{thm2} for a precise formulation.

\textbf{Theorem.}\textit{
    The asymptotic behaviour of the functionals $(h^{-1}\mathcal{E}_h)$, as $h \to 0^+$, is described, in the sense of $\Gamma$-convergence, by the functional:
    \begin{equation}
        \begin{aligned}
            E(u,v,\lambda) := \ &\frac{1}{2}\int_\omega Q_2^{inc}(\text{sym}(\nabla' u + \nabla'v \otimes \nabla'\theta), \lambda) \ dx' + \frac{1}{24}\int_\omega Q_2^{inc}((\nabla')^2v,\lambda) \ dx'\\
            + \ &\alpha\int_\omega|\nabla'\lambda|^2 \ dx' + \frac{1}{2}\int_\omega|(\lambda)^3|^2 \ dx',
        \end{aligned}
        \label{1.3}
    \end{equation}
    defined for $u \in W^{1,2}(\omega;\mathbb{R}^2)$, $v \in W^{2,2}(\omega)$ and $\lambda \in  W^{1,2}(\omega;\mathbb{S}^2)$. 
}

Given $\nu \in \mathbb{S}^2$, the function $Q_2^{inc}$ is a quadratic form on $\mathbb{R}^{2\times 2}$ constructed from the second-order approximation of $W(.,\nu)$ close to the identity. $\nabla'$ denotes the gradient with respect to the variable $x' \in \omega$ and $(\lambda)^3$ is the third component of $\lambda$. The maps $u$ and $v$ denote in-plane and out-of-plane displacements, respectively, whereas $\lambda$ is the limiting magnetization. Note that the key difference from the limiting functional for incompressible magnetoelastic plates in \cite{BrescianiIncompressible} is the dependence of $Q_2^{inc}$ on $\nabla'v \otimes \nabla'\theta$, thus keeping track of the original curved geometry.

As usual for $\Gamma$-convergence, the proof of the main result can be split into three parts. We prove

\begin{enumerate}
    \item \textbf{Compactness:} Compactness of sequences of states with equibounded energies.
    \item \textbf{Liminf inequality:} We show an explicit bound from below for the asymptotic behaviour of the energies on pairs $(u_h,m_h)$ with equibounded energies.
    \item \textbf{Recovery sequence:} We prove that the lower bound in (2) is optimal by constructing recovery sequences so that such limiting values are attained.
\end{enumerate}

The compactness and the liminf inequality are summarized in Theorem \ref{thm1}. To prove the compactness, we first derive an approximation by rotations for the gradient of deformations $\nabla u_h$ in Section \ref{s:4}. For this, starting from the seminal rigidity estimate by G.Friesecke, R.James, S.Müller \cite{OGrigidity}, we provide an approximation of $\nabla u_h$ by Sobolev maps $R_h$ taking values in the set of proper rotations $SO(3)$. The basic case with quadratic growth was studied in \cite{Velcic, shells} (I.Velcic, M.Mora et al.). We adapt these results to suit the different growth assumption for magnetoelastic materials (see \ref{W3}). We conclude the section by proving compactness of deformations $u_h$ and magnetizations $m_h$. For this proof, the key observation is that the mapping from the plate to the shallow shell $\Omega_h \to \hat{\Omega}_h$ is a $C^1$-diffeomorphism, as shown in \cite{Velcic}. This allows to deduce the compactness in the setting of shallow shells directly from that of magnetoelastic plates. The argument for the elastic energy in the setting of magnetoelastic plates is based on an adaptation of standard techniques of dimension reduction in nonlinear elasticity \cite{FJM-hierachy} to the different growth assumption. For the compactness of magnetizations on plates, a novel approach based on careful considerations of the geometry of the deformed sets is applied. Using a uniform convergence estimate of the deformations $u_h$ together with elementary properties of the topological degree, it is shown that the deformed sets $u_h(\Omega_h)$ contain a cylinder with a height of order $h$. On this cylinder, standard methods are applied to deduce compactness. To conclude, it is shown that the locally identifed convergence is globally well-defined. For details we refer to \cite[Proposition 4.2]{BrescianiIncompressible}.

The liminf inequality is proven in Section \ref{s:7}. Similarily to the compactness, it is directly deduced from the analogous result on magnetoelastic plates. As mentioned in \cite{BrescianiIncompressible}, the proof for the elastic energy is adapted from \cite{FJM-hierachy}, accounting for the incompressibility by deploying the same strategy as in \cite{ContiEstimate, LiChe12}. Similarly to the proof of the compactness, the liminf inequality for the exchange energy is obtained by considering a family of cylinders contained in the deformed sets that exhaust them, in the sense of measure. For the magnetostatic energy, the previously derived geometric considerations about the deformed domains are combined with an adaptation of results in \cite{thinfilms} to deduce first the compactness of the corresponding solutions and then the convergence of the magnetostatic energies.

The recovery sequence is the subject of Theorem \ref{thm2} and is shown in Section \ref{s:8} by adapting arguments from \cite{BrescianiIncompressible}, showing the existence of a smooth recovery sequence and then arguing by density. First, an ansatz adapted from \cite{Velcic} for the recovery sequence is made. Then, the incompressibility constraint is incorporated by techniques developed in \cite{LiChe12}. The constructed deformations are perturbations of the identity, which then, by a result in \cite{CiraletElasticity} are globally injective. Once more, the fact that the mapping between the plate and the shallow shell is a $C^1$-diffeomorphism is crucial for the adaptation. The argument is finished analogously to \cite[Proposition 5.1]{BrescianiIncompressible}: The convergence of the elastic energies rely on arguments from \cite{FJM-hierachy}, for the exchange energy it is straightforward and for the magnetostatic energy similar arguments as for the lower bound are made. 

To summarize, in this paper, starting from previous analogous results in nonlinear elasticity for shallow shells and in magnetoelasticity for plates, we have identified an effective two-dimensional model for magnetoelastic shallow shells. Further research directions could be the case of magnetoelastic general shells, where possibly strong curvature effects can come into play, as well as an extention to the setting in which deformations are in $W^{1,p}$ with $p \geq 2$, where more delicate measure-theoretical techniques would be needed (see \cite{Bre22}). Note that anisotropic exchange terms as well as DMI terms could be included in our analysis at no cost, for they would behave as continuous perturbations.

%%%%%%%%%%%%%%%%%%%%%%%%%%%%%%%%%%%%%%%%%%%%%%%%%%%%%%%%%%%%%%%%%%%%

\section{Notation and preliminary results}
\label{s:2}
In this section we introduce the relevant notation as well as we recall some results concerning the topological degree and  an estimate of rigidity type.

\subsection*{Notation}

\begin{itemize}
    
    \item By $B_r(x)$ we denote the ball of radius $r$ centered at $x$. If $ x \in \mathbb{R}^2$ we write $D_r(x) := B_r(x)$.
    
    \item Given a vector $x = (x_1, x_2, x_3) \in \mathbb{R}^3$, we write $x' = (x_1, x_2)$ and also $(x)^3$ for the third component.

     \item By $\pi$ we denote the projection $\pi: x \mapsto x'$.

     \item By $\mathbb{S}^2$ we denote the unit sphere.
    
    \item For $x \in \mathbb{R}$, $\bigl \lceil x \bigr \rceil$ denotes the ceiling function of $x$, which maps $x$ onto the smallest integer greater or equal to $x$.
    
    \item For a $n\times n$-matrix $A \in \mathbb{R}^{n\times n}$ we denote its transpose by $A^T$, its inverse by $A^{-1}$ and its transposed inverse by $A^{-T}$.

    \item We write $\nabla_h := (\partial_{x_1},  \partial_{x_2}, \frac{1}{h}\partial_{x_3})^T$ and $\nabla_xg(x,z)$ to indicate that the gradient is taken with respect to $x$.

    \item If a constant $C$ is written as $C(A)$ it is to explicitily highlight its dependence on $A$.

    \item We write $\min\{a,b\} := a \wedge b$ and $\max\{a,b\} := a \vee b$.

    \item Strong convergence is denoted by $\to$ and weak convergence is denoted by $\rightharpoonup$.

    \item By $\mathcal{L}^n(E)$ we denote the n-dimensional Lebesgue measure of some $E \subseteq \mathbb{R}^n$.
    
    \item For the map defining the shallow shell $\Theta_h \circ z_h$ on $\Omega \subseteq \mathbb{R}^3$ later defined in Definition \ref{d:shallowshell}, we use upper case variables $X$, $Y$ and $Z$ to denote elements of $\Theta_h(z_h(\Omega))$ to emphasize the difference in domain compared to $x \in \Omega$.

    \item Elements of the deformed configuration $\Omega^u$, as defined in \eqref{2.1} are denoted by $\xi$.
    
\end{itemize}

 \subsection*{Deformed configuration and topological degree}

 As the deformations $v$ are not homeomorphisms, their image is not necessarily open. Also, deformations are assumed to be Sobolev maps and are therefore defined almost everywhere. This possibly leads to ambigiuities of functions defined on the image of the deformation $v$. To solve these issues the following notion of a deformed configuration is used.

\begin{definition}
    
 Let $\Omega \subseteq \mathbb{R}^n$ be open and bounded and let $v \in  C^0(\overline{\Omega};\mathbb{R}^n)$. The deformed configuration $\Omega^v$, relative to the map $v$, is defined as
 \begin{equation}
     \Omega^v := v(\Omega)\backslash v(\partial \Omega).
     \label{2.1}
 \end{equation}
 \end{definition}

\begin{lemma}
\label{p:opendef}
    Let $\Omega \subseteq \mathbb{R}^n$ be open, bounded , and connected. Let $v \in  C^0(\overline{\Omega};\mathbb{R}^n)$ be almost everywhere injective. Then $\Omega^v$ is an open set.
\end{lemma}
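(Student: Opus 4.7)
\emph{Proof plan.} The goal is to show that every $y_0 \in \Omega^v$ admits an open neighbourhood contained in $\Omega^v$, for which the natural tool is the Brouwer topological degree. Because $\partial\Omega$ is compact and $v$ is continuous on $\overline{\Omega}$, the set $v(\partial\Omega)$ is compact, and since by definition $y_0\notin v(\partial\Omega)$, one may pick $\rho>0$ with $\overline{B_\rho(y_0)}\cap v(\partial\Omega)=\emptyset$. On the open set $\mathbb{R}^n\setminus v(\partial\Omega)$ the Brouwer degree $z\mapsto\deg(v,\Omega,z)$ is well-defined and locally constant, hence constant, equal to some $d\in\mathbb{Z}$, on the connected ball $B_\rho(y_0)$.

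If $d\neq 0$, the solvability property of the degree yields that every $z\in B_\rho(y_0)$ has a preimage in $\Omega$, so $B_\rho(y_0)\subset v(\Omega)$; combined with $B_\rho(y_0)\cap v(\partial\Omega)=\emptyset$ this gives $B_\rho(y_0)\subset\Omega^v$, proving that $y_0$ is interior. The entire argument thus reduces to showing $\deg(v,\Omega,y_0)\neq 0$, which is exactly where the almost-everywhere injectivity of $v$ enters.

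To carry out this last step I would approximate $v$ uniformly on $\overline{\Omega}$ by smooth mappings $v_k$; for $k$ large, $v_k(\partial\Omega)$ remains disjoint from $\overline{B_\rho(y_0)}$, so by homotopy invariance $\deg(v_k,\Omega,y_0)=d$. For smooth $v_k$ one has the classical formula $\deg(v_k,\Omega,z)=\sum_{x\in v_k^{-1}(z)}\operatorname{sgn}\det\nabla v_k(x)$ at regular values $z$, together with the area formula $\int_\Omega\varphi(v_k)\,\det\nabla v_k\,dx=\int_{\mathbb{R}^n}\varphi(z)\,\deg(v_k,\Omega,z)\,dz$ for test functions $\varphi$ supported in $B_\rho(y_0)$. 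The almost-everywhere injectivity of the limit $v$ should then, via a quantitative control on the multiplicity of $v_k$, force $d\neq 0$. An alternative route is to invoke an invariance-of-domain type result adapted to a.e. injective continuous maps, in the spirit of the injectivity results used in nonlinear elasticity (as in Ciarlet's treatise or Šverák-type arguments).

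\emph{Main obstacle.} The principal difficulty is precisely the nonvanishing of $\deg(v,\Omega,y_0)$: since only continuity of $v$ is assumed, no pointwise Jacobian is at hand, and one must carefully bridge the purely measure-theoretic hypothesis of almost-everywhere injectivity with the integer-valued topological degree through an approximation procedure. Everything else—separating $y_0$ from $v(\partial\Omega)$, constancy of the degree on $B_\rho(y_0)$, and the passage from a nonzero degree to openness—is a direct consequence of standard properties of the Brouwer degree.
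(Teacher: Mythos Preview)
The paper does not give its own proof; it refers the reader to \cite[Lemma~2.1]{BrescianiIncompressible}. Your degree-theoretic framework is the standard one and matches the approach of the cited reference: separate $y_0$ from $v(\partial\Omega)$, use local constancy of $\deg(v,\Omega,\cdot)$ on $B_\rho(y_0)$, and conclude $B_\rho(y_0)\subset\Omega^v$ from solvability once $\deg(v,\Omega,y_0)\neq 0$ is known.

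The gap is precisely the step you flag, and your proposed resolution does not close it. Uniform approximation $v_k\to v$ transfers the value of the degree, but almost-everywhere injectivity of the \emph{limit} $v$ yields no control whatsoever on the multiplicity of the smooth $v_k$: a uniformly convergent sequence of highly non-injective smooth maps can have an injective limit, so the area formula $\int_\Omega\varphi(v_k)\det\nabla v_k=\int\varphi\,\deg(v_k,\Omega,\cdot)$ carries no information allowing you to conclude $d\neq 0$. The Ciarlet/\v Sver\'ak alternative you mention requires $W^{1,p}$ regularity with $p>n$ and a sign condition on the Jacobian, neither of which is assumed in the lemma as stated. In the applications within this paper the deformations do satisfy those extra hypotheses ($y\in W^{1,p}$, $p>3$, $\det\nabla y>0$ a.e.), and in that setting the change-of-variables formula forces $\deg(v,\Omega,y)=+1$ for a.e.\ $y\in\Omega^v$, hence everywhere by local constancy; this is the mechanism that actually drives the result in practice. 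For the bare $C^0$ formulation you have written down the right skeleton but not the argument for the one nontrivial step, and for that you should consult the cited reference directly.
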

The proof can be found in \cite[Lemma 2.1]{BrescianiIncompressible}.

We further recall that, given $v \in  C^0(\overline{\Omega};\mathbb{R}^n)$, the topological degree is a map $\text{deg}(v,\Omega, .): \mathbb{R}^n\backslash v(\partial \Omega) \to \mathbb{Z}$. It can be axiomatically defined through the following four properties (cf. \cite{degreefonseca}):

\begin{enumerate}[label=(D.\arabic*)]
\setcounter{enumi}{0}
     \item \label{D1} (\emph{Normalization}) $\text{deg}(id_{|\Omega},\Omega, \xi) = 1$ for every $\xi \in \Omega$;
     \item \label{D2} (\emph{Additivity}] if $A_1, A_2 \subseteq \Omega$ are open with $A_1 \cap A_2 = \emptyset$, then\\
     $\text{deg}(v, A_1 \cup A_2, \xi) = \text{deg}(v, A_1, \xi) + \text{deg}(v, A_2, \xi)$
     for every $\xi \in \mathbb{R}^n\backslash v(\partial A_1 \cup \partial A_2)$;
     \item \label{D3} (\emph{Homotopy invariance}) if $H \in C^0([0,1]\times \overline{\Omega}; \mathbb{R}^n)$ and $\gamma: [0,1] \to \mathbb{R}^n$ satisfies\\
     $\gamma(t) \notin H(\{t\}\times \partial \Omega)$ for every $0 \leq t \leq 1$, then\\
     $\text{deg}(H(0,.), \Omega, \gamma(0)) = \text{deg}(H(t,.), \Omega, \gamma(t))$
     for every $0 \leq t \leq 1$.
     \item \label{D4} (\emph{Solvability}) if $\text{deg}(v,\Omega,\xi) \neq 0$ for some $\xi \in \mathbb{R}^n\backslash v(\partial \Omega)$, then there exists $x \in  \Omega$
     such that $\xi = v(x)$.
 \end{enumerate}

A further property deduced from the previous four, which we will use, is as follows.

\begin{enumerate}[label=(D.\arabic*)]
\setcounter{enumi}{4}
     \item \label{D5} (\emph{Stability})  if $\Tilde{v} \in C^0(\overline{\Omega};\mathbb{R}^n)$ satisfies $\norm{\Tilde{v} - v}_{C^0(\overline{\Omega};\mathbb{R}^n)} < \text{dist}(\xi, v(\partial \Omega))$ for some
     $\xi \in \mathbb{R}^n\backslash v(\partial \Omega)$, then $\xi \in \mathbb{R}^n\backslash \Tilde{v}(\partial \Omega)$ and $\text{deg}(\Tilde{v}, \Omega, \xi) = \text{deg}(v, \Omega, \xi)$.
     \end{enumerate}

\subsection*{Rigidity estimate}

A fundamental tool in our analysis is the following well known rigidity estimate (cf. \cite{pRigidity}).

\begin{theorem}
    (Rigidity estimate)
    \label{Thm 2.1}\\
    Let $\Omega \subseteq \mathbb{R}^n$ be a bounded Lipschitz domain, where $n\geq 2$ and $1 < p < \infty$. Then, there exists a constant $C(\Omega, n, p) > 0$, such that for all $v \in W^{1,p}(\Omega;\mathbb{R}^n)$, there exists $R \in SO(n)$, such that
\begin{equation*}
    \norm{\nabla v - R}_{L^p(\Omega;\mathbb{R}^n)}^p \leq C(\Omega, n, p)\int_{\Omega}\emph{dist}^p(\nabla v , SO(n)) dx.
\end{equation*}
In particular, $C$ is invariant under dilations of $\Omega$, and it is also uniform for the uniform bilipschitz images of a unit ball in $\mathbb{R}^n$.
\end{theorem}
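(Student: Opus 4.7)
The plan is to deduce the $L^p$ rigidity estimate from the classical Friesecke-James-Müller $L^2$ version by means of a Lipschitz truncation argument, and then to establish the scaling and bilipschitz-invariance claims separately by a change of variables and a gluing procedure.

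First I would treat the scaling claim. For $\lambda > 0$ and $\Omega_\lambda := \lambda \Omega$, given $v \in W^{1,p}(\Omega_\lambda;\R^n)$ the map $\tilde v(x) := \lambda^{-1} v(\lambda x)$ lies in $W^{1,p}(\Omega;\R^n)$ with $\nabla \tilde v(x) = \nabla v(\lambda x)$, so both sides of the inequality transform by the same factor $\lambda^n$; this shows the constant does not change under dilation. The bilipschitz-invariance will follow once the estimate is proven on the unit ball, by covering a bilipschitz image of the ball by finitely many small overlapping patches, applying the estimate on each patch, and selecting one of the rotations via a pigeonhole argument on the overlaps to control the distances between the locally chosen $R$'s (using that $\int |R_1 - R_2|^p$ on the overlap is controlled by the sum of two rigidity terms).

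The core estimate on a fixed bounded Lipschitz domain is obtained as follows. Set $f(x) := \mathrm{dist}(\nabla v(x),SO(n))$ and consider the Hardy-Littlewood maximal function $Mf$ and $M|\nabla v|$. For each level $\mu > 0$ I would construct a Lipschitz truncation $v_\mu \in W^{1,\infty}$ of $v$, obtained by mollifying $v$ on the Whitney decomposition of the bad set $\{M|\nabla v| > \mu\}$, satisfying $\|\nabla v_\mu\|_{L^\infty} \leq C\mu$, $v_\mu = v$ on $G_\mu := \{M|\nabla v| \leq \mu\}$, and pointwise $\mathrm{dist}(\nabla v_\mu,SO(n)) \leq C\, Mf$ everywhere. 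To this $v_\mu$ I apply the $L^2$ rigidity (Friesecke-James-Müller) on $\Omega$, obtaining $R_\mu \in SO(n)$ with
\begin{equation*}
\int_\Omega |\nabla v_\mu - R_\mu|^2\,dx \leq C\int_\Omega \mathrm{dist}^2(\nabla v_\mu,SO(n))\,dx.
\end{equation*}

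The passage from $L^2$ to $L^p$ is then performed via a good-$\lambda$ inequality. Choosing $\mu = \mu(t)$ proportional to $t$, the bad set $\Omega\setminus G_\mu$ has measure controlled by $Ct^{-p}\int f^p$ (since $|\nabla v| \leq C(1+f)$ and by weak-type bounds on $M$), while on $G_\mu$ the truncation coincides with $v$. Combining the $L^2$ estimate for $v_\mu$ with a Chebyshev-type argument on the level sets $\{|\nabla v - R| > t\}$ yields an inequality of the form
\begin{equation*}
\bigl|\{|\nabla v - R| > \beta t\} \cap \{Mf \leq \varepsilon t\}\bigr| \leq C\varepsilon^2 \bigl|\{|\nabla v - R| > t\}\bigr|,
\end{equation*}
with $R := R_{\mu(1)}$ fixed once and for all. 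Integrating the distribution function $\int |\nabla v - R|^p = p\int_0^\infty t^{p-1}|\{|\nabla v - R|>t\}|\,dt$, absorbing the leading term for $\varepsilon$ small enough, and using the $L^p$-boundedness of $M$ on $f$ gives the desired bound.

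The main obstacle is Step 3: the careful construction and bookkeeping of the Lipschitz truncation together with the good-$\lambda$ argument, which requires a precise choice of the truncation parameter in terms of the level $t$ so that the contribution of the bad set can be absorbed. The bilipschitz-uniformity also demands that the constants from the covering/pigeonhole step depend only on the bilipschitz constant, which needs a uniform Whitney decomposition adapted to the image.
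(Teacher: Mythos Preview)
The paper does not prove this theorem at all: it is stated as a preliminary tool and merely cited from the literature (``cf.\ \cite{pRigidity}''), so there is no paper proof to compare against. Your proposal is therefore not competing with any argument in the paper.

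As for the sketch itself, the overall strategy---reducing the $L^p$ estimate to the $L^2$ Friesecke--James--M\"uller result via Lipschitz truncation and a good-$\lambda$ inequality---is one of the known approaches in the literature. A few points would need tightening, however. First, the pointwise bound $\mathrm{dist}(\nabla v_\mu,SO(n)) \leq C\,Mf$ cannot hold on the bad set as stated: there the truncation only guarantees $|\nabla v_\mu|\leq C\mu$, and the distance to $SO(n)$ is controlled by $C\mu$ plus a Whitney-extension term, not by $Mf$ alone. Second, fixing a single rotation $R = R_{\mu(1)}$ for all levels $t$ in the good-$\lambda$ argument is the delicate step; the standard treatments either re-choose $R$ at each dyadic scale and then show consecutive rotations are close, or run the argument for the maximal function of $|\nabla v - R|$ after first establishing a weak-type bound. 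As written, the passage from the level-$t$ truncation estimate to a distributional inequality for the \emph{fixed} $R$ is not justified. Finally, the bilipschitz-uniformity via a pigeonhole on overlaps is plausible but would require care to avoid circularity (you need the local constants to be uniform before you can compare the local rotations).
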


\subsection*{Shallow shells}

For $ \omega \subseteq \mathbb{R}^2$ bounded, connected Lipschitz domain we denote the corresponding plate with thickness \textcolor{Green}{1 by $\Omega = \omega \times (-\frac{1}{2},\frac{1}{2})$.} Call $\Omega_h = z_h(\Omega) := \omega \times (-\frac{h}{2},\frac{h}{2})$ the plate with thickness $h$. \textcolor{Green}{Given a function $\theta \in C^\infty(\overline{\omega})$}, we consider the map \textcolor{Green}{$\Theta_h: \overline{\Omega}_h \to \mathbb{R}^3$}defined as 
\begin{equation}
    \Theta_h\circ z_h (x) := (x', h\theta(x')) + hx_3n_h(x'),
    \label{2 shell map}
\end{equation}
where $n_h$ is a unit normal vector field relative to the mid-surface $\hat{\omega}_h:=\Theta_h \circ z_h (\Bar{\omega})$, namely, the graph of the function $h\theta(\cdot)$.

\textcolor{Green}{For comparison to the two main reference note the following differences in notation. In \cite{BrescianiIncompressible} we have $\omega \equiv S$. In \cite{Velcic} we have $z_h \equiv P^h$ and consider the case $f^h \equiv h$.}

\begin{definition}[Shallow shell]
\label{d:shallowshell}
With the above notation, for every $h >0$, the \emph{shallow shell} $\hat{\Omega}_h$ with thickness $h$ is defined as 
\[
\hat{\Omega}_h := \Theta_h \circ z_h(\Omega).
\]
\end{definition}

Under the previous assumptions $\Theta_h$ satisfies the following properties:

\begin{proposition}
    \cite[Theorem 1]{Velcic}\\
    \label{Prop1}
    Let $\Omega, \Theta_h$ and $z_h$ be as defined before. Then, there exists $h_0 = h_0(\theta) > 0$ such that for all $h \leq h_0$ the map $\Theta_h: \Omega_h \to \hat{\Omega}_h$ is a $C^1$-diffeomorphism. Furthermore, the Jacobian matrix $(\nabla \Theta_h) (z_h(x))$ is invertible for all $x \in \overline{\Omega}$ and satisfies
    \begin{align}
        &(\det \nabla\Theta_h)(z_h(x)) = 1 + h^2\delta_h(z_h(x)) ,
        \label{4.6}\\
        &(\nabla \Theta_h)(z_h(x)) = Id - hA(x') + h^2(F_h(z_h(x)),
        \label{4.7}\\
        &(\nabla \Theta_h)^{-1} (z_h(x)) = Id + hA(x') + h^2(G_h(z_h(x)),
        \label{4.8}
    \end{align}
    with
    \begin{align*}
        &\norm{\nabla(\Theta_h)\circ z_h - Id}_{L^{\infty}(\Omega;\mathbb{R}^{3\times 3})} \leq hC,\\
        &\norm{\nabla(\Theta_h)^{-1}\circ z_h - Id}_{L^{\infty}(\Omega;\mathbb{R}^{3\times 3})} \leq hC,
    \end{align*}
    where $A(x') = \begin{pmatrix}
        0 & 0 & \partial_1\theta(x')\\
        0 & 0 & \partial_2\theta(x')\\
        -\partial_1\theta(x') & -\partial_2\theta(x') & 0
    \end{pmatrix}$,
    
    and $\delta_h: \overline{\Omega}_h \to \mathbb{R}$, $F_h: \overline{\Omega}_h \to \mathbb{R}^{3\times 3}$ and $G_h: \overline{\Omega}_h \to \mathbb{R}^{3\times 3}$ satisfy
    \begin{align*}
        &\underset{0<h\leq h_0}{\sup} \ \underset{x_h \in \overline{\Omega}_h}{\max}|\delta_h(z_h(x))| \leq C_0,\\
         &\underset{0<h\leq h_0}{\sup} \ \underset{i,j}{\max} \ \underset{x_h \in \overline{\Omega}_h}{\max}|(F_h(z_h(x)))_{i,j}| \leq C_0,\\
         &\underset{0<h\leq h_0}{\sup} \ \underset{i,j}{\max} \ \underset{x_h \in \overline{\Omega}_h}{\max}|(G_h(z_h(x)))_{i,j}| \leq C_0,
    \end{align*}
    for some constant $C_0 > 0$.
\end{proposition}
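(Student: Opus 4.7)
The strategy is to perform a direct Taylor expansion in $h$ of the defining formula \eqref{2 shell map}, read off $\nabla\Theta_h$ and its inverse via the chain rule, and then deduce the diffeomorphism property from the resulting near-identity structure. First I make the unit normal explicit: since the mid-surface $\hat{\omega}_h$ is the graph of $h\theta$, a natural pair of tangent vectors is $(1,0,h\partial_1\theta)^T$ and $(0,1,h\partial_2\theta)^T$, so
\[
n_h(x') = \frac{1}{\sqrt{1+h^2|\nabla'\theta(x')|^2}}\bigl(-h\partial_1\theta(x'),\,-h\partial_2\theta(x'),\,1\bigr)^T.
\]
Expanding the square root in $h$ and using $\theta \in C^\infty(\overline{\omega})$ gives $n_h = e_3 - h(\partial_1\theta,\partial_2\theta,0)^T + h^2 r_h(x')$, with $r_h$ and its derivatives uniformly bounded on $\overline{\omega}$ for all $h \leq h_0$.

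Next, differentiating \eqref{2 shell map} column-by-column, the first two columns of $\nabla(\Theta_h\circ z_h)(x)$ are $(e_i,h\partial_i\theta(x'))^T + hx_3\partial_i n_h(x')$ for $i=1,2$, while the third column equals $hn_h(x')$. Since $\nabla z_h = \mathrm{diag}(1,1,h)$, the chain rule yields $(\nabla\Theta_h)(z_h(x)) = \nabla(\Theta_h\circ z_h)(x)\,\mathrm{diag}(1,1,h^{-1})$, so the third column of $(\nabla\Theta_h)(z_h(x))$ is exactly $n_h(x')$. Substituting the expansion of $n_h$ and grouping terms by powers of $h$ produces \eqref{4.7}, with $F_h$ smooth in $(x',x_3)$ and bounded in $L^\infty$ independently of $h$ (with bound explicit in terms of the $C^2$-norm of $\theta$). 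Formula \eqref{4.6} then follows from the multilinearity of the determinant combined with $\mathrm{tr}(A)=0$ (since $A$ is antisymmetric), which kills the $O(h)$ contribution and leaves an $O(h^2)$ remainder $\delta_h$. For \eqref{4.8}, once $h$ is small enough the matrix $M := hA - h^2F_h$ satisfies $\|M\|_\infty \leq 1/2$, so the Neumann series $(Id-M)^{-1} = \sum_{k \geq 0} M^k$ converges uniformly; writing $(Id-M)^{-1} = Id + M + M^2(Id-M)^{-1}$ and absorbing the quadratic tail into $h^2 G_h$ proves the claim. The $L^\infty$-bounds on $\nabla(\Theta_h\circ z_h)-Id$ and on its inverse are then immediate corollaries.

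To conclude the $C^1$-diffeomorphism property, the relation \eqref{4.6} ensures $\det(\nabla\Theta_h)(z_h(x)) > 0$ throughout $\overline{\Omega}$ for $h \leq h_0$, so $\Theta_h$ is a local $C^1$-diffeomorphism by the inverse function theorem. For global injectivity, the near-identity bound $\|\nabla(\Theta_h\circ z_h) - \nabla z_h\|_\infty \leq Ch$, combined with the Lipschitz structure of the connected domain $\Omega$, yields injectivity of $\Theta_h\circ z_h$ via a standard contraction/fixed-point argument for $h$ small, and since $z_h$ is obviously injective, so is $\Theta_h$ on $\Omega_h$. Setting $\hat{\Omega}_h := \Theta_h(\Omega_h)$ then makes $\Theta_h:\Omega_h \to \hat{\Omega}_h$ a $C^1$-diffeomorphism.

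The main obstacle is the careful bookkeeping of the $O(h^2)$ remainders so as to render the uniform bounds on $F_h$, $G_h$ and $\delta_h$ truly independent of $h$, which ultimately reduces to controlling simultaneously the Taylor remainder of $n_h$ and the Neumann-series tail in terms of $\|\theta\|_{C^k(\overline{\omega})}$ for small $k$. The global injectivity, although morally clear from the near-identity structure, also requires some care to be made compatible with the boundary geometry of $\Omega$ and with the anisotropic scaling $\mathrm{diag}(1,1,h)$ inherent to $z_h$.
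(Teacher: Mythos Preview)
The paper does not actually prove this proposition; it is quoted verbatim from \cite[Theorem~1]{Velcic} and stated without argument. Your proposal is therefore not competing against any proof in the present paper.

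That said, your plan is correct and is essentially the only natural route: make $n_h$ explicit from the graph parametrization, compute $\nabla(\Theta_h\circ z_h)$ column by column, divide out $\nabla z_h=\mathrm{diag}(1,1,h)$ via the chain rule, and read off the $Id - hA + O(h^2)$ structure. The determinant expansion \eqref{4.6} via $\mathrm{tr}\,A=0$ and the Neumann-series derivation of \eqref{4.8} are both standard and correct. One small remark on the injectivity step: rather than comparing $\Theta_h\circ z_h$ to $z_h$ on $\Omega$ (where the anisotropic scaling complicates the contraction estimate, as you note), it is cleaner to work directly with $\Theta_h$ on $\Omega_h$. Since $\nabla\Theta_h = Id + O(h)$ uniformly on $\overline{\Omega}_h$, and since any two points of $\Omega_h=\omega\times(-h/2,h/2)$ can be joined by a path in $\Omega_h$ of length at most $C_\omega$ times their Euclidean distance with $C_\omega$ depending only on the Lipschitz geometry of $\omega$ (connect first in the horizontal slice, then vertically), the mean-value estimate gives $|\Theta_h(z)-\Theta_h(w)|\geq (1-C h)|z-w|$ directly, with no $h$-dependence hidden in the path constant. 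This sidesteps the ``care'' you flag at the end.
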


%%%%%%%%%%%%%%%%%%%%%%%%%%%%%%%%%%%%%%%%%%%%%%%%%%%%%%%%%%%%%%%%

\section{Setting of the problem and main results}
\label{s:3}
\subsection*{Setting}
Consider the shallow shell with thickness $h$ as defined in \eqref{2 shell map}. We denote by $\pi_h \colon \hat{\Omega}_h \to \hat{\omega}_h$ the projection onto the midsurface along the normal $n_h$.

We will consider deformations $u \in W^{1,p}(\hat{\Omega}_h;\mathbb{R}^3) $ with $p > 3$. This allows us to specify pointwise the value of $u$ and almost everywhere the value of $\det \nabla u$. The class of admissible deformation is made by those $u \in W^{1,p}(\hat{\Omega}_h;\mathbb{R}^3) $ such that $\det \nabla u >0$ pointwise almost everywhere and $u$ is almost everywhere injective.

The magnetizations are maps $m \in W^{1,2}(\hat{\Omega}_h^u;\mathbb{S}^2) $, where $\hat{\Omega}_h^u$ is defined as in \eqref{2.1} ( recall that thanks to the fact that $u$ is almost everywhere injective then by Proposition \ref{p:opendef} the set $\hat{\Omega}_h^u$ is open). Additionally, we consider a map $\psi_{m}: \mathbb{R}^3 \to \mathbb{R}$ which is called the stray field potential, being the weak solution to the Maxwell equation:
\begin{equation}
    \Delta \psi_{m} = \nabla \cdot (\chi_{\hat{\Omega}^u_h}m) \ \ \text{in} \ \mathbb{R}^3,
    \label{3.2}
\end{equation}
where $\chi_{\hat{\Omega}^u_h}m$ extends $m$ by zero to the whole space.

With this, for every $h > 0$ the energy of the magnetoelastic shallow shell $\hat{\Omega}_h$ is given for every $u \in W^{1,p}(\hat{\Omega}_h;\mathbb{R}^3) $ and $m \in W^{1,2}(\hat{\Omega}_h^u;\mathbb{S}^2) $ by
\begin{equation*}
    \mathcal{E}_h(u,m) := \frac{1}{h^\beta}\int_{\hat{\Omega}_h}W^{inc}(\nabla u, m \circ u) \ dx + \alpha\int_{\hat{\Omega}^u_h}|\nabla m|^2 \  d\xi + \frac{1}{2}\int_{\mathbb{R}^3}|\nabla \psi_m|^2 \ d\xi,
\end{equation*}
where $\beta > 2p$ and $\alpha > 0$ and $W^{inc}: \mathbb{R}^{3\times 3}\times \mathbb{S}^2 \to [0,+\infty]$ is called the incompressible energy density:
\begin{equation}
    W^{inc}(F,\nu):= \left\{
    \begin{aligned}
        W(F,\nu), \ \ &\det F = 1\\
        +\infty, \ \ &\det F \neq 1
    \end{aligned}
    \right.,
    \label{3.4}
\end{equation}
 where $W: \mathbb{R}^{3\times 3}\times \mathbb{S}^2 \to [0,+\infty]$ is a continuous and nonlinear elastic energy density, which fulfills the following properties:

\begin{enumerate}[label=(W.\arabic*)]
\setcounter{enumi}{0}
    \item \label{W1} (\emph{Frame indifference}) \begin{equation}
        W(RF, R\nu) = W(F,\nu),
        \label{3.5}
    \end{equation}
    for every $R \in SO(3), F \in \mathbb{R}^{3\times 3}, \nu \in \mathbb{S}^2$,
    \item \label{W2} (\emph{Normalization}) \begin{equation}
        W(Id, \nu) = 0,
        \label{3.6}
    \end{equation}
    for every $\nu \in \mathbb{S}^2$,
    \item \label{W3} (\emph{Growth}) There exists a $C > 0$ such that
    \begin{equation}
        W(F,\nu) \geq C \text{dist}^2(F,SO(3))\vee \text{dist}^p(F,SO(3)),\\
     \label{3.7}
    \end{equation} 
    for every $F \in \mathbb{R}^{3\times 3}, \ \nu \in \mathbb{S}^2$,
    \item \label{W4} (\emph{Regularity}) There exists a $\delta > 0$, such that for all $\nu \in \mathbb{S}^2$ the function
    \begin{equation}
      W(\cdot,\nu) \ \text{is of class} \ C^2,
     \label{3.8}
    \end{equation}
    on the set $\{F \in \mathbb{R}^{3\times 3} : \text{dist}(F,SO(3)) < \delta\}$.
\end{enumerate}

Frame indifference corresponds to the modelling assumption that if the observer is rotated, the energy is not affected. The normalization hypothesis incorporates the fact that the reference configuration $\hat{\Omega}_h$ is a natural state of the material with minimal energy. 

By \ref{W2} and \ref{W4}, we have the following second-order Taylor expansion
\begin{equation}
    W(Id + G, \nu) = \frac{1}{2}Q_3(G,\nu) + \omega(G,\nu),
    \label{3.9}
\end{equation}
for $G \in \mathbb{R}^{3\times 3}$ with $\abs{G}<\delta$ and for every $\nu \in \mathbb{S}^2$, where
\begin{equation*}
    Q_3(G,\nu) := \mathbb{C}^\nu G:G,
\end{equation*}

with the fourth-order tensor $\mathbb{C}^\nu \in \mathbb{R}^{3\times 3 \times 3 \times 3} $ defined by
\begin{equation*}
    \mathbb{C}^\nu := \partial_F^2W(Id, \nu),
\end{equation*}

and, for every $\nu \in \mathbb{S}^2 $, it holds that $\omega(G,\nu) = o(\abs{G}^2)$, as $\abs{G} \to 0^+$.

By \ref{W2}, for every $\nu \in \mathbb{S}^2$, $\mathbb{C}^\nu $ is positive definite, making the quadratic form $Q_3(.,\nu)$ convex.

In addition we assume
\begin{enumerate}[label=(W.\arabic*)]
\setcounter{enumi}{4}
\item The map $\nu \mapsto \mathbb{C}^\nu$ is continuous from $\mathbb{S}^2 \to \mathbb{R}^{3 \times 3 \times 3 \times 3}$, and \label{W5}
    \begin{equation*}
    \overline{\omega}(t) := \sup \big\{ \frac{\omega(G,\nu)}{\abs{G}^2} \ : \ G \in \mathbb{R}^{3 \times 3}, \abs{G} \leq t, \nu \in \mathbb{S}^2 \big\} = o(t^2), \ \text{as} \ t \to 0^+.
    \end{equation*}
\end{enumerate}

\subsection*{Fixing the domain}
As customary in dimension reduction problems, we perform a change of variables to rewrite $\mathcal{E}_h(u,m)$ as an integral functional on the fixed domain $\Omega$ and a fixed deformed domain which is the counterpart to $\hat{\Omega}_h^u$. Using the fact that for for $h > 0$ small, $\Theta_h \circ z_h$ is a $C^1-$Diffeomorphism, we first write the elastic energy functional in terms of the map $u\circ \Theta_h \circ z_h \in W^{1,p}(\Omega;\mathbb{R}^3)$:
\begin{align*}
    \mathcal{E}_h(u,m) &= \frac{1}{h^\beta}\int_{\Theta_h(z_h(\Omega))}W^{inc}(\nabla u, m \circ u) \ dx + \alpha\int_{\hat{\Omega}^u_h}|\nabla m|^2 \  d\xi + \frac{1}{2}\int_{\mathbb{R}^3}|\nabla \psi_m|^2 \ d\xi\\
    &= \frac{1}{h^\beta}\int_{\Omega}W^{inc}((\nabla u) \circ \Theta_h \circ z_h, m \circ u \circ \Theta_h \circ z_h)\abs{\det \nabla(\Theta_h \circ z_h)} \ dx\\
    & \ + \alpha\int_{\hat{\Omega}^u_h}|\nabla m|^2 \  d\xi + \frac{1}{2}\int_{\mathbb{R}^3}|\nabla \psi_m|^2 \ d\xi.
\end{align*}

By the chain rule, there holds
\begin{align*}
    \abs{\det \nabla(\Theta_h \circ z_h) (x)} = \abs{\det((\nabla \Theta_h) \circ z_h(x))\det(\nabla z_h(x))} = h\abs{\det((\nabla \Theta_h) \circ z_h(x))}.
\end{align*}

We therefore obtain
\begin{align*}
    h^{-1}\mathcal{E}_h(u,m) &= \frac{1}{h^\beta}\int_{\Omega}W((\nabla u) \circ \Theta_h \circ z_h, m \circ u \circ \Theta_h \circ z_h) \abs{\det((\nabla \Theta_h) \circ z_h)} \ dx\\
    & \ + \frac{\alpha}{h}\int_{\hat{\Omega}^u_h}|\nabla m|^2 \  d\xi + \frac{1}{2h}\int_{\mathbb{R}^3}|\nabla \psi_m|^2 \ d\xi.
\end{align*}

Notice, that again by the chain rule the following identity holds true:
\begin{equation*}
    (\nabla u)\circ \Theta_h \circ z_h = \nabla_h(u\circ \Theta_h\circ z_h)((\nabla\Theta_h)^{-1} \circ z_h).
\end{equation*}

We set
\begin{align}
    &\kappa_h := \abs{\det((\nabla \Theta_h) \circ z_h(x))},
    \label{3.15}\\
    &M_h := ((\nabla\Theta_h)^{-1} \circ z_h),
    \label{3.16}\\
    &y := u \circ \Theta_h \circ z_{h|_\Omega},
    \label{3.17}
\end{align}

for every $h > 0$. With this he rescaled magnetoelastic energy functional $E_h = h^{-1}\mathcal{E}_h$ is given by

\begin{equation*}
    E_h(y, m) := \frac{1}{h^{\beta}}\int_{\Omega} W^{inc}(\nabla_h y M_h, m \circ y) \ \kappa_h \ dx + \frac{\alpha}{h}\int_{\Omega^y}|\nabla m|^2 \  d\xi + \frac{1}{2h}\int_{\mathbb{R}^3}|\nabla \psi_m|^2 \ d\xi,
\end{equation*}

for every pair of admissible states $(y,m)$.

 \begin{definition}[Admissible states]
We call a deformation $y$ admissible if it belongs to the set
\begin{equation*}
   \mathcal{Y} := \big\{ y \in W^{1,p}(\Omega,\mathbb{R}^3) \ : \ \det \nabla y > 0 \ \text{a.e. in} \ \Omega, \  y \ \text{a.e. injective in} \ \Omega\big\}.
\end{equation*}
The associated admissible magnetizations are maps $m \in W^{1,2}(\Omega^y;\mathbb{S})$, with $\Omega^y$ as in \eqref{2.1}.
Together this defines the set of admissible states $\mathcal{Q}$:
\begin{equation*}
    \mathcal{Q} := \big\{ (y,m) \ | \ y \in \mathcal{Y}, \ m \in W^{1,2}(\Omega^y;\mathbb{S}) \big\}.
\end{equation*}
\end{definition}

We split the energy into three parts: the elastic energy $E^{el}_h$, the exchange energy $E^{exc}_h$ and the magnetostatic energy $E^{mag}_h$, given by
\begin{align*}
    &E^{el}_h := \frac{1}{h^{\beta}}\int_{\Omega} W^{inc}(\nabla_h y M_h, m \circ y) \ \kappa_h \ dx\\
    &E^{exc}_h := \frac{\alpha}{h}\int_{\Omega^y}|\nabla m|^2 \  d\xi\\
    &E^{mag}_h := \frac{1}{2h}\int_{\mathbb{R}^3}|\nabla \psi_m|^2 \ d\xi.
\end{align*}

 With the following Lemma, we collect some properties of the quantities introduced above relative to the geometry of the shallow shell that are uselful in upcoming proofs.

\begin{lemma}
\label{L3.1}
    For the maps $\kappa_h$ and $M_h$ defined in \eqref{3.15} and \eqref{3.16} and $h > 0$ sufficiently small, there holds
    \begin{align}
        &\kappa_h \leq C_1,
        \label{4.38}\\
        &\kappa_h \geq C_2,
        \label{4.39}\\
        &\norm{M_h}_{L^\infty(\Omega;\mathbb{R}^{3\times 3})} \leq C_1,
        \label{4.40}\\
        &\norm{M_h}_{L^\infty(\Omega;\mathbb{R}^{3\times 3})} \geq C_2,
        \label{4.41}\\
        &\norm{M_h \xi}_{L^2(\mathbb{R}^3;\mathbb{R}^{3\times 3})} \geq C \ \norm{\xi}_{L^2(\mathbb{R}^3;\mathbb{R}^{3\times 3})},
        \label{4.41b}
    \end{align}
    for all $\xi \in \mathbb{R}^3$, where $C_1, C_2, C > 0$. Moreover, as $h \to 0^+$ for all $ 1 \leq r \leq \infty$, there holds

    \begin{align}
        &\kappa_h \to 1 \ \ \text{in} \ L^r(\Omega),
        \label{4.42}\\
        &M_h \to Id \ \ \text{in} \ L^r(\Omega; \mathbb{R}^{3 \times 3})
        \label{4.43}
    \end{align}
\end{lemma}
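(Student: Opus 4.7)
The proof is essentially a direct exploitation of the asymptotic expansions provided by Proposition \ref{Prop1}. All six bounds should reduce to the observation that, for $h$ sufficiently small, both $\kappa_h$ and $M_h$ are uniform $O(h)$ (or $O(h^2)$) perturbations of the constants $1$ and $\mathrm{Id}$ respectively. The plan is to split the estimates into three groups according to which identity from Proposition \ref{Prop1} is invoked, and then to obtain the $L^r$–convergence as a byproduct of the uniform bounds.

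\textbf{Bounds on $\kappa_h$ and the convergence \eqref{4.42}.} I would start from the identity \eqref{4.6}, which gives $\kappa_h(x) = |1 + h^2 \delta_h(z_h(x))|$, together with the uniform bound $\sup_h \|\delta_h\circ z_h\|_{L^\infty(\Omega)}\le C_0$. Choosing $h_0$ small enough so that $h_0^2 C_0 < 1/2$, one obtains immediately the two-sided bound $\tfrac{1}{2}\le \kappa_h\le \tfrac{3}{2}$ on $\Omega$, proving \eqref{4.38} and \eqref{4.39}. The same estimate yields $\|\kappa_h - 1\|_{L^\infty(\Omega)}\le h^2 C_0 \to 0$, and since $\Omega$ is bounded this uniform convergence implies \eqref{4.42} in every $L^r(\Omega)$ with $1\le r\le \infty$.

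\textbf{Bounds on $M_h$ and the convergence \eqref{4.43}.} From \eqref{4.8} and the boundedness of $A$ on $\overline{\omega}$ and of $G_h\circ z_h$ in $L^\infty(\Omega;\mathbb{R}^{3\times 3})$, I would write
\begin{equation*}
\|M_h - \mathrm{Id}\|_{L^\infty(\Omega;\mathbb{R}^{3\times 3})} \le h\,\|A\|_{L^\infty(\omega;\mathbb{R}^{3\times 3})} + h^2 C_0 \le h\, C',
\end{equation*}
which produces the upper bound \eqref{4.40} (e.g. $\|M_h\|_{L^\infty}\le 1+hC'$) and the lower bound \eqref{4.41} (e.g. $\|M_h\|_{L^\infty}\ge 1-hC'\ge \tfrac{1}{2}$ for small $h$). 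The same uniform estimate gives \eqref{4.43} in every $L^r(\Omega;\mathbb{R}^{3\times 3})$, again by boundedness of $\Omega$.

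\textbf{The uniform ellipticity estimate \eqref{4.41b}.} This is the only statement that uses $M_h$ as an operator rather than just as a bounded matrix field, but it still follows from the pointwise estimate above. For any fixed $\xi$, a reverse triangle inequality gives
\begin{equation*}
|M_h(x)\,\xi| \;\ge\; |\xi| - |(M_h(x)-\mathrm{Id})\,\xi| \;\ge\; (1 - hC')\,|\xi| \;\ge\; \tfrac{1}{2}\,|\xi|
\end{equation*}
for all $x\in\Omega$ and $h$ sufficiently small; squaring and integrating then produces the desired $L^2$ lower bound with $C=1/2$. I expect no real obstacle here: the main care is just in choosing a common $h_0$ so that all five smallness conditions ($h^2 C_0 < 1/2$ for $\kappa_h$, $hC' < 1/2$ for $M_h$, and $h\le h_0(\theta)$ from Proposition \ref{Prop1}) are simultaneously in force; the estimates themselves are linear in the data supplied by Proposition \ref{Prop1} and require no further argument.
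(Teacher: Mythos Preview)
Your proposal is correct and follows essentially the same route as the paper's proof: both derive all the bounds directly from the expansions \eqref{4.6} and \eqref{4.8} of Proposition~\ref{Prop1} via the (reverse) triangle inequality, and obtain the $L^r$ convergences as immediate consequences of the uniform $O(h)$ (resp.\ $O(h^2)$) closeness to $\mathrm{Id}$ (resp.\ $1$). If anything, your write-up is slightly more explicit than the paper's, which dispatches \eqref{4.41b} with the phrase ``by an analogous argument''.
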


\begin{proof}

    This Lemma is a direct consequence of Proposition \ref{Prop1}. By \eqref{4.6}, it follows that
    \begin{align*}
        \kappa_h = \abs{1 + h^2\delta_h \circ z_h},
    \end{align*}
    and also
    \begin{align*}
        \kappa_h &\leq 1 + h^2C_0,\\
        \kappa_h &\geq 1 - h^2C_0.
    \end{align*}
    Inequalities \eqref{4.38} and \eqref{4.39} follow from this observation if $h > 0$ is sufficiently small. Property \eqref{4.40} holds because of \eqref{4.8}. By the inverse triangle inequality, there holds
    \begin{align*}
        \norm{M_h}_{L^\infty(\Omega;\mathbb{R}^{3\times 3})} &= \norm{Id + h(A \circ \pi) + h^2(G_h \circ z_h)}_{L^\infty(\Omega;\mathbb{R}^{3\times 3})}\\
        &\geq \norm{Id + h(A \circ \pi)}_{L^\infty(\Omega;\mathbb{R}^{3\times 3})} - h^2 \norm{G_h \circ z_h}_{L^\infty(\Omega;\mathbb{R}^{3\times 3})}\\
        &\geq  \norm{Id}_{L^\infty(\Omega;\mathbb{R}^{3\times 3})} - h  \norm{A \circ \pi}_{L^\infty(\Omega;\mathbb{R}^{3\times 3})} - h^2 \norm{G_h \circ z_h}_{L^\infty(\Omega;\mathbb{R}^{3\times 3})}\\
        &\geq 1 - hC - h^2C_0.
    \end{align*}
    For $h$ small enough \eqref{4.41} follows. By an analogous arguement \eqref{4.41b} is shown. 
    
    Eventually, \eqref{4.42} and \eqref{4.43} follow from \eqref{4.6} and \eqref{4.8}.
    
\end{proof}

\subsection*{Main results}

First we introduce the limiting functional. Analogously to \cite{BrescianiIncompressible}, for every $H \in \mathbb{R}^{2\times 2} $ and $\nu \in \mathbb{S}^2$ we set
\begin{equation*}
    \begin{aligned}
         Q_2^{inc}(H, \nu) := \min \Bigg\{&Q_3\bigg(\left(\begin{array}{c|c}
    	H & 0'\\ 
    	\hline 
    	(0')^T & 0
    \end{array}\right) 
    + c \otimes e_3 + e_3 \otimes c, \nu \bigg) : c \in \mathbb{R}^3, \\
         &\text{tr}\bigg(\left(\begin{array}{c|c}
    	H & 0'\\ 
    	\hline 
    	(0')^T & 0
    \end{array}\right) 
    + c \otimes e_3 + e_3 \otimes c\bigg) = 0\Bigg\}.
    \end{aligned}
\end{equation*}

With this, for every triple $(u,v,\lambda) \in W^{1,2}(\omega;\mathbb{R}^2) \times W^{2,2}(\omega) \times W^{1,2}(\omega;\mathbb{S}^2) $ we define the limiting energy as
\begin{equation*}
    \begin{aligned}
        E(u,v,\lambda) := \ &\frac{1}{2}\int_\omega Q_2^{inc}(\text{sym}(\nabla ' u + \nabla'v \otimes \nabla'\theta), \lambda) \ dx'\\
        + \ &\frac{1}{24}\int_\omega Q_2^{inc}((\nabla ')^2v, \lambda) \ dx'
        + \alpha\int_\omega \abs{\nabla ' \lambda}^2 \ dx' + \frac{1}{2}\int_\omega \abs{(\lambda)^3}^2 \ dx'.
    \end{aligned}
\end{equation*}

We again split the above energy functional into three parts as follows
\begin{align*}
    &E^{el}(u,v,\lambda) = \frac{1}{2}\int_\omega Q_2^{inc}(\text{sym}(\nabla ' u + \nabla'v \otimes \nabla'\theta), \lambda) \ dx' + \frac{1}{24}\int_\omega Q_2^{inc}((\nabla ')^2v, \lambda) \ dx'\\
    &E^{exc}(\lambda) = \alpha\int_\omega \abs{\nabla ' \lambda}^2 \ dx'\\
    &E^{mag}(\lambda) = \frac{1}{2}\int_\omega \abs{(\lambda)^3}^2 \ dx'.
\end{align*}

Lastly, we introduce in-plane and out-of-plane displacements.

\begin{definition}
(In-plane and out-of-plane displacement)
\label{Def3.1}\\
    Let $y_h \in W^{1,p}(\Omega;\mathbb{R}^3)$ be a deformation. The associated in-plane displacement $U_h^{y_h}: \omega \to \mathbb{R}^2 $ and out-of-plane displacement $V_h^{y_h}: \omega \to \mathbb{R}$ are given by
    \begin{align}
        &U_h^{y_h}(x') := \frac{1}{h^{\frac{\beta}{2}}}\int_{-\frac{1}{2}}^{\frac{1}{2}}\big(y_h'(x',x_3) - x'\big) \ dx_3,
        \label{3.23}\\
        &V_h^{y_h}(x') := \frac{1}{h^{\frac{\beta}{2}-1}}\int_{-\frac{1}{2}}^{\frac{1}{2}}\big((y_h)^3(x',x_3) - h\theta(x')\big) \ dx_3.
        \label{3.24}
    \end{align}
\end{definition}

We are now in position to state the two main contributions of this paper.
\begin{theorem}
\label{thm1}
    (Compactness and lower bound)\\
    Assume $p > 3$ and $\beta > 2p$ and that the elastic energy density $W$ satisfies \ref{W1}--\ref{W5}. Let $((y_h,m_h))_h \subseteq \mathcal{Q}$ be a sequence of admissible states which satisfies $E_h(y_h,m_h) \leq C$ for every $h>0$. Then, there exists a sequence of rotations $(Q_h)_h \subseteq SO(3)$ and a sequence of translation vectors $(c_h)_h \subseteq \mathbb{R}^3$, such that by setting $\Tilde{y}_h := T_h \circ y_h$ and $\Tilde{m}_h := Q_h^Tm_h \circ T_h^{-1}$, where $T_h: \mathbb{R}^3 \to \mathbb{R}^3$ is the rigid motion defined by $T_h(\xi) := Q_h^T\xi - c_h$ for every $\xi \in \mathbb{R}^3$, we have, as $h \to 0^+$:
    \begin{align*}
        &U_h^{\Tilde{y}_h} \rightharpoonup u \ \ \text{in} \ W^{1,2}(\omega;\mathbb{R}^3) \ \ \text{for some} \ u \in W^{1,2}(\omega;\mathbb{R}^3);\\
         &V_h^{\Tilde{y}_h} \to v \ \ \text{in} \ W^{1,2}(\omega) \ \ \text{for some} \ v \in W^{2,2}(\omega);\\
         &\Tilde{m}_h \circ \Tilde{y}_h \to \lambda \ \ \text{in} \ L^r(\Omega;\mathbb{R}^3) \ \ \text{for every} \ 1 \leq r < \infty, \ \text{for some} \ \lambda \in W^{1,2}(\omega;\mathbb{S}^2).
    \end{align*}
    Moreover, the following inequality holds:
    \begin{equation*}
        E(u,v,\lambda) \leq \liminf_{h\to 0^+}E_h(y_h,m_h).
    \end{equation*}
\end{theorem}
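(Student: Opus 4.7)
The plan is to reduce the shallow-shell problem to a plate-type problem via the $C^1$-diffeomorphism $\Theta_h\circ z_h$ of Proposition \ref{Prop1}, and then to adapt the argument of \cite[Proposition 4.2]{BrescianiIncompressible} on the fixed reference set $\Omega$, treating $\nabla_h y_h M_h$ as the effective deformation gradient. The energy bound $E_h(y_h,m_h)\le C$, hypothesis \ref{W3} and Lemma \ref{L3.1} immediately yield
\begin{equation*}
    \int_\Omega \mathrm{dist}^2(\nabla_h y_h M_h,SO(3))\,dx \leq C h^{\beta}, \qquad \int_\Omega \mathrm{dist}^p(\nabla_h y_h M_h,SO(3))\,dx \leq C h^{\beta}.
\end{equation*}

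First I would construct a Sobolev approximation by rotations. Building on Theorem \ref{Thm 2.1} and the piecewise rigidity/Sobolev-approximation technique of \cite{OGrigidity,FJM-hierachy}, adapted to the mixed $L^2$--$L^p$ growth of \ref{W3} in the spirit of \cite{Velcic,shells}, I obtain maps $R_h\in W^{1,2}(\omega;SO(3))$ satisfying
\begin{equation*}
    \lVert \nabla_h y_h M_h - R_h\rVert_{L^2(\Omega)}^{2} + \lVert \nabla_h y_h M_h - R_h\rVert_{L^p(\Omega)}^{p} \leq C h^{\beta},\qquad \lVert \nabla' R_h\rVert_{L^2(\omega)} \leq C h^{\beta/2-1}.
\end{equation*}
After subtracting a constant rotation $Q_h\in SO(3)$ and a translation $c_h\in\mathbb{R}^3$, one has $Q_h^TR_h\to \mathrm{Id}$; Lemma \ref{L3.1} then allows $M_h$ to be replaced by $\mathrm{Id}$ up to order $h$. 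Standard manipulations of the in-plane and out-of-plane averages \eqref{3.23}--\eqref{3.24} produce $U_h^{\tilde y_h}\rightharpoonup u$ in $W^{1,2}(\omega;\mathbb{R}^2)$ and $V_h^{\tilde y_h}\to v$ in $W^{1,2}(\omega)$, while the control of $\nabla' R_h$ propagates to the third column and upgrades $v$ to $W^{2,2}(\omega)$.

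For the magnetization compactness I would invoke the geometric argument of \cite[Proposition 4.2]{BrescianiIncompressible}. The uniform convergence of $\tilde y_h$ to the identity, together with the solvability property \ref{D4} of the degree, shows that $\tilde y_h(\Omega)$ contains a cylinder $\omega_0\times(-ch,ch)$ for some $c>0$ and $\omega_0\Subset\omega$; on such cylinders the bound $h^{-1}\int_{\Omega^{y_h}}|\nabla m_h|^2\le C$ yields the usual dimension-reduction compactness for $\tilde m_h$. Composing with $\tilde y_h$, whose inverse is well controlled thanks to $\det\nabla y_h=\kappa_h^{-1}$, and exhausting $\omega$ by a family of such cylinders, the local limits glue into a single $\lambda\in W^{1,2}(\omega;\mathbb{S}^2)$ independent of $x_3$, with $\tilde m_h\circ\tilde y_h\to\lambda$ in every $L^r$, $r<\infty$. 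For the liminf, I would split into elastic, exchange and magnetostatic parts. Writing $\nabla_h y_h M_h = R_h(\mathrm{Id}+h^{\beta/2}G_h)$ on a set of measure tending to full, the Taylor expansion \eqref{3.9}, the incompressibility $\det\nabla_h y_h=\kappa_h^{-1}$ and the constrained-limit argument of \cite{ContiEstimate,LiChe12} produce $Q_2^{inc}$ after minimising out the transverse column. The curvature term $\nabla'v\otimes\nabla'\theta$ in $E^{el}$ appears from the first-order expansion $M_h=\mathrm{Id}+hA+O(h^2)$ in \eqref{4.8}, whose off-diagonal block couples $v$ with $\nabla'\theta$ when the symmetric part of $G_h$ is identified. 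The exchange term follows from Fatou on the same exhaustion; the magnetostatic energy is treated by the thin-film analysis of \cite{thinfilms} applied to the deformed cylinders, producing $\tfrac12\int_\omega|(\lambda)^3|^2$.

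The main obstacle is the rigidity step: extending Friesecke--James--Müller to the mixed $L^2$/$L^p$ growth while preserving incompressibility requires a careful splitting into a good set, where the $L^2$ estimate suffices and classical arguments apply, and a small bad set controlled by the $L^p$ estimate, with both pieces pieced together into a single $R_h$ obeying the announced scaling $h^{\beta/2-1}$ on $\nabla'R_h$. A secondary technical point is that every convergence statement must be transferred from the fictitious reference plate $\Omega$ back to the shallow shell, where the uniform smallness of $M_h-\mathrm{Id}$ and $\kappa_h-1$ from Lemma \ref{L3.1} is what makes the leading-order identifications consistent with the plate case of \cite{BrescianiIncompressible}.
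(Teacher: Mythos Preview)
Your proposal is correct and follows essentially the same route as the paper: pull back to $\Omega$ via the $C^1$-diffeomorphism $\Theta_h\circ z_h$, establish the approximation by rotations with the mixed $L^2$/$L^p$ bounds, deduce compactness of deformations as in \cite{Velcic}, obtain compactness of magnetizations by the degree-theoretic inner-set argument of \cite{BrescianiIncompressible}, and treat the three energy contributions separately for the liminf. Two small corrections: in the shallow-shell setting the comparison map for $\tilde y_h$ is $\Theta_h\circ z_h$ rather than the identity, so the inner set extracted by the degree argument is a shallow-shell piece $\Theta_h(z_h(\Omega^\varepsilon_\tau))\subseteq\Omega^{\tilde y_h}$, and only after the change of variables does one land on a straight cylinder $\Omega^\varepsilon_\tau$; and the incompressibility constraint reads $\det(\nabla_h y_h\,M_h)=1$, hence $\det\nabla y_h = h\kappa_h$, not $\kappa_h^{-1}$. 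Note also that the paper obtains the rigidity for both exponents simultaneously by observing that the optimal rotation in Theorem \ref{Thm 2.1} is independent of $p$, rather than via a good-set/bad-set decomposition; and the magnetostatic term is shown to \emph{converge} (not merely to satisfy a liminf), which is later reused for the recovery sequence.
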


\begin{theorem}
\label{thm2}
    (Recovery sequence)\\
    Assume $p > 3$ and $\beta > 2p$ and that the elastic energy density $W$ satisfies \ref{W1}--\ref{W5}. Then, for every $(u,v,\lambda) \in W^{1,2}(\omega;\mathbb{R}^2) \times W^{2,2}(\omega) \times W^{1,2}(\omega;\mathbb{S}^2)$, there exists a sequence of admissible states $((y_h,m_h))_h \subseteq \mathcal{Q}$ such that, as $h \to 0^+$:
    \begin{align*}
        &u_h := U_h^{y_h} \to u \ \ \text{in} \ W^{1,2}(\omega;\mathbb{R}^3);\\
         &v_h := V_h^{y_h} \to v \ \ \text{in} \ W^{1,2}(\omega));\\
         &m_h \circ y_h \to \lambda \ \ \text{in} \ L^r(\Omega;\mathbb{R}^3) \ \ \text{for every} \ 1 \leq r < \infty.
    \end{align*}
    Moreover, the following equality holds:
    \begin{equation*}
        \lim_{h\to 0^+}E_h(y_h,m_h) = E(u,v,\lambda).
    \end{equation*}
\end{theorem}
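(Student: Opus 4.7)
The plan is to adapt the construction from \cite[Proposition 5.1]{BrescianiIncompressible}, originally carried out for incompressible magnetoelastic plates, to the shallow-shell geometry, exploiting that $\Theta_h \circ z_h$ is a $C^1$-diffeomorphism (Proposition \ref{Prop1}). A first density reduction allows me to assume that the target triple $(u,v,\lambda)$ is smooth: standard mollification handles $u \in W^{1,2}$ and $v \in W^{2,2}$, while smooth approximation in $W^{1,2}(\omega;\mathbb{S}^2)$ is classical for the two-dimensional domain $\omega$. The continuity of $Q_2^{inc}$ in both arguments makes $E$ continuous along such approximations, and a standard diagonal argument reduces the theorem to $(u,v,\lambda) \in C^\infty(\overline{\omega};\mathbb{R}^2) \times C^\infty(\overline{\omega}) \times C^\infty(\overline{\omega};\mathbb{S}^2)$.

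For smooth data I would build the recovery deformation as a perturbation of the shallow-shell parametrization, combining the Kirchhoff-type ansatz of \cite{Velcic} with the incompressibility correction scheme of \cite{BrescianiIncompressible, LiChe12}:
\begin{equation*}
y_h(x) := \Theta_h \circ z_h(x) + h^{\frac{\beta}{2}-1}\, \Phi(x',x_3) + h^{\frac{\beta}{2}}\, \Psi(x',x_3) + \rho_h(x),
\end{equation*}
where $\Phi$ encodes the out-of-plane bending $v$ together with the associated Kirchhoff rotation of fibres normal to the midsurface, $\Psi$ encodes the in-plane displacement $u$ plus a correction $c_0(x') + x_3\, c_1(x')$ realizing pointwise the minima in the definition of $Q_2^{inc}(\mathrm{sym}(\nabla' u + \nabla' v \otimes \nabla'\theta),\lambda)$ and $Q_2^{inc}((\nabla')^2 v, \lambda)$, and $\rho_h = o(h^{\beta/2})$ in $C^1$ is a lower-order perturbation enforcing $\det \nabla y_h = \kappa_h$ exactly. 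The smooth selection of $c_0, c_1 \in C^\infty(\overline{\omega};\mathbb{R}^3)$ is possible by the implicit function theorem applied to the strictly convex form $Q_3$, and the correction $\rho_h$ is constructed by the fixed-point scheme of \cite{LiChe12,ContiEstimate}. Since $y_h$ is a $C^1$-perturbation of size $o(1)$ of the diffeomorphism $\Theta_h \circ z_h$, for $h$ small enough $\det \nabla y_h > 0$ holds and global injectivity of $y_h$ follows from \cite{CiraletElasticity}. The convergences $U_h^{y_h} \to u$ in $W^{1,2}(\omega;\mathbb{R}^2)$ and $V_h^{y_h} \to v$ in $W^{1,2}(\omega)$ then follow by direct substitution into Definition \ref{Def3.1}.

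For the magnetization I set $\tilde\lambda(x):=\lambda(x')$, extend it smoothly to a neighbourhood of $\overline{\Omega}$, and define $m_h := \tilde\lambda \circ y_h^{-1}$ on $\Omega^{y_h}$; the convergence $m_h \circ y_h = \tilde\lambda \to \lambda$ in $L^r(\Omega;\mathbb{R}^3)$ is then immediate. The three energy limits are computed separately. For $E^{el}_h$, a Taylor expansion of $W$ near $\mathrm{Id}$ (using frame-indifference \ref{W1} and \ref{W5}), together with the $L^\infty$-convergences $M_h \to \mathrm{Id}$ and $\kappa_h \to 1$ from Lemma \ref{L3.1} and the dominated convergence theorem, produces exactly the target quadratic form by construction of $c_0, c_1$. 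For $E^{exc}_h$, the change of variable $\xi = y_h(x)$ rewrites the integral as $\int_\Omega |\nabla \tilde\lambda\, \nabla y_h^{-1}|^2 |\det \nabla y_h|\, dx$; using $\nabla \tilde\lambda = (\nabla'\lambda, 0)$, $\nabla y_h^{-1} \to \mathrm{Id}$ and $|\det \nabla y_h|/h \to 1$, the rescaled limit is $\alpha \int_\omega |\nabla'\lambda|^2\, dx'$. For $E^{mag}_h$, since $\Omega^{y_h}$ is essentially a thin layer of thickness $O(h)$ over the shallow-shell midsurface, a Fourier analysis of the Maxwell equation as in \cite{thinfilms} combined with the geometric description of $\Omega^{y_h}$ already employed in the compactness part yields the magnetostatic limit $\frac{1}{2}\int_\omega |(\lambda)^3|^2\, dx'$.

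The main obstacle I expect is the simultaneous enforcement of the incompressibility constraint $\det \nabla y_h = \kappa_h$ and the optimality of $c_0, c_1$ for $Q_2^{inc}$: the trace-free side condition in the definition of $Q_2^{inc}$ is precisely the linearisation of $\det = 1$, so keeping the two compatible up to errors of order $o(h^\beta)$ in the elastic energy requires the lower-order correction $\rho_h$. Moreover, the curvature contribution $h A(x')$ entering $M_h$ (Proposition \ref{Prop1}) produces cross-terms between the $h^{\beta/2}$-corrections and the shallow-shell geometry; the emergence of the membrane term $\nabla' v \otimes \nabla'\theta$, absent for flat plates, is exactly what absorbs these cross-terms at leading order. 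Once this cancellation is verified, the recovery scheme of \cite{BrescianiIncompressible} goes through with mostly notational modifications.
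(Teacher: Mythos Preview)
Your proposal is correct and follows essentially the same strategy as the paper: reduce by density to smooth $(u,v,\lambda)$, build the recovery deformation as an explicit Kirchhoff-type perturbation of $\Theta_h\circ z_h$ with optimal transverse corrections realizing $Q_2^{inc}$, enforce incompressibility by a lower-order correction, obtain injectivity via \cite{CiraletElasticity}, define $m_h$ by pullback of $\lambda$, and compute the three energy limits separately.

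Two minor implementation differences are worth noting. For the incompressibility correction you invoke a general fixed-point scheme, whereas the paper (following \cite{LiChe12,BrescianiIncompressible}) uses the concrete one-dimensional device $y_h(x',x_3):=\overline{y}_h(x',\eta_h(x',x_3))$ with $\eta_h$ solving an explicit ODE in $x_3$; this gives the needed $C^1$-estimates of order $h^{\beta/2+1}$ directly and avoids any contraction argument. For the magnetostatic term you propose a Fourier computation as in \cite{thinfilms}, but the paper has already established in Theorem~\ref{Thm 6.3} the full \emph{convergence} (not merely a lower bound) $E_h^{mag}\to E^{mag}$ for any sequence with bounded energy, so the recovery sequence simply reuses that result rather than arguing again from scratch.
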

Analogously to \cite[Section 3.4]{BrescianiIncompressible}, we can state the two main results from Theorem \ref{thm1} and \ref{thm2} in terms of $\Gamma$-convergence. For this, we first introduce the functional $\mathcal{M}_h(y,m): \mathbb{R}^3 \to \mathbb{R}^3$ by setting
\begin{equation*}
    \mathcal{M}_h(y,m) := (\chi_{\Omega^{y}}m)\circ\Theta_h\circ z_h,
\end{equation*}
for $h > 0$ and admissible states $(y,m) \in \mathcal{Q}$.

Recalling \eqref{3.23} and \eqref{3.24}, we introduce the functionals
\begin{equation*}
    \mathcal{I}_h, \mathcal{I}: W^{1,p}(\Omega;\mathbb{R}^3) \times W^{1,2}(\omega;\mathbb{R}^2)\times W^{1,2}(\omega)\times L^2(\mathbb{R}^3;\mathbb{R}^3) \to \mathbb{R}^3\cup \{+\infty\},
\end{equation*}
given by
\begin{equation*}
    \mathcal{I}_h(y,u,v,\mu) := \begin{cases} 
      E_h(y,m) & \text{if} \ u = U_h^{y}, v = V_h^y \ \text{and} \ \mu = \mathcal{M}_h(y,m)\\
       & \text{for some} \ m \in W^{1,2}(\Omega^y;\mathbb{S}^2), \\
      +\infty & \text{otherwise}
   \end{cases}
\end{equation*}
and
\begin{equation*}
    \mathcal{I}(y,u,v,\mu) := \begin{cases} 
      E(u,v,\lambda) & \text{if} \ y = \Theta_0 \circ z_0, v \in W^{2,2}(\omega) \ \text{and} \ \mu = \chi_\Omega\lambda\\
       & \text{for some} \ \lambda \in W^{1,2}(\Omega^y;\mathbb{S}^2), \\
      +\infty & \text{otherwise}. 
   \end{cases}
\end{equation*}
Similarly to \cite[Corollary 3.1]{BrescianiIncompressible}, we obtain the following result:

\begin{corollary}
    ($\Gamma$-convergence)
    \label{col1}\\
    The functionals $(\mathcal{I}_h)$ $\Gamma$-converge to $\mathcal{I}$ with respect to the strong product topology, as $h \to 0^+$. Namely, the following holds:
    \begin{itemize}
        \item[(i)] (Liminf inequality) for every sequence $((y_h,u_h,v_h,\mu_h))_h \subseteq W^{1,p}(\Omega;\mathbb{R}^3) \times W^{1,2}(\omega;\mathbb{R}^2)\times W^{1,2}(\omega)\times L^2(\mathbb{R}^3;\mathbb{R}^3)$ such that $(y_h,u_h,v_h,\mu_h) \to (y,u,v,\mu)$ in $W^{1,p}(\Omega;\mathbb{R}^3) \times W^{1,2}(\omega;\mathbb{R}^2)\times W^{1,2}(\omega)\times L^2(\mathbb{R}^3;\mathbb{R}^3)$ for some $(y,u,v,\mu) \in W^{1,p}(\Omega;\mathbb{R}^3) \times W^{1,2}(\omega;\mathbb{R}^2)\times W^{1,2}(\omega)\times L^2(\mathbb{R}^3;\mathbb{R}^3)$, we have
        \begin{equation*}
            \mathcal{I}(y,u,v,\mu) \leq \underset{h \to 0^+}{\liminf} \ \mathcal{I}_h(y_h,u_h,v_h,\mu_h),
        \end{equation*}
        \item[(ii)] (Recovery sequence) for every $(y, u, v, \mu) \in W^{1,p}(\Omega;\mathbb{R}^3) \times W^{1,2}(\omega;\mathbb{R}^2)\times W^{1,2}(\omega)\times L^2(\mathbb{R}^3;\mathbb{R}^3)$ there exists a sequence $((y_h,u_h,v_h,\mu_h))_h \subseteq W^{1,p}(\Omega;\mathbb{R}^3) \times W^{1,2}(\omega;\mathbb{R}^2)\times W^{1,2}(\omega)\times L^2(\mathbb{R}^3;\mathbb{R}^3)$ such that $(y_h,u_h,v_h,\mu_h) \to (y,u,v,\mu)$ in $W^{1,p}(\Omega;\mathbb{R}^3) \times W^{1,2}(\omega;\mathbb{R}^2)\times W^{1,2}(\omega)\times L^2(\mathbb{R}^3;\mathbb{R}^3)$ and we have
        \begin{equation*}
            \mathcal{I}(y, u, v, \mu) = \underset{h \to 0^+}{\lim} \ \mathcal{I}_h(y_h, u_h, v_h, \mu_h).
        \end{equation*}
    \end{itemize}
    The same result holds for the weak product topology instead of the strong one.
\end{corollary}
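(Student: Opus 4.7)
The plan is to translate Theorems \ref{thm1} and \ref{thm2} directly into the $\Gamma$-convergence framework on the enlarged state space, following the template of \cite[Corollary 3.1]{BrescianiIncompressible}. The key observation is that $\mathcal{I}_h(y_h,u_h,v_h,\mu_h) < +\infty$ forces $u_h = U_h^{y_h}$, $v_h = V_h^{y_h}$, and $\mu_h = \mathcal{M}_h(y_h,m_h)$ for some admissible $m_h \in W^{1,2}(\Omega^{y_h};\mathbb{S}^2)$, so the $\Gamma$-convergence statement reduces, up to a compatibility check between the a priori convergences and those produced by Theorem \ref{thm1}, to the compactness, liminf inequality, and recovery sequence already established in the two theorems.

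For the liminf inequality (i), I would fix $(y_h,u_h,v_h,\mu_h) \to (y,u,v,\mu)$ in the strong product topology and assume $\liminf \mathcal{I}_h(y_h,u_h,v_h,\mu_h) < +\infty$, otherwise the inequality is trivial. Passing to a subsequence realizing the liminf, the structural constraint above holds and $E_h(y_h,m_h) \leq C$, so Theorem \ref{thm1} provides rigid motions $T_h$ and limits $(u',v',\lambda')$ of the modified displacements and magnetization with $E(u',v',\lambda') \leq \liminf E_h(y_h,m_h)$. The remaining task is to identify $(u',v',\lambda')$ with $(u,v,\lambda)$, where $\mu = \chi_\Omega\lambda$: this uses uniqueness of limits in $W^{1,2}(\omega;\mathbb{R}^2)$, $W^{1,2}(\omega)$, and $L^2(\mathbb{R}^3;\mathbb{R}^3)$, the frame-indifference of $E$, and the fact that $\Theta_h\circ z_h$ converges to the identity on $\Omega$ as $h \to 0^+$, which is what links $\mathcal{M}_h(y_h,m_h)$ to a magnetization on the reference cylinder.

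For the recovery sequence (ii), if $\mathcal{I}(y,u,v,\mu) = +\infty$ any constant sequence suffices; otherwise the definition of $\mathcal{I}$ forces $y = \Theta_0\circ z_0$, $v \in W^{2,2}(\omega)$, and $\mu = \chi_\Omega\lambda$ for some $\lambda \in W^{1,2}(\omega;\mathbb{S}^2)$. Theorem \ref{thm2} then supplies $(y_h,m_h) \in \mathcal{Q}$ with $U_h^{y_h} \to u$, $V_h^{y_h} \to v$, $m_h\circ y_h \to \lambda$ in $L^r$ for every $1 \leq r < \infty$, and $E_h(y_h,m_h) \to E(u,v,\lambda)$. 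I would then verify the remaining convergences $y_h \to \Theta_0\circ z_0$ in $W^{1,p}(\Omega;\mathbb{R}^3)$ and $\mathcal{M}_h(y_h,m_h) \to \chi_\Omega\lambda$ in $L^2(\mathbb{R}^3;\mathbb{R}^3)$ directly from the explicit construction in Theorem \ref{thm2}, where $y_h$ is built as a perturbation of the shallow-shell embedding and the deformed sets $\Omega^{y_h}$ exhaust a cylindrical neighbourhood of $\Omega$ in the sense of measure. The weak product topology version is a free consequence: the compactness from Theorem \ref{thm1} already provides weak convergence of all the relevant quantities, and the strong recovery sequence is trivially a weak one.

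The hardest part will be the identification step in (i): reconciling the rigid motions $(Q_h,c_h)$ from Theorem \ref{thm1} with the a priori unrotated convergences assumed in the corollary. Concretely, given the scalings in \eqref{3.23}--\eqref{3.24}, one must argue that any nontrivial $(Q_h,c_h)$ would be incompatible with the convergence of $U_h^{y_h}$ and $V_h^{y_h}$ in the prescribed norms, so that $(Q_h,c_h)$ is forced to be close to $(\mathrm{Id},0)$ modulo residual degrees of freedom that leave $E$ invariant by frame indifference; this then allows the liminf bound of Theorem \ref{thm1} to be transferred verbatim to the unrotated limits $(u,v,\lambda)$.
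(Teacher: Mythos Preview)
Your proposal is correct and follows the same template as the paper, which explicitly defers to \cite[Corollary 3.1]{BrescianiIncompressible} for the bulk of the argument and only spells out the shallow-shell-specific adaptation. There is, however, a tactical difference worth noting in the identification step of (i). You propose to control the rotations $(Q_h,c_h)$ produced by Theorem \ref{thm1} and argue they must be asymptotically trivial because the \emph{unrotated} displacements $U_h^{y_h},V_h^{y_h}$ are assumed to converge. The paper instead sidesteps this: it re-runs the compactness argument with a \emph{translation-only} rigid motion $\check{T}_h(\xi)=\xi-\check{c}_h$, where $\check{c}_h=\fint_\Omega(y_h-\Theta_h\circ z_h)\,dx$, obtains limits $(\check{u},\check{v},\check{\lambda})$ for $\check{y}_h=\check{T}_h\circ y_h$, and then shows these coincide with the given $(u,v,\lambda)$. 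The shallow-shell-specific content is the identification $\check{\lambda}=\lambda$, which is done via a change-of-variables computation: one writes $\int_{\mathbb{R}^3}\check{\mu}_h\cdot\overline{\varphi}\,\kappa_h\,dx$ in terms of $\mu_h$ composed with $(\Theta_h\circ z_h)^{-1}\circ\check{T}_h\circ\Theta_h\circ z_h$, uses $|\check{c}_h|\leq Ch^{\beta/2-1}$ and the uniform convergence $(\Theta_h\circ z_h)^{-1}\to\mathrm{Id}$ to pass to the limit, and concludes by testing against arbitrary $\varphi\in C_c^\infty(\omega;\mathbb{R}^3)$. Your route would also work but requires a separate argument pinning down $Q_h$; the paper's route avoids this by never introducing a rotation in the first place.
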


Corollary \ref{col1} presents a summary of Theorems \ref{thm1} and \ref{thm2} and will be proven at the very end of Section \ref{s:7}. It generally offers less information than the two main results, as the sequence of functionals $(\mathcal{I}_h)$ are not coercive. Consequently, the Fundamental Theorem of $\Gamma$-convergence is not applicable and the convergence of minimizers cannot be deduced.

%%%%%%%%%%%%%%%%%%%%%%%%%%%%%%%%%%%%%%%%%%%%%%%%%%%%%%%%%%%%%%%%%%%%

\section{Approximation by rotations and compactness}
\label{s:4}

\subsection*{Approximation by rotations}

This section is dedicated to the first crucial step for the proof of Theorem \ref{thm1}. The following approximation result for magnetoelastic shallow shells hinges on the rigidity estimate stated in Theorem \ref{Thm 2.1}. Before stating it, we observe that the rotation $R$ appearing in Theorem \ref{Thm 2.1} does not depend on the exponent $p$ (see \cite[Remark 2.3]{BrescianiIncompressible}). This immediately implies that for every $u \in W^{1,p}(\Omega;\mathbb{R}^3)$, we find a rotation $R \in SO(3)$ such that 
\begin{equation}
    \int_\Omega \abs{\nabla u(x) - R}^q dx \leq C(\Omega, q) K(u,\Omega)
    \label{4.18},
\end{equation}
for $q \in \{2,p\}$, whenever we set
\begin{equation*}
    K(u,\Omega) := \int_\Omega \text{dist}^2(\nabla u, SO(3)) \vee \text{dist}^p(\nabla u, SO(3))dx.
\end{equation*}

\textcolor{Green}{
Suppose that our energy sequence is equi-bounded, meaning that
\begin{equation*}
    E_h(y_h, m_h) \leq C, \ \ \ \forall \ h > 0.
\end{equation*}
Then, by the growth condition \ref{W3} there holds
\begin{equation}
   K_h(y_h,\Omega) \leq h^{\beta}E^{el}_h(y_h,m_h) \leq Ch^{\beta}.
   \label{4.19}
\end{equation}
}

In the next theorem we extend the approximation by rotations result obtained in \cite[Lemma 4.1]{BrescianiIncompressible} to the setting of magnetoelastic shallow shells. 

\begin{theorem} (Approximation by rotations) \\
\label{Thm4.1}
    Let $u_h \in W^{1,p}(\hat{\Omega}_h;\mathbb{R}^3) $ and assume that $h^{-p}K(u_h,\hat{\Omega}_h)$ is sufficiently small, for $h \ll 1$.\\
    Then, there exist matrix fields $R_h \in W^{1,p}(\hat{\omega}_h;\mathbb{R}^{3\times 3}) $ such that $R_h(\pi_h(Z)) \in SO(3)$ for all $Z \in \hat{\Omega}_h$, and matrices $Q_h \in SO(3)$ with the following properties:
    \begin{align*}
        &(i) \ \norm{\nabla u_h - R_h\pi_h}_{L^q(\hat{\Omega}_h;\mathbb{R}^{3 \times 3})}^q \leq CK(u_h,\hat{\Omega}_h),\\
        &(ii) \ \norm{ \hat{\nabla} R_h }_{L^q(\hat{\omega}_h;\mathbb{R}^{3 \times 3})}^q \leq Ch^{-q-1}K(u_h,\hat{\Omega}_h),\\
        &(iii) \ \norm{R_h - Q_h}_{L^q(\hat{\omega}_h;\mathbb{R}^{3 \times 3})}^q \leq Ch^{-q-1}K(u_h,\hat{\Omega}_h),
    \end{align*}
    where the constant $C$ is independent of $u_h$ and $h$.
\end{theorem}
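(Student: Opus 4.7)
The strategy is to reduce to the analogous statement for plates obtained in \cite[Lemma 4.1]{BrescianiIncompressible}, exploiting the fact that $\Theta_h \circ z_h$ is a $C^1$-diffeomorphism between $\Omega$ and $\hat{\Omega}_h$ whose differential is close to the identity uniformly in $h$ (Proposition \ref{Prop1}). Set $y_h := u_h \circ \Theta_h \circ z_h \in W^{1,p}(\Omega;\mathbb{R}^3)$. By the chain rule $(\nabla u_h)\circ \Theta_h \circ z_h = \nabla_h y_h\, M_h$, and since $\|M_h - \mathrm{Id}\|_{L^\infty} \le Ch$ and $\kappa_h \in [C_2,C_1]$ by Lemma \ref{L3.1}, a change of variables yields
\begin{equation*}
    K(u_h,\hat{\Omega}_h) \sim h \int_\Omega \left[\mathrm{dist}^2(\nabla_h y_h, SO(3)) \vee \mathrm{dist}^p(\nabla_h y_h, SO(3))\right] dx
\end{equation*}
up to an $O(h)$ correction absorbed by the smallness hypothesis. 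Thus the statement becomes equivalent to its plate counterpart formulated for the rescaled gradient $\nabla_h y_h$ on the cylinder $\Omega$.

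\emph{Plate-level argument.} Cover $\omega$ by a family of overlapping squares $\{S_{h,k}\}$ of side $h$ with bounded overlap. On each scaled cube $S_{h,k}\times(-1/2,1/2)$ apply Theorem \ref{Thm 2.1} to $y_h$ to produce rotations $\tilde{R}_{h,k} \in SO(3)$ which, thanks to the $p$-independence of $R$ recalled in \eqref{4.18}, simultaneously satisfy, for $q \in \{2,p\}$,
\begin{equation*}
    \int_{S_{h,k}\times(-1/2,1/2)} |\nabla_h y_h - \tilde{R}_{h,k}|^q\, dx \le C \int_{S_{h,k}\times(-1/2,1/2)} \left[\mathrm{dist}^2(\nabla_h y_h, SO(3)) \vee \mathrm{dist}^p(\nabla_h y_h, SO(3))\right] dx,
\end{equation*}
with $C$ scale-invariant by the remark in Theorem \ref{Thm 2.1}. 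The differences $|\tilde{R}_{h,k} - \tilde{R}_{h,k'}|$ on overlapping cubes are controlled by bounding both rotations against $\nabla_h y_h$; the assumption $h^{-p}K(u_h,\hat{\Omega}_h) \ll 1$ ensures these differences are small enough that the pointwise projection onto $SO(3)$ of a partition-of-unity mollification at scale $h$ is well defined, yielding a map $\bar{R}_h \colon \omega \to SO(3)$. Summing the local estimates and applying Poincar\'e on $\omega$ produce the plate-level counterparts of (i)--(iii) with a constant rotation $Q_h \in SO(3)$. This is exactly the strategy of \cite[Lemma 4.1]{BrescianiIncompressible}, adapted from \cite{OGrigidity} to the mixed growth \ref{W3}.

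\emph{Return to the shell and main obstacle.} Defining $R_h$ on $\hat{\omega}_h$ by $R_h(\Theta_h(z_h(x',0))) := \bar{R}_h(x')$ and using \eqref{4.6}--\eqref{4.8} to reverse the change of variables from the first step, (i)--(iii) follow from their plate counterparts; the $O(h)$ perturbations produced by $M_h \neq \mathrm{Id}$ and by the discrepancy between $\nabla' \bar{R}_h$ and the surface gradient $\hat{\nabla} R_h$ are absorbed into the estimates, again thanks to the smallness assumption. The principal subtlety is the mixed $L^2 \vee L^p$ growth: both estimates must be obtained with compatible constants simultaneously, and — critically — the gluing step requires adjacent rotations $\tilde{R}_{h,k}$ to stay close enough for the projection onto $SO(3)$ to be unambiguous. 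This is exactly the role of $h^{-p}K(u_h,\hat{\Omega}_h)\ll 1$; once it is controlled, the shallow-shell geometry is a perturbative correction to the plate case and introduces no genuinely new analytical difficulty.
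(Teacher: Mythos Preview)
Your reduction to the plate case has a genuine gap in the ``Return to the shell'' step. After pulling back you have $(\nabla u_h)\circ\Theta_h\circ z_h = \nabla_h y_h\, M_h$, but the plate lemma from \cite{BrescianiIncompressible} applies to the honest rescaled gradient $\nabla_h y_h$, not to $\nabla_h y_h M_h$ (which is not the gradient of anything). You therefore control $\|\nabla_h y_h - \bar R_h\|_{L^q}^q$ by $\tilde K := \int_\Omega[\mathrm{dist}^2\vee\mathrm{dist}^p](\nabla_h y_h)\,dx$, and must then compare both $\tilde K$ with $K_h := \int_\Omega[\mathrm{dist}^2\vee\mathrm{dist}^p](\nabla_h y_h M_h)\,dx$ and $|\nabla_h y_h M_h - \bar R_h|$ with $|\nabla_h y_h - \bar R_h|$. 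Since $|M_h-\mathrm{Id}|\le Ch$, each comparison produces an \emph{additive} error (from the term $\bar R_h(M_h-\mathrm{Id})$, which has fixed size $\sim h$), and after changing variables back one obtains at best
\[
\|\nabla u_h - R_h\pi_h\|_{L^q(\hat\Omega_h)}^q \le CK(u_h,\hat\Omega_h) + Ch^3.
\]
The smallness hypothesis $h^{-p}K(u_h,\hat\Omega_h)\ll 1$ with $p>3$ is exactly the regime $K\ll h^3$, so the extra $Ch^3$ cannot be absorbed into $K$; the smallness assumption works in the wrong direction here. (In the application $K\sim h^{\beta+1}$ with $\beta>2p>6$, so the loss is fatal for the subsequent Corollary~\ref{Cor4.2}.) Even exploiting that the leading part $hA$ of $M_h-\mathrm{Id}$ is antisymmetric --- so that $\bar R_h e^{hA}\in SO(3)$ could serve as a corrected rotation field on $\omega$ --- only improves the additive error to $O(h^5)$, which still dominates $K$.

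The paper avoids this by running the entire rigidity-and-mollification argument \emph{directly on the shell} $\hat\Omega_h$, where $\nabla u_h$ is a genuine gradient and no $M_h$ correction ever appears. The diffeomorphism $\Theta_h\circ z_h$ is used only to certify that the local curved cylinders $B_{X,h}:=\pi_h^{-1}(B_h(X)\cap\hat\omega_h)\cap\hat\Omega_h$ are uniformly bi-Lipschitz images of a fixed domain, so that the rigidity constant from Theorem~\ref{Thm 2.1} is uniform in $h$ and $X$. The remaining steps (smoothing via a cut-off $\eta_X$, projection onto $SO(3)$, covering with bounded overlap, Poincar\'e for (iii)) are then structurally identical to the plate case. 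In short: the direction of the transfer matters --- shell $\Rightarrow$ plate is lossless (this is how the paper obtains Corollary~\ref{Cor4.1}), but plate $\Rightarrow$ shell is not.
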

\begin{proof}

\textbf{Step 1} (Applying the Rigidity Estimate locally):

Consider a ball $B_h(X) \subseteq \mathbb{R}^3$ with radius $h$ centered at $X \in \hat{\omega}_h$ and set
    \begin{equation*}
        \begin{aligned}
            D_{X,h} := B_h(X) \cap \hat{\omega}_h\\
            B_{X,h} := \pi^{-1}_h(D_{X,h}) \cap \hat{\Omega}_h.
        \end{aligned}
    \end{equation*}
The cylinders $B_{X,h}$ are open and bounded domains with Lipschitz boundaries. This allows us to apply the Rigidity Estimate \eqref{4.18} to find rotations $R_{X,h}$ such that
    \begin{equation}
        \int_{B_{X,h}} \abs{\nabla u_h(y) - R_{X,h}}^q dy \leq C K(u_h,B_{X,h}).
        \label{4.23}
    \end{equation}
By the fact that $\Theta_h \circ z_h$ is a $C^1$-Diffeomorphism, we know that $\hat{\Omega}_h$ is bi-Lipschitz equivalent to $\Omega_h$. As a consequence, all cylinders $B_{X,h}$ are bi-Lipschitz equivalent to each other for every sufficiently small $h >0$. Hence we can apply Theorem \ref{Thm 2.1} to infer that the constant $C$ in the previous estimate can be chosen independently from $h$ and $x$.

    \textbf{Step 2} (Cut-off function):

Take a function $\rho \in \mathcal{C}_c^\infty(\mathbb{R};[0,1])$ with supp$(\rho) \subseteq (-\frac{1}{2},\frac{1}{2})$, and $\int_{\mathbb{R}}\rho = 1$ which is equal to 1 in a small neighbourhood of 0. For $X \in \hat{\omega}_h$ define the function $\eta_X: \hat{\omega}_h \to \mathbb{R}$ by setting
    \begin{equation*}
        \eta_X(Z) = \frac{\rho(\abs{\pi_h Z - X}/h)}{\int_{\hat{\Omega}_h}\rho(\abs{\pi_h Y - X}/h) \ dY}.
    \end{equation*}
    
   Then, $\eta_X$ satisfies the following:
    \begin{align}
        &\text{For} \ Z \notin B_{X,h}, \ \ \eta_X(Z) = 0,
        \label{4.25}\\
        &\int_{\hat{\Omega}_h}\eta_X(Z) \ dZ = 1,
        \label{4.26}\\
        &\norm{\eta_X}_{L^\infty(\hat{\Omega}_h)} \leq Ch^{-3},
        \label{4.27}\\
        &\norm{\nabla\eta_X}_{L^\infty(\hat{\Omega}_h;\mathbb{R}^3)} \leq Ch^{-4}.
        \label{4.28}
    \end{align}
Property \eqref{4.25} follows from the fact that $Z \notin B_{X,h}$ implies $\abs{\pi_h Z - X}/h > 1$ and consequently there holds $\rho(\abs{\pi_h Z - X}/h) = 0$. Condition \eqref{4.27} can be shown by using $\norm{\rho}_{L^\infty(\mathbb{R})} = 1$ and applying the change of variables $\Tilde{Y} = (Y - X)/h$. The argument for \eqref{4.28} is analogous with the addition of using $\norm{\nabla_x\rho}_{L^\infty(\mathbb{R})} \leq Ch^{-1}$.

    \textbf{Step 3} (Global Estimate):

Consider the matrix field $\Tilde{R} \in W^{1,p}(\hat{\omega}_h;\mathbb{R}^{3\times 3}) $ given by
    \begin{equation*}
        \Tilde{R}(X) = \int_{\hat{\Omega}_h}\eta_X(Z)\nabla u(Z) \ dZ.
    \end{equation*}

Using \eqref{4.25}, Hölder inequality together with \eqref{4.26} and the local rigidity estimate \eqref{4.23}, we obtain:
    \begin{equation}
        \begin{aligned}
            \abs{\Tilde{R}(X) - R_{X,h}}^q &= \abs{\int_{B_{X,h}}\eta_X(Z)\nabla u(Z) \ dz - R_{X,h}\int_{B_{X,h}}\eta_X(Z) \ dZ }^q\\
        &= \abs{\int_{B_{X,h}}\eta_X(Z)\big(\nabla u(Z) - R_{X,h}\big) \ dZ}^q\\
        &\leq \norm{\eta_X}_{L^\infty(\hat{\Omega}_h)}^q\mathcal{L}^3(B_{X,h})^{q-1} \int_{B_{X,h}}\abs{\nabla u (Z) - R_{X,h}}^q \ dz\\
        &\leq Ch^{-3} K(u_h,B_{X,h}).
        \end{aligned}
        \label{4.30}
    \end{equation}

Similarily, using in addition \eqref{4.28}, we get the following estimate:
    \begin{equation}
        \begin{aligned}
            \abs{\hat{\nabla} \Tilde{R}(X)}^q &= \abs{\nabla_X\bigg(\int_{\hat{\Omega}_h}\eta_X(Z)\nabla u(Z) \ dZ\bigg)}^q\\
        &= \abs{\int_{B_{X,h}}(\nabla_X\eta_X (Z)) \big(\nabla u(Z) - R_{X,h}\big) \ dZ}^q\\
        &\leq \norm{\nabla_X\eta_X}^q_{L^\infty(\hat{\Omega}_h)}\mathcal{L}^3(B_{X,h})^{q-1}\int_{B_{X,h}}\abs{\nabla u(Z) - R_{X,h}}^q \ dZ\\
        &\leq Ch^{-q-3}K(u_h,B_{X,h}).
        \end{aligned}
        \label{4.31}
    \end{equation}
For any $\Tilde{X} \in B_{X,h} \cap \hat{\omega}_h$, using the Fundamental Theorem of Calculus, by setting $\gamma_h(t):= \Theta_h(x' + t(\Tilde{x}' - x'),0)$ for $t \in [0,1]$, we infer
 \begin{align*}
        \abs{\Tilde{R}(\Tilde{X}) - \Tilde{R}(X)} &= \abs{\int_0^1 \frac{d}{dt} \Tilde{R}(\gamma_h(t)) \ dt}\\
        &= \abs{\int_0^1 \nabla\Tilde{R}(\gamma_h(t)) \cdot \dot{\gamma}_h(t) \ dt}.
    \end{align*}

    By the Mean Value Theorem, we find $\tau \in [0,1]$ such that
    \begin{equation*}
        \abs{\int_0^1 \nabla\Tilde{R}(\gamma_h(t)) \cdot \dot{\gamma}_h(t)  \ dt} \leq  \text{length}(\gamma_h) \ \abs{\nabla\Tilde{R}(\gamma_h(\tau))}.
    \end{equation*}

    Since $ \text{length}(\gamma_h) \leq |x'-\tilde{x}'| + h \text{Lip}(\theta) \leq Ch$, for some $C > 0$, writing $Y_h := \gamma_h(\tau)$ and using \eqref{4.31}, we deduce
    \begin{equation}
        \begin{aligned}
            \abs{\Tilde{R}(\Tilde{X}) - \Tilde{R}(X)}^q &\leq C h^q \abs{\hat{\nabla}\Tilde{R}(Y_h)}^q\\
        &\leq C h^{-3}K(u_h, B_{Y_h,h})\\
        &\leq C h^{-3}K(u_h, B_{X,2h}),
        \end{aligned}
        \label{4.32}
    \end{equation}
    
    Combining \eqref{4.23}, \eqref{4.30} and \eqref{4.32} results in
    \begin{equation}
        \begin{aligned}
            &\int_{B_{X,h}}\abs{\nabla u(Z) - \Tilde{R}(\pi_h(Z))}^q \ dZ\\
        &\leq \int_{B_{X,h}}\bigg(\abs{\nabla u(Z) - R_{X,h}} + \abs{\Tilde{R}(X) - R_{X,h}} + \abs{\Tilde{R}(\pi_h(Z)) - \Tilde{R}(X)}\bigg)^q \ dZ\\
        &\leq CK(u_h, B_{X,2h}).
        \end{aligned}
        \label{localrigidity}
    \end{equation}

    \textbf{Step 4} (Covering):

Consider a square $Q = [-a,a]^2 \subseteq \mathbb{R}^3$, for some $a > 0$ such that $\omega \subseteq Q$ as well as the equidistant partition $(\frac{n}{2^i})_{n=0}^{2^i} \subseteq [0,1]$ with distance $\frac{1}{2^i}$ for some $i \in \mathbb{N}$. Using the affine transformation $\varphi: [0,1] \to [-a,a]$, defined as
    \begin{equation*}
            \varphi(x) := -a(1-2x),
    \end{equation*}

     with $\{\varphi(\frac{n}{2^i})\}_n$ we get an equidistant partition of $[-a,a]$ with distance $\frac{2a}{2^i}$. With this partition we can define a grid of equidistant points $(x'_n)_{n=0}^{N_i}$ with distance $\frac{2a}{2^i}$ to their neighbors on $Q$. Define now $D_{r_i}(x'_n)$ which are discs with radius $r_i = \frac{4a}{2^i}$ around $x'_n$. By construction it holds that $\omega \subseteq\bigcup_{n = 0}^{N_i}D_{r_i}(x'_n)$.  Now suppose that $\frac{2a}{2^i} \sim h$, or to be more precise $i := \bigl \lceil \log_2(\frac{2a}{h})\bigr \rceil$. Then, for all points $x' \in \omega$ the covering number of the discs is independent of $h$. As the map $\Theta_h \circ z_h$ is bijective this implies that the covering of $\hat{\omega}_h$ by $\Theta_h\circ z_h (D_{r_i}(x'_n))$ is again independent of $h$. As the map $\pi_h$ is well defined, the independence is preserved if we consider $B_{X_n,h} := \pi_h^{-1}(\Theta_h\circ z_h (D_{r_i}(x'_n))) \cap \hat{\Omega}_h$.
     
     This implies the existence of a uniform constant $C$ which does not depend on $h$ (or $i$) such that by summing up the local rigidity estimates from \eqref{localrigidity}, we get
    \begin{align*}
        \int_{\hat{\Omega}_h}\abs{\nabla u_h - \Tilde{R}\pi_h}^q \ dX \leq C\sum_{n=1}^{N_i}K(u_h,  B_{X_n,2h}).
    \end{align*}

   \textcolor{Green}{By definition we have $\bigcup_{n = 1}^{N_i}B_{X_n,2h} = \hat{\Omega}_h$ and therefore}
    \begin{align*}
        \sum_{n=1}^{N_i}K(u_h, B_{X_n,2h}) = K(u_h,\hat{\Omega}_h),
    \end{align*}
    such that
    \begin{equation}
        \int_{\hat{\Omega}_h}\abs{\nabla u_h - \Tilde{R}\pi_h}^q \leq C K(u_h, \hat{\Omega}_h).
        \label{4.34}
    \end{equation}
    
    Integrating $\abs{\nabla \Tilde{R}}^q$ over $\pi_h(B_{X_n,h})$ and using \eqref{4.31} results in
    \begin{align*}
        \int_{\pi_h(B_{X_n,h})}\abs{\hat{\nabla} \Tilde{R}}^q \ dX &\leq \mathcal{L}^2(\pi_h(B_{X_n,h})) \max_{Y \in \pi_h(B_{X_n,h})}\abs{\hat{\nabla} \Tilde{R}(Y)}^q\\
        &\leq Ch^{2}h^{-q-3}\max_{Y \in \pi_h(B_{X_n,h})}\int_{B_{Y,h}}\text{dist}^p(\nabla u_h, SO(3)) \vee \text{dist}^2(\nabla u_h, SO(3)) \ dX\\
        &\leq Ch^{-q-1}\int_{B_{X_n,2h}}\text{dist}^p(\nabla u_h, SO(3)) \vee \text{dist}^2(\nabla u_h, SO(3)) \ dX\\
        &= Ch^{-q-1}K(u_h,B_{X_n,2h}).
    \end{align*}
    
    Using $\bigcup_{n = 1}^{N_i}\pi_h(B_{X_n,h}) = \hat{\omega}_h $, we deduce (ii).

    \textbf{Step 5} (Showing (i)):

    From \eqref{4.30} it follows that
    \begin{equation*}
        \text{dist}^q(\Tilde{R}(X), SO(3)) \leq Ch^{-3}K(u_h(X), \hat{\Omega}_h).
    \end{equation*}
    \textcolor{Green}{By \eqref{4.19} we know that $h^{-3}K(u, \hat{\Omega}_h)$ is sufficiently small}, which implies that the orthogonal projection of $\Tilde{R}$ onto $SO(3)$
    \begin{equation*}
        R := \Pi_{SO(3)}\Tilde{R}.
    \end{equation*}
     is well defined.

    Since $\Tilde{R} \in W^{1,p}(\hat{\omega}_h;\mathbb{R}^{3\times 3}) $ and $\Pi_{SO(3)}$ is Lipschitz, it follows that $R \in W^{1,p}(\hat{\omega}_h;SO(3))$.

   As a next step, we show
    \begin{equation}
        \int_{\hat{\Omega}_h}\text{dist}^q(\Tilde{R}\circ\pi_h, SO(3)) \ dX \leq \int_{\hat{\Omega}_h}\text{dist}^q(\nabla u_h, SO(3)) \ dX.
        \label{4.35}
    \end{equation}

    Using the fact that $\bigcup_{n = 1}^{N_i}B_{X_n,2h} = \hat{\Omega}_h $ and
    \begin{equation*}
        \abs{\Tilde{R}(\pi_h(X)) - \Pi_{SO(3)}(\Tilde{R}(\pi_h(X)))}^q \leq \abs{\Tilde{R}(\pi_h(X)) - R_{x_i,h}}^q,
    \end{equation*}
    
     we get
    \begin{align*}
        & \ \ \int_{\hat{\Omega}_h}\text{dist}^q(\Tilde{R} (\pi_h(X)), SO(3)) \ dX = \int_{\hat{\Omega}_h}\abs{\Tilde{R}(\pi_h(X)) - \Pi_{SO(3)}\Tilde{R}(\pi_h(X))}^q \ dX\\
        &\leq C \sum_{n = 1}^{N_i} \int_{B_{X_n,h}}\abs{\Tilde{R}(\pi_h(X)) - R_{X_n,h}}^q \ dX\\
        &= C \sum_{n = 1}^{N_i} \int_{B_{X_n,h}}\abs{\int_{\hat{\Omega}_h}\eta_X(Z)\nabla u_h(Z) \ dZ - R_{X_n,h}}^q \ dX\\
        &= C \sum_{n = 1}^{N_i} \int_{B_{X_n,h}}\abs{\int_{\hat{\Omega}_h}(\nabla u_h(Z)  - R_{X_n,h})\eta_X(Z) \ dZ}^q \ dX.
    \end{align*}

    We now use the fact that $\eta_X(Z) = 0$ for $Z \notin B_{X,h}$ and Jensen's inequality for the function $f \mapsto \abs{f}^q$. Note also that since $X \in B_{X_n,h}$ it follows that $B_{X,h} \subseteq B_{X_n,2h}$. Together this yields
    \begin{align*}
        &\sum_{n = 1}^{N_i} \int_{B_{X_n,h}}\abs{\int_{\hat{\Omega}_h}(\nabla u_h(Z)  - R_{X_n,h})\eta_X(Z) \ dZ}^q \ dX\\
        &\leq  \sum_{n = 1}^{N_i} \int_{B_{X_n,h}}\bigg(\int_{B_{X,h}}\abs{\nabla u_h(Z) - R_{X_n,h}}^q \eta_X(Z) \ dZ\bigg) \ dX\\
        &\leq \sum_{n = 1}^{N_i} \int_{B_{X_n,h}}\bigg(\int_{B_{X_n,2h}}\abs{\nabla u_h(Z) - R_{X_n,h}}^q \eta_X(Z) \ dZ\bigg) \ dX.
    \end{align*}

    Using Fubini's theorem together with the fact that
    \begin{align*}
        \abs{\int_{B_{X_n,2h}} \eta_X(Z) \ dX} \leq \norm{\eta_X}_{L^\infty(\hat{\Omega}_h)}\mathcal{L}^3(B_{X_n,2h}) \leq C,
    \end{align*}
   by the Rigidity Estimate from Proposition \eqref{4.18}, we conclude that:
    \begin{align*}
        &C \sum_{n = 1}^{N_i} \int_{B_{X_n,h}}\bigg(\int_{B_{X_n,2h}}\abs{\nabla u_h(Z) - R_{X_n,h}}^q \eta_X(Z) \ dZ\bigg) \ dX\\
        &= C \sum_{n = 1}^{N_i} \int_{B_{X_n,2h}} \abs{\nabla u_h(Z) - R_{X_n,h}}^q \bigg(\int_{B_{X_n,h}} \eta_X(Z) \ dX\bigg) \ dZ\\
        &\leq C \sum_{n = 1}^{N_i} \int_{B_{X_n,2h}}\text{dist}^q(\nabla u(Z)_h, SO(3)) \ dZ\\
        &\leq \Tilde{C}\int_{\hat{\Omega}_h}\text{dist}^q(\nabla u_h(Z), SO(3)) \ dZ,
    \end{align*}
    as desired. The constant $\Tilde{C}$ depends on the maximal covering number of $\hat{\Omega}_h$ by the family $(B_{X_n,2h})_{n = 1}^{N_i}$.

    Using \eqref{4.34} together with \eqref{4.35}, it follows that
    \begin{align*}
        \int_{\hat{\Omega}_h}\abs{\nabla u_h - R\pi_h}^q \ dX &\leq C\bigg(\int_{\hat{\Omega}_h}\abs{\nabla u_h - \Tilde{R}\pi_h}^q \ dX + \int_{\hat{\Omega}_h}\abs{R\pi_h -\Tilde{R}\pi_h}^q \ dX\bigg)\\
        &= C\bigg(\int_{\hat{\Omega}_h}\abs{\nabla u_h - \Tilde{R}\pi_h}^q \ dX + \int_{\hat{\Omega}_h}\text{dist}^q(\Tilde{R}\pi_h, SO(3)) \ dX\bigg)\\
        &\leq C\bigg(\int_{\hat{\Omega}_h}\abs{\nabla u_h - \Tilde{R}\pi_h}^q \ dX + \int_{\hat{\Omega}_h}\text{dist}^q(\nabla u_h, SO(3)) \ dX\bigg)\\
        &\leq C K(u_h, \hat{\Omega}_h),
    \end{align*}
    namely we deduce (i).

\textbf{Step 6} (Showing (iii)):

First an auxiliary matrix $\Tilde{Q}$ is defined as the average of $R$ over $\hat{\omega}_h$:
\begin{equation*}
    \Tilde{Q} := \fint_{\hat{\omega}_h}R(X) \ dX.
\end{equation*}
By the Poincaré inequality and (ii) it follows that
\begin{equation}
    \int_{\hat{\omega}_h}\abs{R(X) - \Tilde{Q}}^q \ dX \leq C \int_{\hat{\omega}_h}\abs{\hat{\nabla} R(X)}^q \ dX \leq Ch^{-q-1}K(u_h,\hat{\Omega}_h).
    \label{4.37}
\end{equation}
Arguing as in Step 5, we set $Q := \Pi_{SO(3)}\Tilde{Q}$, which is well defined due to the previous estimate. Since for any $X \in \hat{\omega}_h$ it holds that
\begin{equation*}
     \abs{\Tilde{Q} - Q} = \text{dist}(\Tilde{Q}, SO(3)) \leq \abs{\Tilde{Q} - R(X)},
\end{equation*}
by \eqref{4.37} we obtain
\begin{align*}
    \int_{\hat{\omega}_h}\abs{R(X) - Q}^q \ dX &\leq \int_{\hat{\omega}_h}\abs{R(X) - \Tilde{Q}}^q + \abs{\Tilde{Q} - Q}^q \ dX\\
    &\leq 2 \int_{\hat{\omega}_h}\abs{R(X) - \Tilde{Q}}^q \ dX\\
    &\leq Ch^{-q-1}K(u_h,\hat{\Omega}_h),
\end{align*}
which concludes the proof.
\end{proof}

By the change of variables from \eqref{3.17}, we infer the following Corollary of Theorem \ref{Thm4.1}:

\begin{corollary}
\label{Cor4.1}
    Let $y_h := u_h \circ \Theta_h \circ z_h \in W^{1,p}(\Omega;\mathbb{R}^3)$ and suppose that $h^{-p}K(u_h,\hat{\Omega}_h)$ is sufficiently small. Set
    \begin{equation*}
        K_h(y_h,\Omega) := \int_\Omega \text{dist}^2(\nabla_h y_h M_h, SO(3)) \vee \text{dist}^p(\nabla_h y_h M_h, SO(3)) \ dx.
    \end{equation*}
    Then, there exist matrix fields $R_h \in W^{1,p}(\omega;\mathbb{R}^{3\times 3}) $ such that $R_h(x') \in SO(3)$ for all $x' \in \omega$, and matrices $Q_h \in SO(3)$ with the following properties:
    \begin{align}
        &(i) \ \norm{\nabla_h y_h M_h - R_h}_{L^q(\Omega;\mathbb{R}^{3\times 3})}^q \leq \textcolor{Green}{CK_h(y_h,\Omega)},
        \label{4.44}\\
        &(ii) \ \norm{\hat{\nabla} R_h}_{L^q(\omega;\mathbb{R}^{3\times 3})}^q \leq C\textcolor{Green}{h^{-q}}K_h(y_h,\Omega),
        \label{4.45}\\
        &(iii) \ \norm{R_h - Q_h}_{L^q(\omega;\mathbb{R}^{3\times 3})}^q \leq C\textcolor{Green}{h^{-q}}K_h(y_h,\Omega),
        \label{4.46}
    \end{align}
    where the constant $C$ is independent of $y_h$ and $h$.
\end{corollary}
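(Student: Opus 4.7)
The plan is to apply Theorem~\ref{Thm4.1} directly to $u_h$ on $\hat\Omega_h$ and then transfer the three estimates to the fixed reference domain $\Omega$ via the change of variables $X=\Theta_h\circ z_h(x)$, whose Jacobian equals $h\kappa_h$. Since $h^{-p}K(u_h,\hat\Omega_h)$ is sufficiently small, Theorem~\ref{Thm4.1} provides a matrix field $\tilde R_h\in W^{1,p}(\hat\omega_h;\mathbb{R}^{3\times 3})$ with values in $SO(3)$ and a rotation $Q_h\in SO(3)$ satisfying (i)--(iii) of that theorem. I will then define
\[
R_h(x'):=\tilde R_h(\Theta_h(x',0))\qquad\text{for }x'\in\omega,
\]
that is, the pull-back of $\tilde R_h$ through the smooth parameterization $x'\mapsto(x',h\theta(x'))$ of the midsurface $\hat\omega_h$. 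Because that parameterization is bilipschitz with constants uniform in $h$, we get $R_h\in W^{1,p}(\omega;\mathbb{R}^{3\times 3})$ with values in $SO(3)$, which is the object required by the statement.

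For \eqref{4.44} I will use the chain-rule identity $(\nabla u_h)\circ\Theta_h\circ z_h=\nabla_h y_h M_h$ already recorded in Section~\ref{s:3} together with the projection identity $\pi_h(\Theta_h\circ z_h(x))=\Theta_h(x',0)$, so that $\tilde R_h\circ\pi_h\circ\Theta_h\circ z_h=R_h$. The volume change of variables then yields
\[
\int_{\hat\Omega_h}\abs{\nabla u_h-\tilde R_h\pi_h}^q\,dX=h\int_\Omega\abs{\nabla_h y_h M_h-R_h}^q\kappa_h\,dx,
\]
and analogously $K(u_h,\hat\Omega_h)$ equals $h$ times the corresponding integral of $\mathrm{dist}^2\vee\mathrm{dist}^p$ on $\Omega$ weighted by $\kappa_h$. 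Since Lemma~\ref{L3.1} gives $C_2\le\kappa_h\le C_1$, we obtain $K(u_h,\hat\Omega_h)\sim h\,K_h(y_h,\Omega)$, and cancelling the common factor of $h$ from estimate (i) of Theorem~\ref{Thm4.1} delivers \eqref{4.44}.

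For \eqref{4.45} and \eqref{4.46} I will run the same argument with the surface change of variables on $\hat\omega_h$. The map $x'\mapsto(x',h\theta(x'))$ has area element $\sqrt{1+h^2\abs{\nabla'\theta}^2}\sim 1$ and a uniformly invertible tangent frame, so the planar gradient $\hat\nabla R_h$ and the tangential gradient $\hat\nabla\tilde R_h$ are comparable in $L^q$ after pull-back, and similarly for $R_h-Q_h$ versus $\tilde R_h-Q_h$. Combined with the relation $K(u_h,\hat\Omega_h)\le Ch\,K_h(y_h,\Omega)$, estimates (ii) and (iii) of Theorem~\ref{Thm4.1} therefore produce bounds of order $h^{-q-1}\cdot h=h^{-q}K_h(y_h,\Omega)$, as claimed. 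There is no genuine analytical obstacle here; the whole corollary is essentially bookkeeping once the change of variables is in place. The only point that must be tracked carefully is the extra factor of $h$ coming from the Jacobian of $z_h$, which is precisely what causes the exponent of $h$ in the bounds to drop by one relative to Theorem~\ref{Thm4.1}.
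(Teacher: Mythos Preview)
Your proposal is correct and follows essentially the same approach as the paper's proof: both define $R_h$ as the pull-back of the $SO(3)$-valued field from Theorem~\ref{Thm4.1} through the midsurface parametrization (using the identity $\pi_h\circ\Theta_h\circ z_h(x)=\Theta_h(x',0)$), then obtain \eqref{4.44} from the volume change of variables with Jacobian $h\kappa_h$ and the bounds $C_2\le\kappa_h\le C_1$ of Lemma~\ref{L3.1}, and obtain \eqref{4.45}--\eqref{4.46} from the surface change of variables with area element $\sqrt{1+h^2\abs{\nabla'\theta}^2}\sim 1$, which accounts for the shift from $h^{-q-1}$ to $h^{-q}$ via the relation $K(u_h,\hat\Omega_h)\le Ch\,K_h(y_h,\Omega)$.
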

\begin{proof}
    Set $R_h := R \circ \pi_h \circ \Theta_h\circ z_h$ and let $Q_h$ be constructed as $Q$ in the proof of Theorem \ref{Thm4.1}. By the Change-of-Variables formula, Theorem \ref{Thm4.1} and \eqref{4.38}, there holds
    \begin{align*}
        &\int_{\Theta_h\circ z_h(\Omega)}\abs{\nabla u_h - R\circ\pi_h}^q \ dX\\
        &= \int_\Omega\abs{((\nabla u_h) \circ\Theta_h\circ z_h) - R ( \pi_h(\Theta_h\circ z_h))}^q\abs{\det (\nabla \Theta_h\circ z_h)} \ dx\\
        &= h\int_\Omega\abs{\nabla_h y_h M_h - R_h}^q\kappa_h \ dx\\
        &\leq C \int_{\Theta_h\circ z_h(\Omega)}\text{dist}^2(\nabla u_h, SO(3)) \vee \text{dist}^p(\nabla u_h, SO(3)) \ dX\\
        &= Ch \int_\Omega \text{dist}^2(\nabla_h y_h M_h, SO(3)) \vee \text{dist}^p(\nabla_h y_h M_h, SO(3)) \ \kappa_h \ dx\\
        &\leq ChK_h(y_h,\Omega).
    \end{align*}
    
    Together with property \eqref{4.39}, this shows (i).\\~\ 

    \textcolor{Green}{For $(ii)$, we first observe that the area-element of $\hat{\omega}_h$ is $\norm{\partial_1(\Theta_h \circ z_h (x')) \times \partial_2(\Theta_h \circ z_h (x'))}_2 = \sqrt{1 + h^2 \norm{\nabla ' \theta(x')}^2}$. With this we can conclude
    \begin{align*}
        \int_\omega \abs{(\nabla R_h) \circ \Theta_h \circ z_h}^q \ dx' &\leq \int_\omega \abs{(\nabla R_h) \circ \Theta_h \circ z_h}^q\sqrt{1 + h^2 \norm{\nabla ' \theta(x')}^2} \ dx'\\
        &= \int_{\hat{\omega}_h}\abs{\hat{\nabla}R_h}^q \ dX \leq Ch^{-q-1}K_h(u_h,\hat{\Omega}_h)\\
        &\leq C h^{-q-1}h K_h(y_h,\Omega).
    \end{align*}
    An analogous estimate shows $(iii)$.
    }
\end{proof}

Utilizing \eqref{4.19}, we obtain:
\begin{corollary}
\label{Cor4.2}
    Let $y_h \in W^{1,p}(\Omega;\mathbb{R}^3)$. Suppose that $h^{-p}K(u_h,\hat{\Omega}_h)$ is sufficiently small and additionally that $E_h(y_h,m_h) \leq C$ for every $h > 0$.\\
    Then, there holds:
    \begin{align*}
        &(i) \ \norm{\nabla_h y_h M_h - R_h}_{L^q(\Omega;\mathbb{R}^{3\times 3})} \leq Ch^{\frac{\beta}{q}},\\
        &(ii) \ \norm{\nabla R_h}_{L^q(\omega;\mathbb{R}^{3\times 3})} \leq Ch^{\frac{\beta}{q} - 1},\\
        &(iii) \ \norm{R_h - Q_h}_{L^q(\omega;\mathbb{R}^{3\times 3})} \leq Ch^{\frac{\beta}{q} - 1},
    \end{align*}
    where the constant $C$ is independent of $y_h$ and $h$.
\end{corollary}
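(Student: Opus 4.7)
The statement is essentially a corollary of Corollary~\ref{Cor4.1} once the quantity $K_h(y_h,\Omega)$ is controlled by the rescaled energy. My plan is therefore to reduce everything to a single quantitative bound on $K_h$ and then to insert this bound into the three estimates \eqref{4.44}--\eqref{4.46}.

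First, I would use the growth hypothesis \ref{W3} on $W$ together with the definition of $W^{inc}$ and the fact that the incompressibility forces $W^{inc}(\nabla_h y_h M_h, m_h\circ y_h)=W(\nabla_h y_h M_h, m_h\circ y_h)$ almost everywhere on the set where the elastic energy is finite. Combining this with the uniform lower bound \eqref{4.39} for $\kappa_h$ yields
\begin{equation*}
    K_h(y_h,\Omega) \leq C\,h^{\beta}\,E_h^{el}(y_h,m_h) \leq C\,h^{\beta}\,E_h(y_h,m_h) \leq C\,h^{\beta},
\end{equation*}
which is exactly \eqref{4.19}. This is the only place where the energy bound $E_h(y_h,m_h)\leq C$ enters, and it is essentially immediate from the assumptions.

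Next, since the smallness assumption $h^{-p}K(u_h,\hat{\Omega}_h)\ll 1$ is in force, Corollary~\ref{Cor4.1} is applicable and produces the matrix fields $R_h$ and matrices $Q_h$ satisfying \eqref{4.44}--\eqref{4.46}. Substituting the bound $K_h(y_h,\Omega)\leq Ch^{\beta}$ into \eqref{4.44} gives $\|\nabla_h y_h M_h - R_h\|_{L^q(\Omega;\mathbb{R}^{3\times 3})}^q \leq Ch^{\beta}$, whence $(i)$ after taking the $q$-th root. Similarly, \eqref{4.45} yields $\|\nabla R_h\|_{L^q(\omega;\mathbb{R}^{3\times 3})}^q \leq Ch^{-q}\cdot h^{\beta}=Ch^{\beta-q}$, which gives $(ii)$, and \eqref{4.46} produces $(iii)$ by the same arithmetic.

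There is no substantive obstacle here: the whole argument is a substitution once \eqref{4.19} is in place. The only point that deserves a line of verification is that the constants appearing in Corollary~\ref{Cor4.1} are indeed independent of $h$ in the range where the smallness assumption holds (which has already been checked in the proof of Theorem~\ref{Thm4.1} via the uniform bi-Lipschitz equivalence of the cylinders $B_{X,h}$) and that the constants $C_1,C_2$ from Lemma~\ref{L3.1} are used to pass between $E_h^{el}$ and the integral of $W(\nabla_h y_h M_h,m_h\circ y_h)\kappa_h$ without losing the scaling in $h$.
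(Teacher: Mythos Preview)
Your proposal is correct and follows exactly the paper's approach: the paper introduces Corollary~\ref{Cor4.2} with the single phrase ``Utilizing \eqref{4.19}, we obtain,'' so the entire argument is indeed the substitution of the bound $K_h(y_h,\Omega)\le Ch^\beta$ from \eqref{4.19} into the three estimates \eqref{4.44}--\eqref{4.46} of Corollary~\ref{Cor4.1}, precisely as you describe.
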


We now use this approximation by rotations result to show compactness of deformations and magnetizations. The proofs of the following theorems are essentially adaptations of the analogous results from \cite[Section 4]{BrescianiIncompressible} in the setting of plates.

\subsection*{Compactness of deformations}

\begin{theorem}
(Compactness of deformations)
\label{Thm 5.1}\\
    Consider a sequence of admissible states $(y_h,m_h)_h \subseteq \mathcal{Q}$ satisfying $E_h(y_h, m_h) \leq C$ for every $h > 0$. Then, there exists $u \in W^{1,2}(\omega;\mathbb{R}^2)$, $v \in W^{2,2}(\omega)$, a sequence of rotations $(Q_h)_h \subseteq SO(3)$ and a sequence of translation vectors $(c_h)_h \subseteq \mathbb{R}^3$, such that by setting $\Tilde{y}_h = T_h \circ y_h$, where $T_h: \mathbb{R}^3 \to \mathbb{R}^3$ is the rigid motion given by $T_h(\xi) := Q_h^T\xi - c_h$ for every $\xi \in \mathbb{R}^3$, up to subsequences there holds:
    \begin{align*}
        &\Tilde{y}_h \to \Theta_0 \circ z_0 := \lim_{h \to 0}\Theta_h\circ z_h \ \text{in} \ W^{1,p}(\Omega;\mathbb{R}^3),\\
        &U_h^{\Tilde{y}_h} \rightharpoonup u \ \text{weakly in} \ W^{1,2}(\omega;\mathbb{R}^2) \ \text{for some} \ u \in W^{1,2}(\omega;\mathbb{R}^2),\\
        &V_h^{\Tilde{y}_h} \to v \ \text{in} \ W^{1,2}(\omega) \ \text{for some} \ v \in W^{2,2}(\omega).
    \end{align*}
\end{theorem}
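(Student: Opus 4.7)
From the equiboundedness of $E_h(y_h, m_h)$ and the growth assumption \ref{W3} we get $K_h(y_h, \Omega) \le C h^\beta$. Corollary~\ref{Cor4.2} then produces $R_h \in W^{1,p}(\omega; SO(3))$ and $Q_h \in SO(3)$ with the stated rates; since $\beta > 2p$, all exponents $\beta/q - 1$ with $q \in \{2, p\}$ exceed one. Writing $\Tilde{R}_h := Q_h^T R_h$, the 2D Sobolev embedding $W^{1,p}(\omega) \hookrightarrow L^\infty(\omega)$ (valid since $p > 2$) sharpens (ii)--(iii) to $\|\Tilde{R}_h - I\|_{L^\infty} \le C h^{\beta/p - 1}$. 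I define $\Tilde{y}_h := Q_h^T y_h - c_h$ with $c_h \in \mathbb{R}^3$ to be fixed; an additional rigid-motion refinement of $(Q_h, c_h)$ of order $h^{\beta/2}$ will be inserted in Step~3.

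\textbf{Convergence in $W^{1,p}$ and compactness of $V_h^{\Tilde{y}_h}$.} Combining Corollary~\ref{Cor4.2}(i), (iii) (with $q = p$) and Lemma~\ref{L3.1} yields $\nabla_h \Tilde{y}_h \to I$ in $L^p(\Omega)$; hence $\partial_\alpha \Tilde{y}_h \to e_\alpha$ for $\alpha = 1, 2$ and $\partial_3 \Tilde{y}_h = h (\nabla_h \Tilde{y}_h)_{\cdot, 3} \to 0$ in $L^p$, matching $\nabla (\Theta_0 \circ z_0) = (e_1 \mid e_2 \mid 0)$. Poincaré-Wirtinger, after mean-normalizing $c_h$, then delivers the claimed $W^{1,p}$-convergence. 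For $V_h^{\Tilde{y}_h}$, the expansion $(M_h^{-1})_{3\alpha} = h \partial_\alpha \theta + O(h^2)$ from Proposition~\ref{Prop1}, combined with Corollary~\ref{Cor4.2}(i) for $q = 2$, gives $\|\partial_\alpha (\Tilde{y}_h)^3 - h \partial_\alpha \theta\|_{L^2(\Omega)} \le C h^{\beta/2 - 1}$, so averaging in $x_3$ and dividing by $h^{\beta/2 - 1}$ produces $\|\nabla' V_h^{\Tilde{y}_h}\|_{L^2(\omega)} \le C$. Differentiating once more in $x'$ introduces $\nabla R_h$, bounded in $L^2$ by $C h^{\beta/2 - 1}$ via Corollary~\ref{Cor4.2}(ii); this analogously yields $\|(\nabla')^2 V_h^{\Tilde{y}_h}\|_{L^2(\omega)} \le C$. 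Hence $V_h^{\Tilde{y}_h}$ is equibounded in $W^{2,2}(\omega)$ (after adjusting $(c_h)^3$ to normalize its $L^2$-mean), and Rellich-Kondrachov yields strong $W^{1,2}$-convergence to some $v \in W^{2,2}(\omega)$.

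\textbf{Step~3 (in-plane displacement, main obstacle).} A direct estimate only gives $\|\partial_\gamma \Tilde{y}_h^\alpha - \delta_{\gamma\alpha}\|_{L^2} \le C h^{\beta/2 - 1}$, one factor of $h$ too large for the rescaling $h^{-\beta/2}$. The remedy exploits the quadratic geometry of $SO(3)$: since $\Tilde{R}_h \in SO(3)$ pointwise,
\begin{equation*}
    \text{sym}(\Tilde{R}_h - I) = -\tfrac{1}{2}(\Tilde{R}_h - I)(\Tilde{R}_h - I)^T,
\end{equation*}
and combining with the $L^\infty$-bound above,
\begin{equation*}
    \|\text{sym}(\Tilde{R}_h - I)\|_{L^2(\omega)} \le \tfrac{1}{2}\|\Tilde{R}_h - I\|_{L^\infty}\|\Tilde{R}_h - I\|_{L^2} \le C h^{\beta/p + \beta/2 - 2} = o(h^{\beta/2}),
\end{equation*}
precisely because $\beta > 2p$. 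Together with the $O(h^{\beta/2})$ error from Corollary~\ref{Cor4.2}(i) and the $O(h^2)$ deviation of the upper-left $2 \times 2$ block of $M_h^{-1}$ from $I_2$ (which follows from $A_{\alpha\beta} = 0$ for $\alpha, \beta \in \{1,2\}$), this delivers $\|\text{sym}(\nabla' U_h^{\Tilde{y}_h})\|_{L^2(\omega)} \le C$. A final correction of $(Q_h, c_h)$ by a rigid motion of order $h^{\beta/2}$ absorbs the residual infinitesimal in-plane rotation (the skew part of the mean of the $2 \times 2$ block of $\Tilde{R}_h$ over $\omega$) and normalizes the $L^2$-mean of $U_h^{\Tilde{y}_h}$; Korn's second inequality on $\omega$ then gives the equiboundedness of $U_h^{\Tilde{y}_h}$ in $W^{1,2}(\omega; \mathbb{R}^2)$, whence weak compactness concludes. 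The decisive obstacle is this step: the naive rigidity is one power of $h$ short, and three ingredients---the quadratic geometry of $SO(3)$, the 2D Sobolev embedding based on $p > 2$, and a Korn-type rigid-motion normalization---must cooperate to restore the correct $h^{\beta/2}$ scaling.
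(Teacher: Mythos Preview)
Your overall strategy---extract the rotation field $R_h$ and constant $Q_h$ from Corollary~\ref{Cor4.2}, exploit the quadratic geometry of $SO(3)$ combined with the two-dimensional Sobolev embedding for the in-plane displacement, and use the $W^{1,2}$-bound on $R_h$ for the out-of-plane displacement---is the correct one, and it is precisely what lies behind the reference the paper invokes. The paper itself does not carry out this argument: after establishing $\tilde y_h\to\Theta_0\circ z_0$ in $W^{1,p}$ via Poincar\'e and Corollary~\ref{Cor4.2}, it simply cites \cite[Lemma~2, (32)--(33)]{Velcic} for the convergence of $U_h^{\tilde y_h}$ and $V_h^{\tilde y_h}$. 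So your route is more self-contained, and your Step~3 captures exactly the mechanism (the identity $\mathrm{sym}(\tilde R_h-I)=-\tfrac12(\tilde R_h-I)(\tilde R_h-I)^T$ together with $\beta>2p$ and Korn) that makes the in-plane scaling work.

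There is, however, a genuine error in your treatment of $V_h^{\tilde y_h}$. You write that ``differentiating once more in $x'$ introduces $\nabla R_h$'' and conclude $\|(\nabla')^2 V_h^{\tilde y_h}\|_{L^2(\omega)}\le C$. This is false: $V_h^{\tilde y_h}$ is an $x_3$-average of $(\tilde y_h)^3-h\theta$, and $\tilde y_h$ is only in $W^{1,p}$, so $V_h^{\tilde y_h}$ has no reason to lie in $W^{2,2}(\omega)$ at all. You cannot differentiate $\partial_\alpha(\tilde y_h)^3$ once more. The correct argument is indirect: show that $\partial_\alpha V_h^{\tilde y_h}$ differs from $h^{-(\beta/2-1)}(\tilde R_h)_{3\alpha}$ by a term that vanishes in $L^2$ (this follows from Corollary~\ref{Cor4.2}(i) and the expansion of $M_h^{-1}$). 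The rescaled rotation components $h^{-(\beta/2-1)}(\tilde R_h)_{3\alpha}$ \emph{are} bounded in $W^{1,2}(\omega)$ by Corollary~\ref{Cor4.2}(ii)--(iii), hence precompact in $L^2(\omega)$; this yields strong convergence of $\nabla' V_h^{\tilde y_h}$ in $L^2$, and the weak $W^{1,2}$-limit of $h^{-(\beta/2-1)}(\tilde R_h)_{3\alpha}$ furnishes the extra derivative placing $v$ in $W^{2,2}(\omega)$. So the conclusion survives, but the equiboundedness of $V_h^{\tilde y_h}$ in $W^{2,2}$ and the appeal to Rellich--Kondrachov at that level must be replaced by this comparison with $\tilde R_h$. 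A minor point in Step~3: the rigid-motion correction absorbing the mean in-plane skew part is of order $h^{\beta/2}|W_h|$, which is $O(h^{\beta/2-1})$ rather than $O(h^{\beta/2})$; this does not affect the other convergences since $\beta/2-1>p-1>2$.
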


\begin{proof}
    Choosing $R_h$ and $Q_h$ as in Corollary \ref{Cor4.2}, we obtain the following estimates:
    \begin{align*}
        &\norm{\nabla_h y_h M_h - R_h}_{L^q(\Omega;\mathbb{R}^{3\times 3})} \leq Ch^{\frac{\beta}{q}},\\
        &\norm{R_h - Q_h}_{L^q(\omega;\mathbb{R}^{3\times 3})} \leq Ch^{\frac{\beta}{q} - 1}.
    \end{align*}
    Together this yields
    \begin{equation}
        \norm{\nabla_h y_h M_h - Q_h}_{L^q(\Omega;\mathbb{R}^{3\times 3})} \leq Ch^{\frac{\beta}{q} - 1}.
        \label{5.4}
    \end{equation}
    We set
    \begin{equation*}
        c_h := \fint_\Omega(Q_h^Ty_h - \Theta_h\circ z_h) \ dx.
    \end{equation*}

    Then by consecutively applying the Poincaré-inequality, the chain rule, the fact that $\nabla \Theta_h\circ z_h$ is bounded by \eqref{4.7} and \eqref{5.4}, we obtain
    \begin{align*}
        \norm{\Tilde{y}_h - \Theta_h\circ z_h}_{L^q(\Omega;\mathbb{R}^{3\times 3})} &\leq \norm{\nabla\Tilde{y}_h - \nabla(\Theta_h\circ z_h)}_{L^q(\Omega;\mathbb{R}^{3\times 3})}\\
        &= \norm{Q_h^T \nabla(u \circ \Theta_h\circ z_h) - \nabla(\Theta_h\circ z_h)}_{L^q(\Omega;\mathbb{R}^{3\times 3})}\\
        &= \norm{Q_h^T ((\nabla u) \circ \Theta_h\circ z_h)\nabla (\Theta_h \circ z_h) - \nabla(\Theta_h\circ z_h)}_{L^q(\Omega;\mathbb{R}^{3\times 3})}\\
        &\leq C \norm{Q_h^T\nabla_h y_h M_h - Id}_{L^q(\Omega;\mathbb{R}^{3\times 3})}\\
        &\leq Ch^{\frac{\beta}{q}},
    \end{align*}    

   \textcolor{Green}{where we also used the fact that,
    \begin{align*}
        \norm{\nabla (\Theta_h \circ z_h)}_{L^\infty(\Omega;\mathbb{R}^{3\times 3})} \leq C.
    \end{align*}}

    Since $\beta > 2p$, $q \in \{2,p\}$ and
    \begin{equation*}
        \frac{\beta}{q} \geq \frac{\beta}{p} > 2,
    \end{equation*}
    it follows that $\Tilde{y}_h \to \Theta_0 \circ z_0 \ \text{in} \ W^{1,p}(\Omega;\mathbb{R}^3)$, as $h \to 0^+$. 

    We obtain $U_h^{\Tilde{y}_h} \rightharpoonup u \ \text{weakly in} \ W^{1,2}(\omega;\mathbb{R}^2)$ and $V_h^{\Tilde{y}_h} \to v \ \text{in} \ W^{1,2}(\omega)$ \textcolor{Green}{ from equations $(32)$ and $(33)$ in \cite[Lemma 2]{Velcic}}.
\end{proof}

\subsection*{Compactness of magnetizations}

\begin{theorem}
\label{Thm 5.2}
    (Compactness of magnetizations)\\
    For a sequence of admissible maps $((y_h, m_h))_h \subseteq \mathcal{Q}$ with $E_h^{el}(y_h,m_h) + E_h^{exc}(y_h,m_h) \leq C$, for every $h > 0$ we set $\Tilde{y}_h := T_h \circ y_h$ and $\Tilde{m}_h := Q_h^Tm_h\circ T_h^{-1}$, with $Q_h \in SO(3)$ and $T_h: \mathbb{R}^3 \to \mathbb{R}^3 $ defined as in Theorem \ref{Thm 5.1}. Then, there exists a map $\lambda \in W^{1,2}(\omega;\mathbb{S}^2)$, such that, up to subsequences, there holds:
    \begin{align}
        &\Tilde{m}_h \circ \Tilde{y}_h \to \lambda \ \ \text{in} \ \ L^r(\Omega;\mathbb{R}^3) \ \ \text{for every} \ 1\leq r < \infty,
        \label{5.7}\\
        &\Tilde{\mu}_h := (\chi_{\Omega^{\Tilde{y}_h}}m_h)\circ\Theta_h\circ z_h \to \chi_{\Omega}\lambda \ \ \text{in} \ \ L^r(\mathbb{R}^3;\mathbb{R}^3) \ \ \text{for every} \ 1\leq r < \infty.
        \label{5.8}
    \end{align}
\end{theorem}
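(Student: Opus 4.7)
The plan is to adapt the proof of the analogous result for plates \cite[Proposition 4.2]{BrescianiIncompressible}, exploiting the fact that $\Theta_h\circ z_h$ is a $C^1$-diffeomorphism $L^\infty$-close to the projection $(x',x_3)\mapsto(x',0)$ (Proposition~\ref{Prop1}), so that the geometric structure of the deformed sets is essentially that of the plate case. The three quantitative inputs are: the exchange energy bound gives $\int_{\Omega^{\tilde{y}_h}}|\nabla\tilde{m}_h|^2\,d\xi\leq Ch$; by Theorem~\ref{Thm 5.1} and the Sobolev embedding $W^{1,p}\hookrightarrow C^{0,\alpha}(\overline{\Omega})$ (since $p>3$), $\tilde{y}_h\to\Theta_0\circ z_0$ uniformly on $\overline{\Omega}$; and by incompressibility together with \eqref{4.39}, $\det\nabla\tilde{y}_h=h\kappa_h\geq C_2 h$ almost everywhere.

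The key local step is a topological-degree construction. Fix $x'_0\in\omega$ and a small disc $D_r(x'_0)\subseteq\omega$. By Corollary~\ref{Cor4.2} and the uniform convergence of $\tilde{y}_h$, the restriction of $\tilde{y}_h$ to $D_r(x'_0)\times(-1/2,1/2)$ is uniformly close to an affine map near $(x'_0,0)$. The stability (D.5), normalization (D.1) and solvability (D.4) properties of the degree applied to this comparison produce $\delta,\delta'>0$ independent of $h$ such that, for $h$ small,
\[
\tilde{C}_h := D_{\delta r}(\xi_h')\times(\xi_h^{3}-\delta h,\xi_h^{3}+\delta h)\subseteq\Omega^{\tilde{y}_h}\quad\text{and}\quad \tilde{y}_h(D_{\delta' r}(x'_0)\times(-1/2,1/2))\subseteq\tilde{C}_h,
\]
with $\xi_h\to(x'_0,0)$. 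Rescaling $\tilde{C}_h$ back to a fixed cylinder $C$ via $\Psi_h(y',y_3):=(y',\xi_h^{3}+hy_3)$ (so $\det\nabla\Psi_h=h$) and setting $m^*_h:=\tilde{m}_h\circ\Psi_h$, the exchange energy bound combined with a change of variables shows that $(m^*_h)$ is bounded in $W^{1,2}(C;\mathbb{S}^2)$, with the additional smallness $\|\partial_{y_3}m^*_h\|_{L^2(C)}^2\leq h\int_{\tilde{C}_h}|\nabla\tilde{m}_h|^2\,d\xi=O(h^2)$. By Rellich and the $\mathbb{S}^2$-constraint, a subsequence converges strongly in $L^r(C)$ for every $r<\infty$ and weakly in $W^{1,2}(C)$ to some $m^*\in W^{1,2}(C;\mathbb{S}^2)$ whose $y_3$-derivative vanishes; hence $m^*(y',y_3)=\lambda^*(y')$ for some $\lambda^*\in W^{1,2}(D_{\delta r}(x'_0);\mathbb{S}^2)$.

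A countable cover of $\omega$ by such discs together with a diagonal extraction produce a single subsequence for which the local limits exist, and uniqueness of weak limits on overlapping discs ensures compatibility, giving a global $\lambda\in W^{1,2}(\omega;\mathbb{S}^2)$. For \eqref{5.7}, the second inclusion above yields, for $x\in D_{\delta' r}(x'_0)\times(-1/2,1/2)$, the identity $\tilde{m}_h\circ\tilde{y}_h(x)=m^*_h(\Psi_h^{-1}(\tilde{y}_h(x)))$; combining the strong $L^r$ convergence of $m^*_h$ with the uniform convergence of $\Psi_h^{-1}\circ\tilde{y}_h$ to $(x',\cdot)$ gives the claimed $L^r$-limit, and summing over the covering concludes. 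For \eqref{5.8} the same argument applies with $\Theta_h\circ z_h$ in place of $\tilde{y}_h$: these two maps coincide in $L^\infty$ up to $O(h^{\beta/p})$ (Theorem~\ref{Thm 5.1}) and hence see the same cylinder structure, while the factor $\chi_{\Omega^{\tilde{y}_h}}$ automatically enforces vanishing outside $\Omega$, matching the limit $\chi_\Omega\lambda$.

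The principal obstacle is the degree-theoretic cylinder construction: since only $L^p$-type control on $\nabla\tilde{y}_h$ and uniform control on $\tilde{y}_h$ itself are available, producing cylinders of size $\delta r\times\delta h$ uniformly contained in $\Omega^{\tilde{y}_h}$, while simultaneously ensuring that a macroscopic portion of $\Omega$ is mapped into them, is the heart of the plate-case argument inherited from \cite[Proposition 4.2]{BrescianiIncompressible}. Once the two inclusions are established, the shallow-shell-to-plate transfer is routine thanks to the uniform proximity of $\Theta_h\circ z_h$ to the projection in Proposition~\ref{Prop1}.
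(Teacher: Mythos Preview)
Your outline is correct and will succeed, but it takes a somewhat different route from the paper. You build \emph{flat} Euclidean cylinders $D_{\delta r}(\xi'_h)\times(\xi_h^3-\delta h,\xi_h^3+\delta h)$ locally near each $x'_0\in\omega$ via a degree argument, rescale affinely by $\Psi_h$, extract local weak limits, and then patch over a countable cover with a diagonal extraction and a compatibility check on overlaps. The paper instead exploits the diffeomorphism $\Theta_h\circ z_h$ globally: the inner set is the \emph{curved} shell region $\hat\Omega^\varepsilon_{\tau h}:=\Theta_h\circ z_h(\Omega^\varepsilon_\tau)$, and a single degree comparison of $\tilde y_h$ with $\Theta_h\circ z_h$ (using only the uniform bound $\|\tilde y_h-\Theta_h\circ z_h\|_{C^0}\le Ch^{\beta/p}$ from Theorem~\ref{Thm 5.1}; Corollary~\ref{Cor4.2} is not invoked directly here) shows $\hat\Omega^\varepsilon_{\tau h}\subset\Omega^{\tilde y_h}$. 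One then sets $\hat m_h:=\tilde m_h\circ\Theta_h\circ z_h$ on the fixed domain $\Omega^\varepsilon_\tau$ and uses the bounds on $M_h,\kappa_h$ from Lemma~\ref{L3.1} to convert the exchange bound into $\int_{\Omega^\varepsilon_\tau}|\nabla_h\hat m_h|^2\,dx\le C$, independent of $\varepsilon,\tau$. Weak compactness then produces $\lambda$ directly on exhausting subdomains, with no local patching or overlap argument.

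Your flat-cylinder approach works precisely because the shell is shallow (so $\Theta_h\circ z_h$ is affine up to $O(h r)$ on discs of radius $r$), but it carries more bookkeeping; the paper's global pull-back through $\Theta_h\circ z_h$ is the natural shortcut in this geometry. For the final convergences \eqref{5.7}--\eqref{5.8} both proofs defer to Steps~3--4 of \cite[Proposition~4.2]{BrescianiIncompressible}, which go through verbatim once one has the analogue of your ``second inclusion''.
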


\begin{proof}
    The proof is an adaptation of \cite[Proposition 4.2]{BrescianiIncompressible}, for shallow shells. It follows the same four steps approach. Steps 1 and 2 require arguments adapted for shallow shells and are presented, whereas Steps 3 and 4 require no adaptation and are therefore obmitted.

    \textbf{Step 1} (Existence of an inner cylinder):

   From Theorem \ref{Thm 5.1} we know that

   \begin{equation*}
       \norm{\Tilde{y}_h - \Theta_h\circ z_h}_{L^p(\Omega;\mathbb{R}^{3\times 3})} \leq Ch^{\frac{\beta}{p}} := \gamma_h,
   \end{equation*}
   and by Sobolev embeddings,
   \begin{equation*}
       \norm{\Tilde{y}_h - \Theta_h\circ z_h}_{C^0(\overline{\Omega};\mathbb{R}^3)} \leq \gamma_h.
   \end{equation*}

   Now, define for $a, b > 0$
   \begin{align*}
       &\omega^a := \{ x' \in \omega \ : \ \text{dist}(x',\partial \omega) > a \},\\
       &\Omega^a_b := \omega^a \times \big(-\frac{b}{2},\frac{b}{2}\big),\\
       &\omega^{-a} := \{ x' \in \mathbb{R}^2 \ : \ \text{dist}(x',\partial \omega) < a \},\\
       &\Omega^{-a}_b := \omega^{-a} \times \big(-\frac{b}{2},\frac{b}{2}\big).
   \end{align*}
   
   Then, we set $\hat{\Omega}_{bh}^a = \Theta_h(z_h(\Omega^a_{b}))$ and $\hat{\Omega}_{bh}^{-a} = \Theta_h(z_h(\Omega_b^{-a}))$.

   In the next argument we write $\text{dist}_{\hat{\omega}_h}$ for the geodesic distance on $\hat{\omega}_h$ and use that since $\Theta_h \circ z_h$ is a $C^1$-Diffeomorphism, it is a bi-Lipschitz map.

   We fix a $\varepsilon > 0$ and $\tau \in (0,1)$. Then, for every $x \in \Omega_{\tau}^\varepsilon$, using the fact that $\Theta_h$ is bi-Lipschitz \textcolor{Green}{on $\overline{\Omega}_h$} and the definition of $\Omega_{\tau}^\varepsilon$, there holds
\textcolor{Green}{
   \begin{align*}
       \text{dist}(\Theta_h(z_h(x)), \partial \hat{\Omega}_h) &\geq C\text{dist}(x,\partial \Omega_h)\\
       &\geq C (\text{dist}(x',\partial \omega) \wedge h(\frac{1}{2} - \abs{x_3}))\\
       &> C(\varepsilon \wedge h\frac{1 - \tau}{2}).
   \end{align*}
   }

   Recalling $\gamma_h = Ch^{\frac{\beta}{p}}$ and $\frac{\beta}{p} > 2$, we observe that for sufficiently small $h$ there holds\\ $C(\varepsilon \wedge h\frac{1 - \tau}{2}) > \gamma_h$, which implies
   \begin{equation*}
       \norm{\Tilde{y}_h - \Theta_h}_{C^0(\overline{\Omega};\mathbb{R}^3)} < \text{dist}(\Theta_h(z_h(x)), \partial \hat{\Omega}_h).
   \end{equation*}

   From \ref{D5}, it follows that
   \begin{align*}
       &\Theta_h(z_h(x)) \notin \Tilde{y}_h(\partial \Omega),\\
       &\text{deg}(\Tilde{y}_h, \Omega, \Theta_h(z_h(x))) = \text{deg}(\Theta_h \circ z_h, \Omega, \Theta_h(z_h(x))) = 1.
   \end{align*}
   By \ref{D4}, this implies $\Theta_h(z_h(x)) \in \Tilde{y}_h(\Omega)$.

   Since $\varepsilon$ and $\tau$ were arbitrarily chosen, we can formulate the just shown argument in the following way:
   \begin{equation}
       \forall \ \varepsilon > 0, \forall \ \tau \in (0,1), \exists \ \overline{h}(\varepsilon,\tau) > 0: \forall \ h \in (0,\overline{h}(\varepsilon,\tau)], \  \hat{\Omega}^\varepsilon_{\tau h} \subseteq \Omega^{\Tilde{y}_h}.
       \label{5.15}
   \end{equation}

   \textbf{Step 2} (Convergence of magnetizations):

   By the chain rule, there holds
   \begin{align*}
       \nabla \Tilde{m}_h = Q_h^T\nabla m_h \circ T_h^{-1}Q_h,
   \end{align*}
   and applying the Change-of-Variable formula, we obtain

   \begin{equation*}
        \begin{aligned}
            E_h^{exc}(\Tilde{y}_h, \Tilde{m}_h) &= \frac{\alpha}{h} \int_{\Omega^{\Tilde{y}_h}}\abs{\nabla \Tilde{m}_h}^2 \ d\xi\\
            &= \frac{\alpha}{h}\int_{T_h(\Omega^{y_h})}\abs{\nabla m_h \circ T_h^{-1}}^2 \ d\xi\\
            &= \frac{\alpha}{h}\int_{\Omega^{y_h}}\abs{\nabla m_h}^2 \ d\xi\\
            &= E_h^{exc}(y_h, m_h).
        \end{aligned}
   \end{equation*}

   Now we fix $\varepsilon > 0$ and $\tau \in (0,1)$ and apply \eqref{5.15}, so that for $h < \overline{h}(\varepsilon,\tau)$ we infer $\hat{\Omega}^\varepsilon_{\tau h} \subseteq \Omega^{\Tilde{y}_h}$. Using this, we obtain

   \begin{align*}
       C \geq E_h^{exc}(\Tilde{y}_h, \Tilde{m}_h) &= \frac{\alpha}{h}\int_{\Omega^{\Tilde{y}_h}}\abs{\nabla \Tilde{m}_h}^2 \ d\xi\\
       &\geq \frac{\alpha}{h}\int_{\hat{\Omega}^\varepsilon_{\tau h}}\abs{\nabla \Tilde{m}_h}^2 \ d\xi.
   \end{align*}
   
   Also, $\Tilde{m}_{h_{|\hat{\Omega}^\varepsilon_{\tau h}}}$ or analogously
   \begin{align*}
       \hat{m}_h := \Tilde{m}_h \circ\Theta_h \circ z_h(\Omega^\varepsilon_\tau),
   \end{align*}
   is well-defined and a map in $W^{1,2}(\Omega^\varepsilon_\tau; \mathbb{S}^2) $. 
   By a straightforward calculation for $x \in \Omega^\varepsilon_\tau$ and $\xi = (\Theta_h\circ z_h) (x)$ with the notation from \eqref{3.16}, we obtain
   \begin{align*}
       (\nabla\Tilde{m}_h)(\xi) &= (\nabla \Tilde{m}_h) \circ \Theta_h \circ z_h (x)\\
       &= ((\nabla\Theta_h)^{-T}\circ z_h)(x)(\nabla(\Tilde{m}_h\circ\Theta_h)\circ z_h)(x)\\
       &= M_h^T(x)(\nabla_h(\Tilde{m}_h \circ \Theta_h \circ z_h))(x)\\
       &= M_h^T(x)(\nabla_h\hat{m}_h)(x)\\
       &= (M_h^T \circ(\Theta_h\circ z_h)^{-1})(\xi)(\nabla_h\hat{m}_h\circ(\Theta_h\circ z_h)^{-1})(\xi).
   \end{align*}
   
   Using this identity together with $\abs{(M_h^T \circ(\Theta_h\circ z_h)^{-1})(\xi)} = \abs{M_h^T(x)}$, property \eqref{4.41} and the Change-of-Variable formula, we infer
   \begin{align*}
       &\frac{\alpha}{h}\int_{\hat{\Omega}^\varepsilon_{\tau h}}\abs{\nabla \Tilde{m}_h}^2 \ d\xi\\
       &= \frac{\alpha}{h}\int_{\hat{\Omega}^\varepsilon_{\tau h}}\abs{((\nabla\Theta_h)^{-T}\circ z_h \circ(\Theta_h\circ z_h)^{-1})\nabla_h\hat{m}_h\circ(\Theta_h\circ z_h)^{-1}}^2 \ d\xi\\
       &\geq C\frac{\alpha}{h}\int_{\hat{\Omega}^\varepsilon_{\tau h}}\abs{(\nabla_h\hat{m}_h\circ(\Theta_h\circ z_h)^{-1})}^2 \ d\xi\\
       &= C\frac{\alpha}{h}\int_{\Omega^\varepsilon_\tau}\abs{\nabla_h \hat{m}_h}^2\abs{\det \nabla(\Theta_h \circ z_h)} \ dx\\
       &\geq C\alpha\int_{\Omega^\varepsilon_\tau}\abs{\nabla_h \hat{m}_h}^2 \ dx.
   \end{align*}

   To sum up, we have shown that
   \begin{equation}
       \alpha\int_{\Omega^\varepsilon_\tau}\abs{\nabla_h \hat{m}_h}^2 \ dx \leq C.
       \label{5.17}
   \end{equation}
    From this estimate it firstly follows with
    \begin{align*}
        \norm{\nabla \hat{m}_h}_{L^2(\Omega^\varepsilon_\tau;\mathbb{R}^3)} \leq \norm{\nabla_h \hat{m}_h}_{L^2(\Omega^\varepsilon_\tau;\mathbb{R}^3)} \leq C,
    \end{align*}
    that
    \begin{equation*}
        \hat{m}_h \rightharpoonup \lambda \ \ \text{in} \ \ W^{1,2}(\Omega^\varepsilon_\tau;\mathbb{R}^3),
    \end{equation*}
    for some $ \lambda \in W^{1,2}(\Omega^\varepsilon_\tau;\mathbb{R}^3)$ and secondly from 
    \begin{align*}
        \norm{h^{-1}\partial_3 \hat{m}_h}_{L^2(\Omega^\varepsilon_\tau;\mathbb{R}^3)} \leq C,
    \end{align*}
    that
    \begin{align*}
       &\partial_3 \hat{m}_h \rightharpoonup 0 \ \ \text{in} \ \ L^2(\Omega^\varepsilon_\tau;\mathbb{R}^3),\\
        &h^{-1}\partial_3 \hat{m}_h \rightharpoonup k \ \ \text{in} \ \ L^2(\Omega^\varepsilon_\tau;\mathbb{R}^3),
    \end{align*}
    for some $ k \in L^2(\Omega^\varepsilon_\tau;\mathbb{R}^3)$.

    Since the bound from \eqref{5.17} is independent of $\varepsilon$ and $\tau$, so is the convergence. Together with the fact that $\lambda$ does not depend on the third component $x_3$, it follows that $\lambda \in W^{1,2}_{loc}(\omega;\mathbb{S}^2)$ and $k \in L^2_{loc}(\Omega;\mathbb{R}^3)$. Additionally, by lower semi-continuity of the norm it follows that

    \begin{align*}
        C &\geq \liminf_{h \to 0^+}\int_{\Omega^\varepsilon_\tau}\abs{\nabla_h \hat{m}_h}^2 \ dx\\
        &\geq \int_{\Omega^\varepsilon_\tau}\abs{(\nabla'\lambda, k)}^2 \ dx\\
        &\geq \int_{\Omega^\varepsilon_\tau}\abs{\nabla'\lambda}^2 \ dx + \int_{\Omega^\varepsilon_\tau}\abs{k}^2 \ dx\\
        &\geq \tau \int_{\omega^\varepsilon}\abs{\nabla'\lambda}^2 \ dx.
    \end{align*}

    Therefore by letting $\varepsilon \to 0^+$ and $\tau \to 1^-$ it follows that $\nabla'\lambda \in L^2(\omega;\mathbb{R}^2)$, so that $\lambda \in W^{1,2}(\omega;\mathbb{S}^2)$.

    \textcolor{Green}{From this, by steps three and four in the proof of \cite[Proposition 4.2]{BrescianiIncompressible} the convergences in \eqref{5.7} and \eqref{5.8} follow.}

\end{proof}

%%%%%%%%%%%%%%%%%%%%%%%%%%%%%%%%%%%%%%%%%%%%%%%%%%%%%%%%%%%%%%%%%%%%%%%%

\section{Lower bound}
\label{s:7}

This section is dedicated to the second part of the proof of Theorem \ref{thm1}. We split it into three parts corresponding to the three parts of the magnetoelastic energy. We adapt arguments of \cite[Section 4]{BrescianiIncompressible}.

\subsection*{Lower bound for the exchange energy}

\begin{theorem}
    (Lower bound for the exchange energy)
    \label{Thm 6.1}\\
    Let $((y_h,m_h))_h \subseteq \mathcal{Q}$, be such that $E^{el}_h(y_h,m_h) + E^{exc}_h(y_h,m_h) \leq C $, for all $h >0$. Let $\lambda \in W^{1,2}(\omega; \mathbb{S}^2) $ be as in Theorem \ref{Thm 5.2}. Then,

    \begin{equation*}
        E^{exc}(\lambda) \leq \liminf_{h \to 0^+}E^{exc}_h(y_h,m_h).
    \end{equation*}
\end{theorem}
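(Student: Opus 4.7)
The plan is to adapt the strategy from \cite[Proposition 6.1]{BrescianiIncompressible} to the shallow shell geometry, reusing the localization, change of variables, and weak convergence already established in the proof of Theorem \ref{Thm 5.2}. First I would observe that the exchange energy is invariant under the rigid motion $T_h$: the computation at the beginning of Step 2 of the proof of Theorem \ref{Thm 5.2} gives $E^{exc}_h(\tilde{y}_h, \tilde{m}_h) = E^{exc}_h(y_h, m_h)$ directly by a change of variables, so it suffices to establish the lower bound with the tilded states on the right-hand side.

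Next, for arbitrary but fixed parameters $\varepsilon > 0$ and $\tau \in (0,1)$, I would use the inner cylinder inclusion $\hat{\Omega}^\varepsilon_{\tau h} \subseteq \Omega^{\tilde{y}_h}$ proved in \eqref{5.15} for all $h \leq \overline{h}(\varepsilon,\tau)$. Restricting the exchange integral to this cylinder and applying the same change of variables along $\Theta_h \circ z_h$ as in the proof of Theorem \ref{Thm 5.2}, together with $|\det \nabla(\Theta_h \circ z_h)| = h \kappa_h$, I obtain
\begin{equation*}
    \frac{\alpha}{h} \int_{\Omega^{\tilde{y}_h}} |\nabla \tilde{m}_h|^2 \, d\xi \,\geq\, \alpha \int_{\Omega^\varepsilon_\tau} |M_h^T \nabla_h \hat{m}_h|^2 \, \kappa_h \, dx,
\end{equation*}
where, as in that proof, $\hat{m}_h = \tilde{m}_h \circ \Theta_h \circ z_h$ is well defined on $\Omega^\varepsilon_\tau$.

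The decisive step is the passage to the liminf in the right-hand side. From Step 2 of the proof of Theorem \ref{Thm 5.2} I already know that $\nabla_h \hat{m}_h \rightharpoonup (\nabla'\lambda, k)$ weakly in $L^2(\Omega^\varepsilon_\tau; \mathbb{R}^{3\times 3})$ for some $k \in L^2(\Omega^\varepsilon_\tau; \mathbb{R}^3)$. Combining this with the uniform convergences $M_h \to \text{Id}$ in $L^\infty(\Omega;\mathbb{R}^{3\times 3})$ and $\kappa_h \to 1$ in $L^\infty(\Omega)$ provided by Lemma \ref{L3.1}, the sequence $\sqrt{\kappa_h}\, M_h^T \nabla_h \hat{m}_h$ converges weakly in $L^2(\Omega^\varepsilon_\tau)$ to $(\nabla'\lambda, k)$. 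Weak lower semi-continuity of the $L^2$-norm squared then gives
\begin{equation*}
    \liminf_{h \to 0^+} \alpha \int_{\Omega^\varepsilon_\tau} |M_h^T \nabla_h \hat{m}_h|^2 \, \kappa_h \, dx \,\geq\, \alpha \int_{\Omega^\varepsilon_\tau} \bigl(|\nabla'\lambda|^2 + |k|^2\bigr) \, dx \,\geq\, \alpha \tau \int_{\omega^\varepsilon} |\nabla'\lambda|^2 \, dx'.
\end{equation*}
Letting $\tau \to 1^-$ and $\varepsilon \to 0^+$ and applying monotone convergence on the right-hand side yields the claim.

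The main obstacle I expect is ensuring that the constant in the lower bound is \emph{sharp}, i.e. exactly $\alpha$ rather than some multiplicative $C\alpha$ with $C < 1$: the coarse estimate \eqref{4.41b} used in the compactness proof loses a constant, so here one must instead exploit that $M_h$ and $\kappa_h$ converge \emph{uniformly} to the identity and to $1$ by Proposition \ref{Prop1}, allowing them to be treated as asymptotically trivial perturbations when coupled with the weakly $L^2$-convergent sequence $\nabla_h \hat{m}_h$. All other ingredients — the invariance under $T_h$, the cylinder inclusion, and the change of variables — are already in place from Section \ref{s:4}.
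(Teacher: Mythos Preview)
Your proposal is correct and follows essentially the same route as the paper: invariance of the exchange energy under rigid motions, restriction to the inner cylinder $\hat{\Omega}^\varepsilon_{\tau h}$, change of variables via $\Theta_h \circ z_h$, weak lower semicontinuity using the convergences established in Theorem \ref{Thm 5.2}, and finally exhaustion $\varepsilon \to 0^+$, $\tau \to 1^-$. The one point where you are more careful than the paper is the sharp constant: the paper writes the step from $\frac{\alpha}{h}\int_{\hat{\Omega}^\varepsilon_{\tau h}}|\nabla \tilde{m}_h|^2\,d\xi$ to $\liminf_h \alpha\int_{\Omega^\varepsilon_\tau}|\nabla_h \hat{m}_h|^2\,dx$ by referring back to the computation leading to \eqref{5.17}, which in the compactness proof only delivered a constant $C\alpha$ via \eqref{4.41b}, whereas you make explicit that the uniform convergences $M_h \to \mathrm{Id}$ and $\kappa_h \to 1$ from Lemma \ref{L3.1} upgrade this to the sharp constant by passing to the weak limit of $\sqrt{\kappa_h}\,M_h^T\nabla_h\hat{m}_h$ directly.
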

\begin{proof}
    Recall that
    \begin{equation*}
        E^{exc}_h(y_h,m_h) = \frac{\alpha}{h}\int_{\Omega^{y_h}}\abs{\nabla m_h}^2 \ d\xi.
    \end{equation*}

    As in the proof of Theorem \ref{Thm 5.2}, set $\Tilde{y_h} = T_h\circ u_h$, $\Tilde{m}_h = Q_h^Tm_h\circ T_h^{-1}$ and $\hat{m}_h = \Tilde{m}_h \circ\Theta_h\circ z_{h|_\Omega}$. Now recalling  step 2 in said proof, we fix $\varepsilon > 0$ and $\tau \in (0,1)$ such that
    \begin{align*}
        &\hat{m}_h \rightharpoonup \lambda \ \ \text{in} \ W^{1,2}(\Omega_\tau^\varepsilon ; \mathbb{R}^3) \ \ \text{for some} \ \lambda \in W^{1,2}(\omega;\mathbb{S}^2)\\ 
        &h^{-1}\partial_3 \hat{m}_h \rightharpoonup k \ \ \text{in} \ L^2(\Omega_\tau^\varepsilon ; \mathbb{R}^3) \ \ \text{for some} \ k \in L^2_{loc}(\Omega;\mathbb{R}^3).
    \end{align*}

    Analogous to the calculation leading to \eqref{5.17}, we infer

    \begin{align*}
        \liminf_{h \to 0^+} E_h^{exc}(y_h,m_h) &= \liminf_{h \to 0^+} E_h^{exc}(\Tilde{y}_h,\Tilde{m}_h)\\
        &= \liminf_{h \to 0^+}\frac{\alpha}{h}\int_{\Omega^{\Tilde{y}_h}}\abs{\nabla \Tilde{m}_h}^2 \ d\xi\\
        &\geq \liminf_{h \to 0^+}\frac{\alpha}{h}\int_{\hat{\Omega}^\varepsilon_{h\tau}}\abs{\nabla \Tilde{m}_h}^2 \ d\xi\\
        &\geq \liminf_{h \to 0^+}\alpha\int_{\Omega^\varepsilon_{\tau}}\abs{\nabla_h \hat{m}_h}^2 \ dx\\
        &\geq \alpha \int_{\Omega^\varepsilon_\tau}\abs{\nabla ' \lambda}^2 \ dx + \alpha \int_{\Omega^\varepsilon_\tau}\abs{k}^2 \ dx\\
        &\geq \alpha \int_{\Omega^\varepsilon_\tau}\abs{\nabla ' \lambda}^2 \ dx\\
        &= \alpha \tau \int_{\omega^\varepsilon}\abs{\nabla ' \lambda}^2 \ dx'.
    \end{align*}

    Taking $\varepsilon \to 0^+$ and $\tau \to 1^-$ concludes the proof.
    
\end{proof}

\subsection*{Lower bound for the elastic energy}

As in \cite{BrescianiIncompressible}, we approximate the incompressible energy from below by adding a penalization term that forces the determinant to be close to 1. Namely, for every $k \in \mathbb{N}$, we define the approximated energy density as

\begin{equation*}
    W^k(F,\nu) := W(F,\nu) + \frac{k}{2}(\det F - 1)^2,
\end{equation*}
for all $F \in \mathbb{R}^{3\times 3} $, $\nu \in \mathbb{S}^2$. Note that by definition $W^{inc} \geq W^k \geq W$, and $W^k$ satisfies \ref{W1} and \ref{W2}. We also infer the following Taylor expansion:

\begin{equation}
    W^k(Id + G, \nu) = \frac{1}{2}Q_3^k(G,\nu) + \omega^k(G,\nu),
    \label{6.3}
\end{equation}
for all $G \in \mathbb{R}^{3\times 3} $ with $\abs{G} < \delta$, where $\delta$ is taken from \ref{W4}, and for all $\nu \in \mathbb{S}^2$, with
\begin{align*}
    &Q_3^k(G,\nu) = Q_3(G,\nu) + k(\text{tr}(G))^2,\\
    &\omega^k(G,\nu) = \omega(G,\nu) + k \gamma(\abs{G}^2),
\end{align*}
for $\gamma(t) = o(t^2)$ as $t \to 0^+$.

We further define

\begin{equation*}
    Q_2^k(H, \nu) := \min \bigg\{Q_3^k\bigg(\left(\begin{array}{c|c}
    	H & 0'\\ 
    	\hline 
    	(0')^T & 0
    \end{array}\right) 
    + c \otimes e_3 + e_3 \otimes c, \nu \bigg) : c \in \mathbb{R}^3\bigg\},
\end{equation*}

for all $H \in \mathbb{R}^{2\times 2} $ and $\nu \in \mathbb{S}^2$. 

By \cite[Lemma 2.1]{ContiEstimate}, there holds

\begin{equation}
    Q_2^k(H, \nu) \geq Q_2^{inc}(H, \nu) - \frac{C}{\sqrt{k}}\abs{H}^2.
    \label{6.7}
\end{equation}

We are now in a position to show the lower bound for the elastic energy.

\begin{theorem}
    (Lower bound for the elastic energy)
    \label{Thm 6.2}\\
    Let $((y_h,m_h))_h \subseteq \mathcal{Q}$ be such that $E^{el}_h(y_h,m_h) + E^{exc}_h \leq C$ for all $h > 0$. Then, for $u \in W^{1,2}(\omega;\mathbb{R}^2) $, $v \in W^{2,2}(\omega)$ and $\lambda \in W^{1,2}(\omega;\mathbb{S}^2)$ as in Theorem \ref{Thm 5.1} and Theorem \ref{Thm 5.2}, there holds
    \begin{equation*}
        E^{el}(u,v,\lambda) \leq \liminf_{h\to 0^+}E^{el}_h(y_h,m_h).
    \end{equation*}
\end{theorem}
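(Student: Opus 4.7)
The plan is to adapt the elastic lower bound from \cite[Proposition 4.4]{BrescianiIncompressible} to the shallow-shell setting, incorporating the change of variables through $M_h$ and the ansatz of \cite{Velcic} for the limiting strain. By frame indifference (\ref{W1}) and Theorems \ref{Thm 5.1}--\ref{Thm 5.2}, we may assume without loss of generality that the rigid-motion normalization has already been performed (the elastic energy is invariant under it). Let $R_h \in W^{1,p}(\omega;\mathbb{R}^{3\times 3})$ be the matrix field supplied by Corollary \ref{Cor4.2}, taking values in $SO(3)$, and set
\[
P_h := \frac{R_h^T \nabla_h y_h M_h - Id}{h^{\beta/2}}.
\]
Corollary \ref{Cor4.2}(i) yields $\|P_h\|_{L^2(\Omega;\mathbb{R}^{3\times 3})} \leq C$, so up to a subsequence $P_h \rightharpoonup P$ weakly in $L^2(\Omega;\mathbb{R}^{3\times 3})$. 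By \ref{W1},
\[
W^{inc}(\nabla_h y_h M_h, m_h \circ y_h) = W^{inc}\bigl(Id + h^{\beta/2} P_h,\; R_h^T (m_h \circ y_h)\bigr).
\]

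Next, fix $k \in \mathbb{N}$ and use the bound $W^{inc} \geq W^k$. On the good set $A_h := \{x \in \Omega : |h^{\beta/2} P_h(x)| < \delta\}$, whose complement has vanishing measure by Chebyshev together with the $L^p$-bound in Corollary \ref{Cor4.2}(i) and the hypothesis $\beta > 2p$, we apply the Taylor expansion \eqref{6.3}. The remainder $\omega^k(h^{\beta/2} P_h, \cdot)$ is controlled via the uniform modulus $\overline{\omega}$ from \ref{W5}. Combining the convergences $R_h \to Id$ (Corollary \ref{Cor4.2}(iii)), $m_h \circ y_h \to \lambda$ in $L^r$ (Theorem \ref{Thm 5.2}), and $\kappa_h \to 1$ (Lemma \ref{L3.1}) with the continuity of $\nu \mapsto \mathbb{C}^\nu$ in \ref{W5} and the convexity of $Q_3^k(\cdot,\nu)$, weak lower semicontinuity gives
\[
\liminf_{h\to 0^+} \frac{1}{h^\beta} \int_\Omega W^{inc}(\nabla_h y_h M_h, m_h \circ y_h)\, \kappa_h\, dx \;\geq\; \frac{1}{2} \int_\Omega Q_3^k(P, \lambda)\, dx.
\]

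The central difficulty is the identification of $\mathrm{sym}(P'')$, the in-plane symmetric part of the weak limit. Using the expansion \eqref{4.8} of $M_h$ together with the convergences $U_h^{y_h} \rightharpoonup u$ in $W^{1,2}(\omega;\mathbb{R}^2)$ and $V_h^{y_h} \to v$ in $W^{1,2}(\omega)$ from Theorem \ref{Thm 5.1}, an argument following \cite[Lemma 2]{Velcic} adapted to our $h^{\beta/2}$ scaling yields
\[
\mathrm{sym}(P'') = \mathrm{sym}\bigl(\nabla' u + \nabla' v \otimes \nabla' \theta\bigr) + x_3\,(\nabla')^2 v.
\]
The shallow-shell curvature term $\nabla' v \otimes \nabla' \theta$ arises precisely from the $O(h)$ contribution of $A(x')$ in \eqref{4.8}, which is the feature distinguishing shallow shells from plates in this analysis. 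Once $\mathrm{sym}(P'')$ is determined, at a.e. $x$ we minimize $Q_3^k(P,\lambda)$ over the unidentified last column, obtaining $Q_3^k(P,\lambda) \geq Q_2^k(\mathrm{sym}(P''),\lambda)$; integrating in $x_3 \in (-\tfrac12,\tfrac12)$, the cross-term between membrane and bending parts vanishes by symmetry of the integrand in $x_3$, and we reach
\[
\int_\Omega Q_3^k(P,\lambda)\, dx \;\geq\; \int_\omega Q_2^k\bigl(\mathrm{sym}(\nabla' u + \nabla' v \otimes \nabla' \theta),\, \lambda\bigr)\,dx' + \frac{1}{12}\int_\omega Q_2^k\bigl((\nabla')^2 v,\, \lambda\bigr)\,dx'.
\]
The Conti-type estimate \eqref{6.7} then allows replacing $Q_2^k$ with $Q_2^{inc}$ up to an error $(C/\sqrt{k})|H|^2$; passing $k \to \infty$ yields $E^{el}(u,v,\lambda) \leq \liminf_{h\to 0^+} E^{el}_h(y_h,m_h)$.
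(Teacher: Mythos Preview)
Your argument follows the paper's proof closely: frame-indifference reduction, penalization via $W^k$, Taylor expansion on a good set, identification of the limiting strain through \cite{Velcic} (producing the extra $\nabla'v\otimes\nabla'\theta$ contribution), the passage $Q_3^k\to Q_2^k$ by minimizing over the third column, the membrane/bending splitting via the $x_3$-integration, and finally the Conti bound \eqref{6.7} to let $k\to\infty$.

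There is, however, one step that does not go through as written. Your good set $A_h=\{|h^{\beta/2}P_h|<\delta\}$ is only the region of validity of the Taylor expansion; it does not force the remainder to vanish. On $A_h$ one has $h^{-\beta}|\omega^k(h^{\beta/2}P_h,\cdot)|\le |P_h|^2\,\bar\omega^k(|h^{\beta/2}P_h|)\le |P_h|^2\,\bar\omega^k(\delta)$, and since $|P_h|^2$ is only bounded in $L^1$ while $\bar\omega^k(\delta)$ is a fixed positive constant, the integral need not tend to zero. The paper (following \cite{BrescianiIncompressible}) instead truncates at the level $\{|P_h|\le h^{-\beta/4}\}$: then $|h^{\beta/2}P_h|\le h^{\beta/4}\to 0$ uniformly on the good set, so $\bar\omega^k(h^{\beta/4})\to 0$ and the remainder vanishes directly, while the complement still has vanishing measure by Chebyshev and the truncated sequence $\chi_h P_h$ still converges weakly to $P$ in $L^2$. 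With this refinement your proof is complete and coincides with the paper's. (A cosmetic remark: the paper records $L=-(\nabla')^2v$ for the bending part whereas you write $+(\nabla')^2v$; this is immaterial since $Q_2^{inc}(\cdot,\lambda)$ is a quadratic form.)
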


\begin{proof}
    We adapt the proof of \cite[Proposition 4.4]{BrescianiIncompressible}.

    First, as in Theorem \ref{Thm 5.1}, we set $\Tilde{y}_h = T_h \circ y_h$ and $\Tilde{m}_h = Q_h^Tm_h\circ T_h^{-1}$. Also, set $F_h := \nabla_h y_h M_h$, $\Tilde{F}_h := Q_h^TF_h$ and $\Tilde{R}_h = Q_h^TR_h$, where $R_h$ is given by Corollary \ref{Cor4.2}. We define 
    \begin{align*}
        &G_h := \frac{1}{h^{\frac{\beta}{2}}}(\Tilde{R}_h^T\Tilde{F}_h - Id),\\
        &X_h := \{x \in \Omega : \abs{G_h(x)} \leq h^{-\frac{\beta}{4}}\},\\
        &\chi_h := \chi_{X_h}.
    \end{align*}
    
    \textcolor{Green}{From \cite[(59) and (60)]{Velcic}, we obtain}

    \begin{equation*}
        G_h \rightharpoonup G \ \ \text{in} \ L^2(\Omega;\mathbb{R}^{3\times 3}),
    \end{equation*}

    with
    \begin{align}
        G''(x) &= K(x') + L(x')x_3,
        \label{6.10}
    \end{align}

    for almost every $x \in \Omega$, where $K := \text{sym}(\nabla'u + \nabla'v \otimes \nabla'\theta) \in \mathbb{R}^{2\times 2} $ and $L := -(\nabla ')^2v \in \mathbb{R}^{2\times 2} $. Here $u$ and $v$ are the averaged limiting displacements identified in \eqref{3.23} and \eqref{3.24}.

    Since $E^{el}_h(y_h,m_h) \leq C$, we have that $W^{inc} = W^k = W$.

    \textcolor{Green}{Setting $\nu_h := m_h \circ y_h$ and $\Tilde{\nu}_h := \Tilde{m}_h \circ \Tilde{y}_h$, by \ref{W1}} and the Taylor expansion \eqref{6.3} we obtain

    \begin{equation}
        \begin{aligned}
            \chi_h W^{inc}(F_h,\nu_h) &= \chi_h W^{k}(F_h,\nu_h) = \chi_h W^{k}(Q_h^TF_h,Q_h^T\nu_h) = \chi_h W^{k}(\Tilde{F}_h,\Tilde{\nu}_h)\\
            &= \chi_h W^{k}(\Tilde{R}_h^T\Tilde{F}_h,\Tilde{R}_h^T\Tilde{\nu}_h) = \chi_h W^k(Id + h^{\frac{\beta}{2}}G_h, \Tilde{R}_h^T\Tilde{\nu}_h)\\
            &= \frac{1}{2}\chi_hQ_3^k(\chi_h^{\frac{\beta}{2}}G_h, \Tilde{R}_h^T\Tilde{\nu}_h) + \chi_h\omega^k(h^{\frac{\beta}{2}}G_h, \Tilde{R}_h^T\Tilde{\nu}_h)\\  
            &= \frac{h^\beta}{2}Q_3^k(\chi_hG_h,\Tilde{R}_h^T\Tilde{\nu}_h) + \omega^k(h^{\frac{\beta}{2}}\chi_h G_h, \Tilde{R}_h^T\Tilde{\nu}_h).
        \end{aligned}
        \label{6.11}
    \end{equation}

    In the last step we used the fact that
    \begin{align*}
        Q_3^k(h^{\frac{\beta}{2}}G_h,\nu) &= Q_3(h^{\frac{\beta}{2}}G_h,\nu) + k(\text{tr}(h^{\frac{\beta}{2}}G_h))^2\\
        &= \mathbb{C}^\nu h^{\frac{\beta}{2}}G_h : h^{\frac{\beta}{2}}G_h + k(\text{tr}(h^{\frac{\beta}{2}}G_h))^2\\
        &= h^\beta Q_3^k(G_h,\nu).
    \end{align*}

    By \eqref{6.11} and \eqref{4.39}, we deduce
    \begin{equation*}
        \begin{aligned}
            E^{el}(y_h,m_h) &= \frac{1}{h^\beta}\int_\Omega W^k(F_h,\nu_h) \ \kappa_h \ dx\\
            &\geq C\frac{1}{h^\beta}\int_\Omega \chi_h W^k(F_h,\nu_h) \ dx\\
            &= C\frac{1}{h^\beta}\int_\Omega \frac{h^\beta}{2}Q_3^k(\chi_hG_h,\Tilde{R}_h^T\Tilde{\nu}_h) + \omega^k(h^{\frac{\beta}{2}}\chi_h G_h, \Tilde{R}_h^T\Tilde{\nu}_h) \ dx.
        \end{aligned}
    \end{equation*}

    \textcolor{Green}{This  result is analogous to equation \cite[(4.43)]{BrescianiIncompressible}. By \cite[(4.46) and (4.47)]{BrescianiIncompressible}} we obtain
    \begin{equation}
        \liminf_{h\to 0^+}\int_\Omega Q_3^k(\chi_h G_h,\Tilde{R}_h^T\Tilde{\nu}_h) \ dx \geq \int_\Omega Q_3^k(G,\lambda) \ dx
        \label{6.13}
    \end{equation}

    and
    \begin{equation}
        \lim_{h \to 0^+}\frac{1}{h^\beta}\int_\Omega \omega^k(h^{\frac{\beta}{2}}\chi_h G_h, \Tilde{R}_h^T\Tilde{\nu}_h) \ dx = 0.
        \label{6.14}
    \end{equation}

    Using \eqref{4.42}, by \eqref{6.13} and \eqref{6.14} we have that
    \begin{equation*}
        \liminf_{h \to 0^+}E^{el}_h \geq \frac{1}{2}\int_\Omega Q_3^k(G,\lambda) \ dx \geq \frac{1}{2}\int_\Omega Q_2^k(G'',\lambda) \ dx,
    \end{equation*}

    and by \eqref{6.7},
    \begin{equation}
        \liminf_{h \to 0^+}E^{el}_h \geq \frac{1}{2}\int_\Omega Q_2^{inc}(G'',\lambda) \ dx - \frac{C}{2\sqrt{k}}\int_\Omega \abs{G''}^2 \ dx.
        \label{6.16}
    \end{equation}

    With \eqref{6.10}, together with the fact that $Q_2^{inc}$ is additive and
    \begin{align*}
        Q_2^{inc}(L(x')x_3, \lambda) = x_3^2Q_2^{inc}(L(x'), \lambda),
    \end{align*}
    
    we obtain
    \begin{equation}
        \begin{aligned}
           \int_\Omega Q_2^{inc}(G'',\lambda) \ dx &= \int_\Omega Q_2^{inc}(K(x'),\lambda) + x_3^2Q_2^{inc}(L(x'),\lambda) \ dx\\
            &= \int_{-\frac{1}{2}}^{\frac{1}{2}}\int_\omega Q_2^{inc}(K(x'),\lambda) + x_3^2Q_2^{inc}(L(x'),\lambda) \ dx' \ dx_3\\
            &= \int_\omega Q_2^{inc}(K(x'),\lambda) \ dx' + \frac{1}{12}\int_\omega Q_2^{inc}(L(x'),\lambda) \ dx'.
        \end{aligned}
        \label{6.17}
    \end{equation}

    With \eqref{6.17} and taking $k \to \infty$ in \eqref{6.16}, we conclude \textcolor{Green}{by a diagonal argument} that
    \begin{equation*}
        \begin{aligned}
            \liminf_{h \to 0^+}E^{el}_h &\geq \frac{1}{2} \int_\omega Q_2^{inc}(K, \lambda) \ dx' + \frac{1}{24}\int_\omega Q_2^{inc}(L, \lambda) \ dx'\\
            &= E^{el}(u, v, \lambda).
        \end{aligned}
    \end{equation*}

\end{proof}

\subsection*{Convergence of the magnetostatic energy}

We show the convergence of the magnetostatic energy. This result will also be instrumented to prove the convergence of the recovery sequence in the next section.

The weak formulation corresponding to \eqref{3.2} is the following: 

Find $\psi_m \in V^{1,2}(\mathbb{R}^3) $ such that
\begin{equation}
    \forall \ \varphi \in V^{1,2}(\mathbb{R}^3), \ \int_{\mathbb{R}^3}\nabla \psi_m \cdot \nabla \varphi \ d\xi = \int_{\mathbb{R}^3}\chi_{\Omega^y}m\cdot \nabla \varphi \ d\xi,
    \label{6.19}
\end{equation}

where
\begin{equation*}
    V^{1,2}(\mathbb{R}^3) := \big\{ \varphi \in L^2_{loc}(\mathbb{R}^3) : \nabla \varphi \in L^2(\mathbb{R}^3;\mathbb{R}^3)\big\}.
\end{equation*}

We cite \cite[Lemma 4.6]{BrescianiIncompressible} giving existence of such a stray field potential $\psi_m$:

\begin{lemma}
    \cite[Lemma 4.2]{BrescianiIncompressible}(Weak solutions of the Maxwell equation)
    \label{L 6.1}\\
    Let $(y,m) \subseteq \mathcal{Q}$. The Maxwell equation \eqref{3.2} admits a weak solution $\psi_m \in V^{1,2}(\mathbb{R}^3) $ which is unique up to additive constants and satisfies the following stability estimate:
    \begin{equation}
        \norm{\nabla \psi_m}_{L^2(\mathbb{R}^3;\mathbb{R}^3)} \leq \norm{\chi_{\Omega^y}m}_{L^2(\mathbb{R}^3;\mathbb{R}^3)}.
        \label{6.20}
    \end{equation}
    Moreover, such a weak solution admits the following variational characterization:
    \begin{equation*}
        \psi_m \in \text{argmin}\big\{ \int_{\mathbb{R}^3}\abs{\nabla \varphi - \chi_{\Omega^y}m}^2 \ d\xi : \varphi \in V^{1,2}(\mathbb{R}^3) \big\}. 
    \end{equation*}
\end{lemma}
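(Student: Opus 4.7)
The plan is to establish the lemma via the direct method applied to the variational characterization, then deduce the weak equation as the Euler--Lagrange condition and the stability bound by testing against the solution itself.

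First, I would work in the quotient space $\dot V^{1,2}(\mathbb{R}^3) := V^{1,2}(\mathbb{R}^3)/\mathbb{R}$ equipped with the norm $\|[\varphi]\|_{\dot V} := \|\nabla \varphi\|_{L^2(\mathbb{R}^3;\mathbb{R}^3)}$. Completeness of this space is classical in dimension $n = 3$, since the Gagliardo--Nirenberg--Sobolev inequality $\|\varphi - c_\varphi\|_{L^6(\mathbb{R}^3)} \leq C \|\nabla \varphi\|_{L^2(\mathbb{R}^3;\mathbb{R}^3)}$ (with $c_\varphi$ a suitable constant depending on $\varphi$) allows one to identify each equivalence class $[\varphi]$ with a unique representative in $L^6(\mathbb{R}^3)$ and to transfer Cauchy sequences in $\|\nabla \cdot \|_{L^2}$ to Cauchy sequences in $L^6 \cap \dot V^{1,2}$. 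Since $\chi_{\Omega^y} m$ has compact support and $|m| = 1$, we have $\chi_{\Omega^y} m \in L^2(\mathbb{R}^3;\mathbb{R}^3)$.

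Next, I would consider the convex functional
\begin{equation*}
    J(\varphi) := \int_{\mathbb{R}^3} \abs{\nabla \varphi - \chi_{\Omega^y} m}^2 \, d\xi, \qquad \varphi \in V^{1,2}(\mathbb{R}^3),
\end{equation*}
and observe that $J$ descends to the quotient $\dot V^{1,2}(\mathbb{R}^3)$. Expanding the square and applying Cauchy--Schwarz yields
\begin{equation*}
    J(\varphi) \geq \norm{\nabla \varphi}_{L^2}^2 - 2 \norm{\chi_{\Omega^y} m}_{L^2} \norm{\nabla \varphi}_{L^2},
\end{equation*}
so any minimizing sequence $(\varphi_n)$ is bounded in $\dot V^{1,2}(\mathbb{R}^3)$. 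After selecting representatives via the Sobolev embedding above and extracting a subsequence, we obtain $\nabla \varphi_n \rightharpoonup \nabla \psi_m$ weakly in $L^2(\mathbb{R}^3;\mathbb{R}^3)$ for some $\psi_m \in V^{1,2}(\mathbb{R}^3)$, and by weak lower semicontinuity of the convex functional $J$, the limit $\psi_m$ is a minimizer. This proves the variational characterization.

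Then I would derive the weak formulation \eqref{6.19} as the Euler--Lagrange equation: for any test function $\varphi \in V^{1,2}(\mathbb{R}^3)$, the first variation
\begin{equation*}
    \frac{d}{dt}\bigg|_{t=0} J(\psi_m + t\varphi) = 2 \int_{\mathbb{R}^3} (\nabla \psi_m - \chi_{\Omega^y} m) \cdot \nabla \varphi \, d\xi = 0
\end{equation*}
is exactly \eqref{6.19}. Uniqueness up to additive constants follows from the linearity of the equation: if $\psi_1, \psi_2$ are two weak solutions, their difference satisfies $\int_{\mathbb{R}^3} \nabla(\psi_1 - \psi_2) \cdot \nabla \varphi \, d\xi = 0$ for every $\varphi \in V^{1,2}(\mathbb{R}^3)$; choosing $\varphi = \psi_1 - \psi_2$ forces $\nabla(\psi_1 - \psi_2) \equiv 0$, hence $\psi_1 - \psi_2$ is constant. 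Finally, the stability estimate \eqref{6.20} is immediate by testing \eqref{6.19} against $\psi_m$ itself and applying Cauchy--Schwarz:
\begin{equation*}
    \norm{\nabla \psi_m}_{L^2}^2 = \int_{\mathbb{R}^3} \chi_{\Omega^y} m \cdot \nabla \psi_m \, d\xi \leq \norm{\chi_{\Omega^y} m}_{L^2} \norm{\nabla \psi_m}_{L^2}.
\end{equation*}
The main technical obstacle is the handling of the non-standard space $V^{1,2}(\mathbb{R}^3)$, in particular the completeness of $\dot V^{1,2}(\mathbb{R}^3)$ and the careful selection of representatives so that weak compactness can be exploited; once this is in place, the rest of the argument is a standard application of the direct method.
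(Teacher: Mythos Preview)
Your argument is correct and complete. The paper itself does not prove this lemma; it is merely quoted from \cite[Lemma 4.2]{BrescianiIncompressible}, so there is no in-paper proof to compare against. Your approach via the direct method on the quotient space $\dot V^{1,2}(\mathbb{R}^3)$ is standard and sound: coercivity of $J$ gives boundedness of minimizing sequences, weak lower semicontinuity of the convex integrand yields a minimizer, the Euler--Lagrange equation is exactly the weak formulation, and testing against $\psi_m$ gives the stability bound. An equally standard (and slightly shorter) alternative would be to invoke the Riesz representation theorem directly on the Hilbert space $\dot V^{1,2}(\mathbb{R}^3)$ for the bounded linear functional $\varphi \mapsto \int_{\mathbb{R}^3} \chi_{\Omega^y} m \cdot \nabla \varphi \, d\xi$, which produces existence, uniqueness, and the stability estimate in one stroke, with the variational characterization then following from the parallelogram identity; but your route has the advantage of making the variational structure primary, which is how the lemma is actually used later in the paper.
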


The proof of the following theorem is adapted from \cite[Proposition 4.5]{BrescianiIncompressible}.

\begin{theorem}
    (Convergence of the magnetostatic energy)
    \label{Thm 6.3}\\
    Let $((y_h,m_h))_h \subseteq \mathcal{Q}$ be such that $E_h(y_h,m_h) \leq C$ for every $h > 0$. Let $\lambda \in W^{1,2}(\omega;\mathbb{S}^2)$ be the map identified in Theorem \ref{Thm 5.2}. Then, up to subsequences, there holds:
    \begin{equation*}
        \lim_{h\to 0^+}E_h^{mag}(y_h,m_h) = E^{mag}(\lambda).
    \end{equation*}
\end{theorem}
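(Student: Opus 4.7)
The approach mirrors \cite[Proposition 4.5]{BrescianiIncompressible}; the new ingredient is to absorb the curvature $h\theta$ of the midsurface into vanishing errors. First, I observe that the magnetostatic energy is invariant under rigid motions: a direct covariance computation shows that $\tilde\psi(\xi) := \psi_{m_h}(T_h^{-1}\xi)$ solves the Maxwell equation with source $\nabla\cdot(\chi_{\Omega^{\tilde y_h}}\tilde m_h)$ and satisfies $\|\nabla\tilde\psi\|_{L^2}=\|\nabla\psi_{m_h}\|_{L^2}$, so $E_h^{mag}(y_h,m_h) = E_h^{mag}(\tilde y_h,\tilde m_h)$. Theorem \ref{Thm 5.2} further provides $\tilde\mu_h \to \chi_\Omega\lambda$ in every $L^r(\mathbb{R}^3;\mathbb{R}^3)$.

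Next, I perform the anisotropic thin-film rescaling of \cite{thinfilms}. Setting $\eta := \xi_3/h$ and $\hat\psi_h(\xi',\eta) := h^{-1}\psi_{\tilde m_h}(\xi', h\eta)$ produces the identity
\begin{equation*}
 E_h^{mag}(\tilde y_h,\tilde m_h) = \frac{1}{2}\int_{\mathbb{R}^3}\bigl(\,h^2|\nabla_{\xi'}\hat\psi_h|^2 + |\partial_\eta\hat\psi_h|^2\,\bigr)\, d\xi'\,d\eta.
\end{equation*}
Combining \eqref{6.20} with $\mathcal{L}^3(\Omega^{\tilde y_h}) \le Ch$ (a consequence of the $C^0$-convergence $\tilde y_h \to \Theta_0\circ z_0$ in Theorem \ref{Thm 5.1}), both summands are uniformly bounded. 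Up to a subsequence, $h\nabla_{\xi'}\hat\psi_h \rightharpoonup 0$ and $\partial_\eta\hat\psi_h \rightharpoonup \hat k$ in $L^2(\mathbb{R}^3)$ for some $\hat k \in L^2(\mathbb{R}^3)$.

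To identify $\hat k$, I test the weak formulation \eqref{6.19} with $\varphi(\xi) := \hat\zeta(\xi', \xi_3/h)$, $\hat\zeta \in C_c^\infty(\mathbb{R}^3)$. The left-hand side converges to $\int_{\mathbb{R}^3}\hat k\,\partial_\eta\hat\zeta\, d\xi'd\eta$. For the right-hand side, the inner inclusion \eqref{5.15} together with an analogous outer inclusion $\Omega^{\tilde y_h} \subseteq \hat\Omega_{(1+\sigma)h}^{-\varepsilon}$ (proved by applying the degree argument of Theorem \ref{Thm 5.2} to the approximate inverse of $\tilde y_h$) reduces the integration domain to $\hat\Omega_h$ up to error vanishing as $\sigma,\varepsilon\to 0$. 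Changing variables via $\Theta_h\circ z_h$ and invoking $\tilde\mu_h \to \chi_\Omega\lambda$ identifies the limit as $\int_{\mathbb{R}^3}\chi\,(\lambda)^3\,\partial_\eta\hat\zeta\, d\xi'd\eta$, where $\chi$ is the characteristic function of the tilted slab $\{(\xi',\eta):\xi'\in\omega,\ \eta\in(\theta(\xi')-1/2, \theta(\xi')+1/2)\}$. Arbitrariness of $\hat\zeta$ yields $\hat k = \chi\,(\lambda)^3$.

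Finally, testing \eqref{6.19} with $\varphi = \psi_{\tilde m_h}$ produces the identity $2E_h^{mag}(\tilde y_h,\tilde m_h) = h^{-1}\int_{\mathbb{R}^3}\chi_{\Omega^{\tilde y_h}}\tilde m_h \cdot \nabla\psi_{\tilde m_h}\, d\xi$. Rewriting in rescaled coordinates yields
\begin{equation*}
 2E_h^{mag}(\tilde y_h,\tilde m_h) = \int_{\mathbb{R}^3}\widehat{(\chi\tilde m_h)'} \cdot h\nabla_{\xi'}\hat\psi_h\, d\xi'd\eta + \int_{\mathbb{R}^3}\widehat{(\chi\tilde m_h)^3}\,\partial_\eta\hat\psi_h\, d\xi'd\eta,
\end{equation*}
where the first term vanishes by strong-weak convergence, and the second converges to $\int_{\mathbb{R}^3}\hat k\,\chi\,(\lambda)^3\, d\xi'd\eta = \int_\omega|(\lambda)^3|^2\, dx' = 2E^{mag}(\lambda)$, completing the proof. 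The principal obstacle is the identification step: producing the two-sided geometric sandwich of $\Omega^{\tilde y_h}$ via the degree-theoretic argument and absorbing the shallow curvature $h\theta$ into vanishing errors using the uniform bi-Lipschitz bounds of Proposition \ref{Prop1}.
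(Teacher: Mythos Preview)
Your argument is correct in outline, but it takes a genuinely different route from the paper. You perform a pure vertical rescaling $\eta=\xi_3/h$, which preserves the curvature of the midsurface and forces the limiting object $\hat k$ to live on the \emph{tilted} slab $\{(\xi',\eta):\xi'\in\omega,\ |\eta-\theta(\xi')|<\tfrac12\}$; you then need the two-sided sandwich $\hat\Omega^{\varepsilon}_{\tau h}\subseteq\Omega^{\tilde y_h}\subseteq\hat\Omega^{-\varepsilon}_{(1+\sigma)h}$ (plus a careful limit exchange $h\to 0$ before $\sigma,\varepsilon\to 0$, $\tau\to 1$) to transfer the strong convergence of $\tilde\mu_h$ from the $\Theta_h\circ z_h$-coordinates to your vertically rescaled ones. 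The paper instead pulls the \emph{entire} Maxwell problem back through $\Theta_h\circ z_h$: setting $\hat\psi_h:=\tilde\psi_h\circ\Theta_h\circ z_h$ on all of $\mathbb{R}^3$, the shallow-shell geometry is absorbed into the factors $M_h,\kappa_h$ of Lemma~\ref{L3.1}, and one lands directly on the flat cylinder $\Omega$ where $\tilde\mu_h\to\chi_\Omega\lambda$ is immediately usable. The identification of the weak limit $l$ of $h^{-1}\partial_3\hat\psi_h$ then proceeds via the variational characterization in Lemma~\ref{L 6.1} rather than by testing \eqref{6.19} with rescaled $\hat\zeta$: passing to the limit in the Euler--Lagrange equation yields $\int(l-\chi_\Omega(\lambda)^3)\partial_3\varphi\,dx=0$, whence $l-\chi_\Omega(\lambda)^3$ is independent of $x_3$ and hence zero in $L^2(\mathbb{R}^3)$. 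The paper's route thus avoids both the outer inclusion and the tilted-slab bookkeeping entirely; your route is closer in spirit to the flat thin-film analysis of \cite{thinfilms} but pays for this by having to manually reconcile two different changes of variables. A minor point: your bound $\mathcal{L}^3(\Omega^{\tilde y_h})\le Ch$ follows more directly from the incompressibility constraint and the area formula than from $C^0$-convergence.
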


\begin{proof}
    \textbf{Step 1} (Invariance of the energy under rigid motions):
    
    We write again $\Tilde{y}_h = T_h\circ y_h$, $\Tilde{m}_h = Q_h^Tm_h \circ T_h^{-1}$. Also denote by $\psi_h$ and $\Tilde{\psi}_h$ the stray field potentials corresponding to $(y_h,m_h)$ and $(\Tilde{y}_h,\Tilde{m}_h)$ respectively. By Lemma \ref{L 6.1} they exist and are weak solutions to 
    \begin{equation}
        \begin{aligned}
            \Delta \psi_h &= \text{div} (\chi_{\Omega^{y_h}}m_h) \ \ \text{in} \ \mathbb{R}^3,\\
            \Delta \Tilde{\psi}_h &= \text{div} (\chi_{\Omega^{\Tilde{y}_h}}\Tilde{m}_h) \ \ \text{in} \ \mathbb{R}^3.
        \end{aligned}
        \label{6.23}
    \end{equation}

    For every $\varphi \in V^{1,2}(\mathbb{R}^3)$, we obtain by applying the chain rule, Change-of-Variable formula and the weak formulation of \eqref{6.19}
    \begin{equation}
        \begin{aligned}
            \int_{\mathbb{R}^3}\nabla(\psi_h \circ T_h^{-1}) \cdot \nabla \varphi \ d\xi &= \int_{\mathbb{R}^3}Q_h^T\nabla \psi_h \cdot \nabla \varphi \ d\xi\\
            &= \int_{\mathbb{R}^3} \nabla \psi_h \cdot \nabla(\varphi \circ T_h) \ d\xi\\
            &= \int_{\mathbb{R}^3} \chi_{\Omega^{y_h}}m_h \cdot Q_h \nabla\varphi \circ T_h \ d\xi\\
            &= \int_{\mathbb{R}^3} (\chi_{\Omega^{y_h}}Q_h^Tm_h) \circ T_h^{-1} \cdot \nabla\varphi \ d\xi\\
            &= \int_{\mathbb{R}^3} \chi_{\Omega^{\Tilde{y}_h}}\Tilde{m}_h \cdot \nabla \varphi \ d\xi.
        \end{aligned}
        \label{6.24}
    \end{equation}

    Equality \eqref{6.24} shows that $\psi_h \circ T_h^{-1}$ is a weak solution to the second equation in \eqref{6.23}. We can therefore assume that the solutions coincide, namely $\Tilde{\psi}_h = \psi_h \circ T_h^{-1}$. Using this, by the chain rule and Change-of-Variable formula, we obtain
    \begin{equation*}
        \begin{aligned}
            E^{mag}_h(\Tilde{y}_h,\Tilde{m}_h) &= \frac{1}{2h}\int_{\mathbb{R}^3} \abs{\nabla \Tilde{\psi}_h}^2 \ d\xi\\
            &= \frac{1}{2h}\int_{\mathbb{R}^3} \abs{\nabla \psi_h \circ T_h^{-1}}^2 \ d\xi\\
            &= \frac{1}{2h}\int_{\mathbb{R}^3} \abs{\nabla \psi_h}^2 \ d\xi\\
            &= E^{mag}_h(y_h,m_h).
        \end{aligned}
    \end{equation*}

    \textbf{Step 2} (Convergence of the stray field potential):

    Using the weak formulation of \eqref{6.23} with $\varphi = \Tilde{\psi}_h$ we infer
    \begin{equation*}
        \begin{aligned}
            E_h^{mag}(\Tilde{y}_h, \Tilde{m}_h) &= \frac{1}{2h}\int_{\mathbb{R}^3} \abs{\nabla \Tilde{\psi}_h}^2 \ d\xi\\
            &= \frac{1}{2h}\int_{\mathbb{R}^3} \chi_{\Omega^{\Tilde{y}_h}}\Tilde{m}_h.
        \end{aligned}
    \end{equation*}

    Setting $\hat{\psi}_h = \Tilde{\psi}_h \circ \Theta_h \circ z_h$ and $\Tilde{\mu}_h = (\chi_{\Omega^{\Tilde{y}_h}}\Tilde{m}_h) \circ \Theta_h \circ z_h$ and using $\Theta_h(z_h(\mathbb{R}^3)) = \mathbb{R}^3$, by the Change-of-Variable formula, there holds
    \begin{equation}
        \begin{aligned}
            \frac{1}{2h}\int_{\mathbb{R}^3} \chi_{\Omega^{\Tilde{y}_h}}\Tilde{m}_h &= \frac{1}{2}\int_{\mathbb{R}^3} \Tilde{\mu}_h \cdot (\nabla \Tilde{\psi}) \circ \Theta_h \circ z_h \ \kappa_h \ dx\\
            &= \frac{1}{2}\int_{\mathbb{R}^3}\Tilde{\mu}_h \cdot M_h^T\nabla_h \hat{\psi}_h \ \kappa_h \ dx.
        \end{aligned}
        \label{6.27}
    \end{equation}

    We now want to show that $\hat{\psi}_h \to 0$ in $V^{1,2}(\mathbb{R}^3)$. For this, notice that by \eqref{3.4} there holds $\det ((\nabla \Tilde{\psi}) \circ \Theta_h \circ z_h) = 1$. Using this, together with \eqref{4.38}, we obtain
    \begin{equation}
        \begin{aligned}
            \mathcal{L}^3(\Omega^{\Tilde{y}_h}) &= \int_{\Omega}\det \nabla \Tilde{y}_h \ dx\\
            &= h \int_\Omega \kappa_h \ dx\\
            &\leq h C
        \end{aligned}
        \label{6.28}
    \end{equation}

    By \eqref{6.28} and \eqref{6.20} together with the fact that $\Tilde{m}_h$ has values in $\mathbb{S}^2$, we conclude that
    \begin{equation}
        \begin{aligned}
            \int_{\mathbb{R}^3} \abs{\nabla \Tilde{\psi}_h}^2 \ d\xi &\leq \int_{\mathbb{R}^3}\abs{\chi_{\Omega^{\Tilde{y}_h}}\Tilde{m}_h}\\
            &= \mathcal{L}^3(\Omega^{\Tilde{y}_h})\\
            &\leq hC.
        \end{aligned}
        \label{6.29}
    \end{equation}

    By \eqref{4.39}, the Change-of-Variable formula and \eqref{6.29} we find
    \begin{align*}
        C\norm{M_h^T\nabla_h \hat{\psi}_h}_{L^2(\mathbb{R}^3;\mathbb{R}^3)} &\leq \int_{\mathbb{R}^3}\abs{M_h^T\nabla_h \hat{\psi}_h}^2\kappa_h \ dx\\
        &= \frac{1}{h}\int_{\mathbb{R}^3}\abs{\nabla \Tilde{\psi}_h}^2 \ dx\\
        &\leq C.
    \end{align*}

    From \eqref{4.41b} it follows that we can find a constant $C$ such that
    \begin{equation*}
        C\norm{\nabla_h \hat{\psi}_h}_{L^2(\mathbb{R}^3;\mathbb{R}^3)} \leq \norm{M_h^T\nabla_h \hat{\psi}_h}_{L^2(\mathbb{R}^3;\mathbb{R}^3)},
    \end{equation*}
    
    which implies
    \begin{equation*}
        \norm{\nabla_h\hat{\psi}_h}_{L^2(\mathbb{R}^3;\mathbb{R}^3)} \leq C.
    \end{equation*}

    From this, analogously to the proof of \cite[Proposition 4.5]{BrescianiIncompressible}, it follows that
    \begin{equation}
        \hat{\psi}_h \to 0 \ \ \text{in} \ V^{1,2}(\mathbb{R}^3).
        \label{6.31}
    \end{equation}
    It also follows, that there exists $l \in L^2(\mathbb{R}^3)$, such that up to subsequences
    \begin{equation}
        \frac{1}{h}\partial_3\hat{\psi}_h \rightharpoonup l \ \ \text{in} \ L^2(\mathbb{R}^3).
        \label{6.32}
    \end{equation}

    By \eqref{6.27} and then \eqref{5.8}, \eqref{6.31} and \eqref{6.32} together with \eqref{4.42} and \eqref{4.43}, we conclude
    \begin{equation*}
        \begin{aligned}
            \lim_{h\to 0^+}E_h^{mag}(\Tilde{y}_h,\Tilde{m}_h) &= \frac{1}{2}\int_{\mathbb{R}^3}\Tilde{\mu}_h \cdot M_h^T\nabla_h \hat{\psi}_h \ \kappa_h \ dx\\
            &= \int_{\mathbb{R}^3}\chi_\Omega (\lambda)^3 l \ dx.
        \end{aligned}
    \end{equation*}

    \textbf{Step 3} (Identification of the limit):

    To conclude the proof, it is left to show that $l = \chi_\Omega(\lambda)^3$.

    Recall that by Lemma \ref{L 6.1},
    \begin{equation*}
        \Tilde{\psi}_m \in \text{argmin}\big\{ \int_{\mathbb{R}^3}\abs{\nabla \varphi - \chi_{\Omega^{\Tilde{y}_h}}\Tilde{m}_h}^2 \ d\xi : \varphi \in V^{1,2}(\mathbb{R}^3) \big\}.
    \end{equation*}

    Using the Change-of-Variable formula, we obtain
    \begin{equation}
        \Tilde{\psi}_m \in \text{argmin}\big\{ \int_{\mathbb{R}^3}\abs{M_h^T\nabla_h \varphi - \Tilde{\mu}_h}^2\kappa_h \ dx : \varphi \in V^{1,2}(\mathbb{R}^3) \big\}.
        \label{6.34}
    \end{equation}

    The weak formulation of the Euler-Lagrange equations corresponding to \eqref{6.34} is
    \begin{equation}
        \forall \ \varphi \in V^{1,2}(\mathbb{R}^3), \ \int_{\mathbb{R}^3}(M_h^T\nabla_h \hat{\psi}_h - \Tilde{\mu}_h)\cdot M_h\nabla_h \varphi\kappa_h \ dx.
        \label{6.35}
    \end{equation}

    Multiplying \eqref{6.35} by h and letting $h \to 0^+$, again by \eqref{5.8}, \eqref{6.31} and \eqref{6.32} there holds
    \begin{equation}
        \int_{\mathbb{R}^3}(l - \chi_\Omega(\lambda)^3 \partial_3 \varphi \ dx = 0.
        \label{6.36}
    \end{equation}

    Therefore $l - \chi_\Omega(\lambda)^3$ does not depend on $x_3$ as $\varphi$ in \eqref{6.36} is arbitrary. Since $l \in L^2(\mathbb{R}^3)$ and $\chi_\Omega (\lambda)^3 \in L^2(\mathbb{R}^3)$, it follows that $l = \chi_\Omega \lambda$.
    
\end{proof}

 \section{Recovery sequence and Gamma-convergence}
 \label{s:8}

 In this section we prove Theorem \ref{thm2} and Corollary \ref{col1}. For both the existence of a recovery sequence and the proof of the $\Gamma$-convergence result, we adapt \cite[Section 5]{BrescianiIncompressible} to the setting of shallow shells. 

 \subsection*{Recovery sequence for a smooth sequence}

 We follow the strategy of first constructing a smooth recovery sequence and then arguing by density to obtain the desired recovery sequence.
 \begin{theorem}
     (Smooth recovery sequence)
     \label{Thm 7.1}\\
     Let $u \in C^\infty(\overline{\omega};\mathbb{R}^2)$, $v \in C^\infty(\overline{\omega}) $ and $\lambda \in C^\infty(\overline{\omega};\mathbb{S}^2) $. Let $a, b \in C^\infty(\overline{\omega};\mathbb{R}^3) $ satisfy

    \begin{equation*}
        \begin{aligned}
            &\text{tr}\bigg(\left(\begin{array}{c|c}
    	   \text{sym}(\nabla 'u + \nabla ' v \otimes \nabla ' \theta) & 0'\\ 
    	   \hline 
    	   (0')^T & 0
        \end{array}\right)
        + a \otimes e_3 + e_3 \otimes a \bigg)\\
        = \ &\text{div}'(u) + \nabla'v \cdot \nabla'\theta + 2 (a)^3 = 0 \ \ \text{in} \ \omega,
        \end{aligned}
    \end{equation*}

    \begin{equation*}
        \begin{aligned}
            &\text{tr}\bigg(-\left(\begin{array}{c|c}
    	   (\nabla')^2v & 0'\\ 
    	   \hline 
    	   (0')^T & 0
        \end{array}\right) 
        + b \otimes e_3 + e_3 \otimes b \bigg)\\
        = \ &-\Delta'v + 2 (b)^3 = 0 \ \ \text{in} \ \omega.
        \end{aligned}
    \end{equation*}

    Then, there exists a sequence of admissible states $((y_h,m_h))_h \subseteq \mathcal{Q}$ such that, as $h \to 0^+$:
    \begin{align*}
        &y_h \to \Theta_0 \circ z_0 \ \ \text{in} \ W^{1,p}(\Omega;\mathbb{R}^3),\\
        &u_h := U_h^{y_h} \to u \ \ \text{in} \ W^{1,2}(\omega;\mathbb{R}^2),\\
        &v_h := V_h^{y_h} \to v \ \ \text{in} \ W^{1,2}(\omega),
        \label{7.5}\\
        &m_h \circ y_h \to \lambda \ \ \text{in} \ L^{r}(\mathbb{R}^3;\mathbb{R}^3) \ \text{for every} \ 1 \leq r < \infty,\\
        &\mu_h := (\chi_{\Omega^{y_h}}m)\circ \Theta_h \circ z_h \to \chi_\Omega \lambda \ \ \text{in} \ L^{r}(\Omega;\mathbb{R}^3) \ \text{for every} \ 1 \leq r < \infty.
    \end{align*}
    Moreover, there holds:
    \begin{equation*}
        \begin{aligned}
            &\lim_{h \to 0^+}E_h^{el}(y_h,m_h)\\
            &= \frac{1}{2}\int_\omega Q_2^{inc}\bigg(\left(\begin{array}{c|c}
    	\text{sym}(\nabla ' u + \nabla'v \otimes \nabla'\theta) & 0'\\ 
    	\hline 
    	(0')^T & 0
    \end{array}\right) 
    + a \otimes e_3 + e_3 \otimes a, \lambda \bigg) \ dx'\\
    &+ \frac{1}{24}\int_\omega Q_2^{inc}\bigg(-\left(\begin{array}{c|c}
    	(\nabla')^2v & 0'\\ 
    	\hline 
    	(0')^T & 0
    \end{array}\right) 
    + b \otimes e_3 + e_3 \otimes b, \lambda \bigg) \ dx';
        \end{aligned}
    \end{equation*}
    \begin{align*}
        &\lim_{h \to 0^+}E_h^{exc}(y_h,m_h) = E^{exc}(\lambda);\\
        &\lim_{h \to 0^+}E_h^{mag}(y_h,m_h) = E^{mag}(\lambda);
    \end{align*}
 \end{theorem}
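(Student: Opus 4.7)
Following \cite{Velcic} and \cite[Proposition 5.1]{BrescianiIncompressible}, I would build the recovery deformation as a perturbation of $\Theta_h \circ z_h$ of the form
\begin{align*}
    y_h(x) := \Theta_h(z_h(x)) &+ h^{\frac{\beta}{2}-1} v(x') e_3 + h^{\frac{\beta}{2}}\bigl(u(x'),0\bigr)^T - h^{\frac{\beta}{2}-1} x_3 \bigl(\nabla' v(x'),0\bigr)^T\\
    &+ h^{\frac{\beta}{2}}\bigl(x_3 a(x') + \tfrac{x_3^2}{2} b(x')\bigr) + h^{\frac{\beta}{2}+1}\varphi_h(x).
\end{align*}
The terms involving $u$ and $v$ install the correct averaged displacements at scale $h^{\beta/2}$, the term $-x_3(\nabla'v,0)^T$ is the classical Kirchhoff--Love correction zeroing the leading $x_3$-dependence in the strain, the contribution $a + x_3 b$ provides the through-thickness corrections in the third row/column that match the construction of $Q_2^{inc}$, and $\varphi_h$ is a higher-order perturbation enforcing $\det\nabla y_h \equiv 1$ identically.

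\textbf{Admissibility and displacement convergences.} The correction $\varphi_h$ is constructed with $\|\nabla\varphi_h\|_{L^\infty} = o(1)$ by an implicit-function/fixed-point argument as in \cite{LiChe12}; this is precisely where the trace conditions on $a$ and $b$ enter, as they ensure that the obstruction to solvability at the leading orders $h$ and $h x_3$ vanishes. With this correction, $\det\nabla y_h \equiv 1 > 0$ by design and, since $y_h$ is a $W^{1,\infty}$-small perturbation of the $C^1$-diffeomorphism $\Theta_h \circ z_h$, global injectivity on $\overline{\Omega}$ follows from \cite{CiraletElasticity} for $h$ small, so $(y_h,m_h) \in \mathcal{Q}$. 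Substituting the ansatz into \eqref{3.23}--\eqref{3.24}, the convergences $U_h^{y_h}\to u$ in $W^{1,2}(\omega;\mathbb{R}^2)$ and $V_h^{y_h}\to v$ in $W^{1,2}(\omega)$ follow directly from smoothness and the choice of scalings, while $y_h \to \Theta_0 \circ z_0$ in $W^{1,p}(\Omega;\mathbb{R}^3)$ since the perturbation is $o(1)$ in that norm.

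\textbf{Energy limits.} For the elastic energy, frame indifference \ref{W1} allows to replace $\nabla_h y_h M_h$ by $R_h^T\nabla_h y_h M_h = \mathrm{Id} + h^{\beta/2}G_h$, where $R_h \in SO(3)$ is the rotation induced by the Kirchhoff--Love term and $G_h$ is uniformly bounded. Using \eqref{4.6}--\eqref{4.8} one checks that $G_h \to G$ uniformly, with $G$ having $2\times 2$ block $\text{sym}(\nabla'u + \nabla'v\otimes\nabla'\theta) - x_3(\nabla')^2 v$ and third row/column built from $a(x') + x_3 b(x')$; the Taylor expansion \eqref{3.9} together with \ref{W5}, \eqref{4.42} and dominated convergence yield the claimed elastic-energy limit after integrating the $x_3^2$-term. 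For the magnetization, I would set $m_h := \lambda \circ \pi \circ (\Theta_h \circ z_h)^{-1} \circ y_h^{-1}$ on $\Omega^{y_h}$, so that $m_h \circ y_h \to \lambda$ uniformly on $\Omega$; the exchange energy reduces via change of variables and Lemma \ref{L3.1} to $\alpha\int_\omega |\nabla'\lambda|^2\,dx'$, and the magnetostatic-energy convergence is obtained by repeating the argument of Theorem \ref{Thm 6.3}, identifying the weak limit of $h^{-1}\partial_3 \hat\psi_h$ with $\chi_\Omega(\lambda)^3$ via the minimization characterization of $\hat\psi_h$ and the uniform convergence $\mu_h \to \chi_\Omega\lambda$.

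\textbf{Main obstacle.} The principal difficulty is the construction of $\varphi_h$ with $\|\nabla\varphi_h\|_{L^\infty} = o(1)$ and $\det\nabla y_h \equiv 1$. Expanding the determinant in powers of $h$, the vanishing-trace conditions on $a$ and $b$ kill the first two orders, reducing the incompressibility constraint to a divergence-type equation that can be inverted in $C^\infty$ by a Banach fixed-point iteration, as in \cite[Proposition 5.1]{BrescianiIncompressible}. The shallow-shell contribution $\nabla(\Theta_h \circ z_h) = \mathrm{Id} - hA + O(h^2)$ from \eqref{4.7} introduces additional $h$-order terms, but these interact with $\varphi_h$ only at order $h^{\beta/2+2}$ and can be absorbed into the remainder; consequently the plate-case construction adapts to the shallow-shell setting essentially verbatim via the diffeomorphism $\Theta_h \circ z_h$.
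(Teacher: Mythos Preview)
Your overall strategy matches the paper's, but the explicit ansatz has a scaling error that breaks the argument. The Kirchhoff--Love term $-h^{\beta/2-1}x_3(\nabla'v,0)^T$ and the corrections $h^{\beta/2}(x_3 a + \tfrac{x_3^2}{2}b)$ are each too large by a factor of $h$: applying $\nabla_h = (\nabla',\tfrac{1}{h}\partial_3)$ to the former yields $-h^{\beta/2-2}(\nabla'v,0)^T\otimes e_3$ in $\nabla_h y_h$, which has no antisymmetric partner at that order and hence cannot be removed by frame indifference, so $h^{-\beta/2}(F_h^TF_h - \mathrm{Id})$ blows up. The paper takes
\[
\overline{y}_h = \Theta_h\circ z_h + h^{\beta/2}\begin{pmatrix}u\\0\end{pmatrix} + h^{\beta/2-1}\begin{pmatrix}0'\\v\end{pmatrix} - h^{\beta/2}x_3\begin{pmatrix}\nabla'v\\0\end{pmatrix} + 2h^{\beta/2+1}x_3\tilde a + 2h^{\beta/2+1}x_3^2 b,
\]
which puts the antisymmetric block at order $h^{\beta/2-1}$ and the $a,b$ contributions at order $h^{\beta/2}$ in $\nabla_h\overline{y}_h M_h$; the shell geometry enters by setting $a := \tilde a + (0',\nabla'v\cdot\nabla'\theta)^T$, which absorbs the cross term coming from $M_h = \mathrm{Id} + hA + O(h^2)$.

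The incompressibility step is also different in kind. You add a generic corrector $h^{\beta/2+1}\varphi_h$ and appeal to an unspecified ``divergence-type'' fixed-point argument; the paper instead follows \cite[Proposition~5.1]{BrescianiIncompressible} and \cite{LiChe12} and reparametrizes only the thickness variable, setting $y_h(x',x_3):=\overline{y}_h(x',\eta_h(x',x_3))$ with $\eta_h(x',\cdot)$ the solution of the scalar ODE
\[
\partial_3\eta_h(x',x_3) = \bigl(\det\overline F_h(x',\eta_h(x',x_3))\bigr)^{-1},\qquad \eta_h(x',0)=0.
\]
The trace conditions on $a,b$ give $\det\overline F_h = 1 + O(h^{\beta/2+1})$, so $\|\eta_h - x_3\|_{C^1}\le Ch^{\beta/2+1}$ and $\det(\nabla_h y_h M_h)=\det\overline F_h(\cdot,\eta_h)\,\partial_3\eta_h\equiv 1$ exactly; no global inversion or iteration is needed. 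Injectivity is then obtained, as you say, from \cite{CiraletElasticity}, after transferring the estimate $\|\nabla_h\varphi_h\|_{C^0}\le Ch^{\beta/2-1}$ for $\varphi_h:=y_h-\Theta_h\circ z_h$ to the physical shell via the diffeomorphism $\Theta_h\circ z_h$.
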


 \begin{proof}
     We adapt the proof of \cite[Proposition 5.1]{BrescianiIncompressible} for incompressible magnetoelastic shallow shells.

     \textbf{Step 1} (Gradient of the recovery sequence):
     
     We make the following ansatz for the recovery sequence:
     \begin{equation*}
        \begin{aligned}
            \overline{y}_h := \ &\Theta_h\circ z_h + h^{\frac{\beta}{2}}\begin{pmatrix}u\\0\end{pmatrix} + h^{\frac{\beta}{2}-1}\begin{pmatrix}0'\\v\end{pmatrix} - h^{\frac{\beta}{2}}x_3\begin{pmatrix}\nabla'v\\0\end{pmatrix}\\
            &+ 2h^{\frac{\beta}{2}+1}x_3\Tilde{a} + 2h^{\frac{\beta}{2}+1}x_3^2b.
        \end{aligned}
     \end{equation*}

Using the fact that $\nabla \overline{y}_h = \nabla_h \overline{y}_h \nabla z_h$, we obtain
\begin{equation*}
    \nabla_h \overline{y}_h M_h = \nabla \overline{y}_h (\nabla z_h)^{-1} M_h.
\end{equation*}
Further, there holds
\begin{align*}
    (\nabla z_h)^{-1} M_h &= (\nabla z_h)^{-1}((\nabla \Theta_h)\circ z_h)^{-1} = ((\nabla \Theta_h) \circ z_h \nabla z_h)^{-1} = (\nabla(\Theta_h \circ z_h))^{-1}.
\end{align*}

Together, this yields
\begin{equation}
    \nabla_h \overline{y}_h M_h = \nabla \overline{y}_h (\nabla(\Theta_h \circ z_h))^{-1}.
    \label{7.12}
\end{equation}

Next, we calculate $\nabla_h \overline{y}_h M_h$ up to order $h^{\frac{\beta}{2}}$. We use \eqref{7.12} for the first term to obtain
\begin{equation*}
    \nabla_h(\Theta_h \circ z_h) M_h = \nabla(\Theta_h \circ z_h) (\nabla(\Theta_h \circ z_h))^{-1} = Id.
\end{equation*}

Recall that by \eqref{4.8}, there holds
\begin{equation*}
    M_h = Id + h A \pi + h^2 G_h \circ z_h,
\end{equation*}

with $A(x') = \begin{pmatrix}
        0 & 0 & \partial_1\theta(x')\\
        0 & 0 & \partial_2\theta(x')\\
        -\partial_1\theta(x') & -\partial_2\theta(x') & 0
    \end{pmatrix}$.

Using this we obtain
\begin{equation}
    \begin{aligned}
        \nabla_h \overline{y}_h M_h = \ & Id + h^{\frac{\beta}{2}}\left(\begin{array}{c|c}
    	\nabla ' u + \nabla'v \otimes \nabla'\theta & 0'\\ 
    	\hline 
    	(0')^T & 0
    \end{array}\right)\\
    + &h^{\frac{\beta}{2}-1}\left(\begin{array}{c|c}
    	0'' & -\nabla'v\\ 
    	\hline 
    	\nabla'v^T  & 0
    \end{array}\right) - h^{\frac{\beta}{2}}x_3 \left(\begin{array}{c|c}
    	(\nabla')^2 v & 0'\\ 
    	\hline 
    	(0')^T & 0
    \end{array}\right)\\
    + &2h^{\frac{\beta}{2}}a \otimes e_3 + 2 h^{\frac{\beta}{2}}x_3 b \otimes e_3 + \mathcal{O}(h^{\frac{\beta}{2}+1}),
    \end{aligned}
    \label{7.14}
\end{equation}
\textcolor{Green}{where $a := \Tilde{a} + ((0')^T, \nabla' v \cdot \nabla' \theta)^T$.}\\

\textbf{Step 2} (Incompressibility constraint):

By the incompressibility constraint \eqref{3.4}, $F_h := \nabla_h y_h M_h$ needs to satisfy $\det(F_h) = 1$. To achieve this, we proceed as follows:

Call $\overline{F}_h := \nabla_h \overline{y}_h M_h$ and write $\overline{F}_h = Id + G_h$.

With the fact that
\begin{equation*}
    (Id + G_h)^T(Id + G_h) = Id + 2 \text{sym}G_h + G_h^TG_h,
\end{equation*}

together with $G_h^TG_h = \mathcal{O}(h^{\frac{\beta}{2}+1})$ , as $\frac{\beta}{2} > 3$, we conclude with \eqref{7.14} by a calculation that
\textcolor{Green}{
\begin{equation}
    \overline{F}_h^T\overline{F}_h = Id + 2 h^{\frac{\beta}{2}}(A + x_3 B) + O(h^{\frac{\beta}{2} +1}),
    \label{7.15}
\end{equation}
}
with 
\begin{align*}
    A &= \left(\begin{array}{c|c}
    	\text{sym}(\nabla ' u + \nabla'v \otimes \nabla'\theta) & 0'\\ 
    	\hline 
    	(0')^T & 0
    \end{array}\right) + a \otimes e_3 + e_3 \otimes a,\\
    B &= -\left(\begin{array}{c|c}
    	(\nabla')^2v & 0'\\ 
    	\hline 
    	(0')^T & 0
    \end{array}\right) + b \otimes e_3 + e_3 \otimes b.
\end{align*}

Analogous to \cite[Proposition 5.1]{BrescianiIncompressible}, we use the fact that for every $M \in \mathbb{R}^{3\times 3} $ there holds
\begin{equation*}
    \det (Id + M) = 1 + \text{tr}(M) +  \text{tr}(\text{cof}(M)) + \det (M),
\end{equation*}

to conclude by \eqref{7.15} \textcolor{Green}{together with the assumption $\text{tr}(A) = \text{tr}(B) = 0$} that
\begin{equation*}
    \det(\overline{F}_h^T\overline{F}_h) = 1 + 2h^{\beta}(P + x_3Q + x_3^2R) + \mathcal{O}(h^{\frac{\beta}{2}+1}),
\end{equation*}
where $P, Q$ and $R$ are polynomials in $x'$ which depend on $u, v, \theta, a$ and $b$.

Using $\det(\overline{F}_h^T\overline{F}_h) = \det(\overline{F}_h)^2$ together with the Taylor expansion\\
$(1 + 2x)^{\frac{1}{2}} = 1 + x + \mathcal{O}(x^2)$, we infer
\begin{equation}
    \det(\overline{F}_h) = 1 + h^\beta(P + x_3Q + x_3^2R) + \mathcal{O}(h^{\frac{\beta}{2}+1}).
    \label{7.17}
\end{equation}

We define $y_h(x',x_3) := \overline{y}_h(x',\eta_h(x', x_3))$ for $x \in \Omega$, for a function $\eta_h$ which we will specify later on and set $F_h := \nabla_h y_h M_h$. By writing this in terms of the map $\phi: (x',x_3) \mapsto (x', \eta_h(x',x_3))$ there holds
\begin{align*}
    F_h &= \nabla_h y_h M_h\\
    &= ((\nabla_h \overline{y}_h) \circ \phi)M_h M_h^{-1} \nabla \phi M_h.
\end{align*}

With $\nabla \phi = \begin{pmatrix}
        1 & 0 & 0\\
        0 & 1 & 0\\
        \partial_1\eta_h & \partial_2\eta_h & \partial_3\eta_h
    \end{pmatrix}$,
we conclude that
\begin{equation*}
    \det(M_h^{-1} \nabla \phi M_h) = \det(\nabla \phi) = \partial_3 n_h.
\end{equation*}

To summarize, we have shown that for all $x \in \Omega$ there holds
\begin{equation}
    \begin{aligned}
        &F_h(x',x_3) = \overline{F}_h(x',\eta_h(x',x_3)) M_h^{-1}(x',x_3) \nabla \phi(x',x_3) M_h (x',x_3)\\
    &\det(F_h(x',x_3)) = \det(\overline{F}_h(x', \eta_h(x',x_3)))\partial_3 \eta_h(x',x_3).
    \end{aligned}
     \label{7.20}
\end{equation}

\textcolor{Green}{Following \cite[(5.17)]{BrescianiIncompressible}, by} the fact that $\beta > 2$ and \eqref{7.17}, there holds $\det(\overline{F}_h) = 1 + \mathcal{O}(h^{\frac{\beta}{2} + 1}) $ and therefore \textcolor{Green}{$\frac{1}{2} \leq \det(\overline{F}_h) \leq 2$} for $h$ small enough. With this, we can define
\begin{equation*}
    \Phi_h(x',x_3) := (\det(\overline{F}_h(x',x_3)))^{-1} = (1 + h^\beta(P + x_3Q + x_3^2R) + \mathcal{O}(h^{\frac{\beta}{2}+1}))^{-1},
\end{equation*}
for every $(x',x_3) \in \omega \times (-1,1)$. By \eqref{7.20}, if $\eta_h$ is a solution of the Cauchy problem
\begin{equation}
    \left\{
    \begin{aligned}
        \partial_3 \eta_h(x',x_3) &= \Phi_h(x',\eta_h(x',x_3)), \ \text{in} \ \big(-\frac{1}{2}, \frac{1}{2}\big)\\
        \eta_h(x',0) &= 0,
    \end{aligned}
    \right.
    \label{7.22}
    \end{equation}

then $\det (\nabla_h y_h M_h) = \det(F_h) = 1$ in $\Omega$. By regarding $x' \in \omega$ as a parameter and $x_3 \in (-\frac{1}{2}, \frac{1}{2})$ as a variable, \eqref{7.22} is an ordinary differential equation. 

From \cite[Proposition 5.1, Steps 2 and 3]{BrescianiIncompressible}, we obtain the following two results:

There exists a unique solution $\eta_h$ of \eqref{7.22} satisfying the estimates
\begin{equation}
    \norm{\partial_3 \eta_h - 1}_{C^0(\overline{\Omega})} \leq Ch^{\frac{\beta}{2}+1}, \ \norm{\eta_h - x_3}_{C^0(\overline{\Omega})} \leq Ch^{\frac{\beta}{2}+1}, \ \norm{\nabla'\eta_h}_{C^0(\overline{\Omega})} \leq Ch^{\frac{\beta}{2}+1},
    \label{7.23}
\end{equation}

and there holds
\begin{equation*}
    F_h^T(x)F_h(x) = \overline{F}_h^T(x)\overline{F}_h(x) + \mathcal{O}(h^{\frac{\beta}{2} + 1}).
\end{equation*}

\textbf{Step 3} (Injectivity of the recovery sequence):

By construction, $y_h \in C^1(\overline{\Omega};\mathbb{R}^3)$ and satisfies $\det(\nabla y_h) = \det\nabla(\Theta_h \circ z_h) = h\kappa_h$ by the incompressibilty constraint. Recall that by the ansatz for $\overline{y}_h$ we have
\begin{equation*}
        \begin{aligned}
            y_h = \ &\Theta_h \circ z_h \circ \phi + h^{\frac{\beta}{2}}\begin{pmatrix}u\\0\end{pmatrix} + h^{\frac{\beta}{2}-1}\begin{pmatrix}0'\\v\end{pmatrix} - h^{\frac{\beta}{2}}\eta_h\begin{pmatrix}\nabla'v\\0\end{pmatrix}\\
            &+ 2h^{\frac{\beta}{2}+1}\eta_ha + 2h^{\frac{\beta}{2}+1}\eta_h^2b.
        \end{aligned}
     \end{equation*}

Setting $\varphi_h := y_h - \Theta_h \circ z_h$, we compute 

\textcolor{Green}{\begin{equation*}
    \begin{aligned}
        \nabla_h\varphi_h = \ &S + h^{\frac{\beta}{2}}\left(\begin{array}{c|c}
    	\nabla ' u + \nabla'v \otimes \nabla'\theta & 0'\\ 
    	\hline 
    	(0')^T & 0
    \end{array}\right)\\
    + \ &h^{\frac{\beta}{2}-1}e_3 \otimes \begin{pmatrix}\nabla'v\\0\end{pmatrix} - h^{\frac{\beta}{2}}\begin{pmatrix}\nabla'v\\0\end{pmatrix} \otimes \nabla_h\eta_h - h^{\frac{\beta}{2}}\eta_h \left(\begin{array}{c|c}
    	(\nabla')^2 v & 0'\\ 
    	\hline 
    	(0')^T & 0
    \end{array}\right)\\
    + \ &2h^{\frac{\beta}{2}}a \otimes \nabla_h\eta_h + 2h^{\frac{\beta}{2}+1}\eta_h\left(\begin{array}{c|c}
    	\nabla'a & 0\end{array}\right)\\
    + \ &2 h^{\frac{\beta}{2}}\eta_h b \otimes \nabla_h\eta_h + 2h^{\frac{\beta}{2}+1}\eta_h^2\left(\begin{array}{c|c}
    	\nabla'b & 0\end{array}\right),
    \end{aligned}
\end{equation*}
where, recalling that for $x \in \Omega$, $\Theta_h(z_h(x)) = (x',h \theta(x')) + hx_3n_h(x')$ we wrote
\begin{align*}
    S :=& \ \nabla_h (\Theta_h \circ z_h \circ \phi - \Theta_h \circ z_h)\\
    =& \ h\left(\begin{array}{c|c|c}
    	\partial_1 n_h (\eta_h - x_3) + n_h\partial_1 \eta_h & \partial_2 n_h (\eta_h - x_3) + n_h\partial_2 \eta_h & \frac{1}{h} n_h(\partial_3 \eta - 1)\end{array}\right).
\end{align*}
Using that by Proposition \ref{Prop1}, $\partial_in_h$ is bounded by some constant and the estimates \eqref{7.23}, we obtain
\begin{equation}
    \norm{\nabla_h \varphi_h}_{C^0(\overline{\Omega};\mathbb{R}^{3 \times 3})} \leq Ch^{\frac{\beta}{2} - 1}.
    \label{7.27}
\end{equation}
Setting $\psi_h := \varphi_h \circ (\Theta_h \circ z_h)^{-1}_{|\hat{\Omega}_h}$ together with the fact that $\nabla \Theta^{-1}$ is bounded (see \eqref{4.8}), we find
\begin{equation*}
    \begin{aligned}
        \norm{\nabla \psi_h}_{C^0(\overline{\hat{\Omega}}_h;\mathbb{R}^{3 \times 3})} &= \norm{\nabla \varphi_h \circ (\Theta_h \circ z_h)^{-1} \nabla (\Theta_h \circ z_h)^{-1}}_{C^0(\overline{\hat{\Omega}}_h;\mathbb{R}^{3 \times 3})}\\
        &= \norm{\nabla \varphi_h \circ (\Theta_h \circ z_h)^{-1} (\nabla z_h^{-1}\nabla z_h) \nabla (\Theta_h \circ z_h)^{-1}}_{C^0(\overline{\hat{\Omega}}_h;\mathbb{R}^{3 \times 3})}\\
        &= \norm{\nabla_h \varphi_h \circ (\Theta_h \circ z_h)^{-1} \nabla z_h \nabla (\Theta_h \circ z_h)^{-1}}_{C^0(\overline{\hat{\Omega}}_h;\mathbb{R}^{3 \times 3})}\\
        &\leq Ch^{\frac{\beta}{2}-1}.
    \end{aligned}
\end{equation*}}

With this, analogously to step 4 in the proof of \cite[Proposition 5.1]{BrescianiIncompressible} using the same arguments from \cite[Theorem 5.5-1]{CiraletElasticity}, injectivity of $y_h$ follows.

Steps 5 and 6 in the proof of \cite[Proposition 5.1]{BrescianiIncompressible} finish this proof by providing the necessary convergences and require no adaptation.
     
 \end{proof}
With Theorem \ref{Thm 7.1}, the proof of Theorem \ref{thm2} follows by the same density argument as conducted in \cite[Theorem 3.2]{BrescianiIncompressible} and will therefore be omitted.

\subsection*{Gamma-convergence}

We conclude this section by showing the $\Gamma$-convergence result stated in Corollary \ref{col1}. The proof relies on our two main results, namely, compactness and lower bound, as well as the construction of recovery sequences. We present an adaptation of the proof of \cite[Corollary 3.1]{BrescianiIncompressible} for shallow shells.

\begin{proof}[Proof of Corollary \ref{col1}]
    For most parts we refer to the proof of \cite[Corollary 3.1]{BrescianiIncompressible}. We will only conduct the argument which requires an adaptation for the setting of shallow shells.

    Consider the translation motion $\check{T}_h$ defined by 
    \begin{equation*}
        \check{T}_h(\xi) := \xi - \check{c}_h,
    \end{equation*}
    for every $\xi \in \mathbb{R}^3$, where
    \begin{equation*}
        \check{c}_h := \fint_\Omega (y_h - \Theta_h\circ z_h) \ dx.
    \end{equation*}
    We set $\check{y}_h := \check{T}_h \circ y_h = y_h - \check{c}_h$ and $\check{m}_h := m_h \circ \check{T}_h^{-1}$. By arguments from the proof of Theorem \ref{Thm 5.1}, it follows that
    \begin{equation*}
    \begin{aligned}
        &\check{u}_h := U_h^{\check{y}_h} \rightharpoonup \check{u} \ \text{in} \ W^{1,2}(\omega;\mathbb{R}^2),\\
        &\check{v}_h := V_h^{\check{y}_h} \rightharpoonup \check{v} \ \text{in} \ W^{1,2}(\omega),
    \end{aligned}
    \end{equation*}
    for some $\check{u} \in W^{1,2}(\omega;\mathbb{R}^2)$ and $\check{v} \in W^{1,2}(\omega)$. Then, by an argument from the proof of Theorem \ref{Thm 5.2} we find $\check{\lambda} \in W^{1,2}(\omega;\mathbb{S}^2)$, such that
    \begin{equation*}
        \begin{aligned}
            &\check{m}_h \circ \check{y}_h \to \lambda \ \text{in} \ L^r(\Omega;\mathbb{R}^3) \ \text{for every} \ 1 \leq r < \infty,\\
            &\check{\mu}_h := \mathcal{M}_h(\check{y}_h, \check{m}_h) \to \chi_\Omega\check{\lambda} \ \text{in} \ L^r(\Omega;\mathbb{R}^3) \ \text{for every} \ 1 \leq r < \infty.
        \end{aligned}
    \end{equation*}
    It remains to show that $\check{\lambda} = \lambda$ holds. For this, we first deduce analogously to \cite{BrescianiIncompressible},
    \begin{equation}
        \abs{\check{c}_h} \leq Ch^{\frac{\beta}{2}-1}.
        \label{7.31}
    \end{equation}
    Let $\varphi \in C_c^\infty(\omega;\mathbb{R}^3)$ and call $\overline{\varphi}$ its extension to the whole space by zero. By the Change-of-Variable formula and the identity $\chi_{\Omega^{\check{y}_h}}\check{m}_h = (\chi_{\Omega^{y_h}}m_h) \circ \check{T}_h^{-1}$, we obtain
    \begin{equation}
        \begin{aligned}
            \int_{\mathbb{R}^3}\check{\mu}_h\cdot\overline{\varphi} \ \kappa_h \ dx &= \int_{\mathbb{R}^3}(\chi_{\Omega^{y_h}}m_h) \circ \check{T}_h^{-1}\circ \Theta \circ z_h \cdot \overline{\varphi} \ \kappa_h \ dx\\
            &= \frac{1}{h}\int_{\mathbb{R}^3}\chi_{\Omega^{y_h}}m_h\cdot \overline{\varphi} \circ (\Theta_h \circ z_h)^{-1} \circ \check{T}_h \ d\xi\\
            &= \int_{\mathbb{R}^3} \mu_h \cdot \overline{\varphi} \circ (\Theta_h \circ z_h)^{-1} \circ \check{T}_h \circ \Theta_h \circ z_h \ dx.
        \end{aligned}
        \label{7.32}
    \end{equation}
    Notice that
    \begin{equation}
        \begin{aligned}
            (\Theta_h \circ z_h)^{-1} \circ \check{T}_h \circ \Theta_h \circ z_h (x) &= (\Theta_h \circ z_h)^{-1} \circ \check{T}_h(\xi)\\
            &= (\Theta_h \circ z_h)^{-1} (\xi - \check{c}_h)\\
            &= x - (\Theta_h \circ z_h)^{-1}(\check{c}_h).
        \end{aligned}
        \label{7.33}
    \end{equation}
    By property \eqref{7.31}, the fact that $(\Theta_h \circ z_h)^{-1} \to Id$ uniformly and \eqref{7.33}, we deduce 
    \begin{equation*}
        (\Theta_h \circ z_h)^{-1} \circ \check{T}_h \circ \Theta_h \circ z_h (x) \to x,
    \end{equation*}
    for every $x \in \Omega$. Therefore, by the Dominated Convergence Theorem, we obtain
    \begin{equation*}
        \overline{\varphi} \circ (\Theta_h \circ z_h)^{-1} \circ \check{T}_h \circ \Theta_h \circ z_h \to \overline{\varphi} \ \text{in} \ L^2(\mathbb{R}^3;\mathbb{R}^3).
    \end{equation*}
By Theorem \ref{Thm 5.2}, we obtain 
\begin{equation*}
    \mu_h := \mathcal{M}_h(y_h,m_h) \to \chi_\Omega \lambda \ \text{in} \ L^r(\mathbb{R}^3;\mathbb{R}^3) \ \text{for every} \ 1 \leq r < \infty,
\end{equation*}
for some $\lambda \in W^{1,2}(\omega;\mathbb{S}^2)$. Furthermore, there holds $\mu = \chi_\Omega\lambda$.

All combined, together with \eqref{4.42}, by passing to the limit $h \to 0^+$ in \eqref{7.32}, we obtain
\begin{equation*}
    \int_\omega\check{\lambda}\cdot\varphi \ dx' = \int_\Omega \check{\lambda} \cdot \overline{\varphi} \ dx = \int_\Omega\lambda \cdot \overline{\varphi} \ dx = \int_\omega \lambda \cdot \varphi \ dx'.
\end{equation*}

As $\varphi$ is arbitrary, we conclude that $\check{\lambda} = \lambda$.

The rest of the proof from \cite[Corollary 3.1]{BrescianiIncompressible} requires no adaptation and is therefore omitted.
\end{proof}
\section*{Acknowledgements} 

This research has been supported by the Austrian Science Fund (FWF) through grants \href{https://doi.org/10.55776/Y1292}{10.55776/Y1292} and \href{https://doi.org/10.55776/F100800}{10.55776/F100800}. For open access purposes, the authors have applied a CCBY public copyright license to any accepted manuscript version arising from this submission.

\bibliographystyle{siam}
\bibliography{main}

\end{document}